\newtheorem{theorem}{Theorem}[section]
\newtheorem{lemma}[theorem]{Lemma}
\newtheorem{corollary}[theorem]{Corollary}
\newtheorem{proposition}[theorem]{Proposition}
\newtheorem{remark}[theorem]{Remark}
\newtheorem{notation}[theorem]{Notation}
\newtheorem{example}[theorem]{Example}
\newcommand{\cJ}{\mathcal{J}}   
  \newcommand{\cO}{\mathcal{O}}
\newcommand{\fa}{\mathfrak{a}} \newcommand{\fb}{\mathfrak{b}} 
  \newcommand{\fm}{\mathfrak{m}}
 \newcommand{\fp}{\mathfrak{p}}
 \newcommand{\bbN}{\mathbb{N}}
\newcommand{\ga}{\alpha} \newcommand{\gb}{\beta}
\newcommand{\lrarrow}{\longrightarrow}
\DeclareMathOperator{\lct}{lct}
  \DeclareMathOperator{\Spec}{Spec}
\title{Jumping numbers and ordered tree structures on the dual graph}
\author{Eero Hyry}
\address{Mathematics and Statistics\\
School of Information Sciences\\ 
University of Tampere\\
FIN-33014 Tampereen yliopisto\\ 
Finland}
\email{eero.hyry@uta.fi}
\author{Tarmo J\"arvilehto}
\address{P\"a\"askykuja 5, FIN-04620 M\"ants\"al\"a\\Finland}
\email{tarmo.jarvilehto@helsinki.fi}
\date{}
\dedicatory{Dedicated to the memory of Professor Olli Jussila}
\begin{document}

\begin{abstract}
Let $R$ be a two-dimensional regular local ring having an algebraically closed residue field and let $\fa$ 
be a complete ideal of finite colength in $R$.  In this article we investigate the jumping numbers of $\fa$ by means of
the dual graph of the minimal log resolution of the pair $(X,\fa)$. Our main result is a combinatorial criterium for a positive 
rational number $\xi$ to be a jumping number. In particular, we associate to each jumping number 
certain ordered tree structures on the dual graph.
\end{abstract}

\maketitle

\section{Introduction}

Multiplier ideals have in recent years emerged as an important tool in algebraic geometry. Given a closed
subscheme of a smooth complex variety, there is a nested sequence of multiplier ideals parametrized by the positive
rational numbers. A jumping number is a value of the rational parameter at which the multiplier ideal makes a jump.  
Jumping numbers form a discrete set of invariants, which contains important information about the singularities of 
the subscheme in question. 

Jumping numbers are defined by using an embedded resolution of the subcheme. They depend on the exceptional divisors appearing 
in the resolution. The purpose of this article is to look at jumping numbers from the point of view of the combinatorics of 
exceptional divisors in the case of a smooth surface. In particular, we associate to each jumping number certain ordered tree structures 
on the dual graph of the resolution. These structures generalize the one discovered by Veys in~\cite{V} while he was studying poles of the 
topological zeta function 

To describe our work in more detail, let $\fa$ be a complete ideal of finite colength in a two-dimensional regular local ring $R$ having 
an algebraically closed residue field. Let $X\lrarrow \Spec R$ be a minimal log resolution of the pair $(R,\fa)$. Let $D$ be the divisor on $X$ such 
that $\cO_X(-D)=\fa \cO_X$. Let $E_1,\ldots,E_N$ be the exceptional divisors. Recall that a divisor $F=f_1E_1+\ldots +f_NE_N$ is called antinef if  
$F \cdot E_\gamma \le 0$ for all $\gamma=1,\ldots,N$, where $F \cdot E_\gamma$ denotes the intersection product. Our starting point is the observation
made in~\cite{J} that jumping numbers of $\fa$ can be parametrized by the
antinef divisors. More precisely, the jumping number corresponding to $F$ is $$\xi_F:=\min_\gamma \frac{f_\gamma+k_\gamma+1}{d_\gamma},$$ where 
$D=d_1E_1+\ldots+ d_NE_N$ and $K=k_1E_1+\ldots+ k_NE_N$ is the canonical divisor.

Let $\Gamma$ denote the dual graph of $X$. Recall that the vertices of the dual graph correspond to the exceptional divisors and that two vertices are 
adjacent if and only if the corresponding exceptional divisors intersect. The above considerations motivate us to investigate the function $\lambda(f_\gamma,\gamma)$ on $|\Gamma|$,  where $$\lambda(a,\gamma):=\frac{a+k_\gamma+1}{d_\gamma}$$ for any $a\in \mathbb Z$ and $\gamma \in |\Gamma|$. By the above the minimum value of $\lambda(f_\gamma,\gamma)$ is now the jumping number $\xi=\xi_F$. We call the set $$S_F:=\{\gamma \in |\Gamma| \mid \lambda(f_\gamma,\gamma)=\xi\}$$ the support of $\xi$ with respect to the antinef divisor $F$. 

Our main result Theorem~\ref{hyppylukukriteeri} is a combinatorial criterium for a positive rational number $\xi$ to be a jumping number. We observe that it is possible to choose the divisor $F$ in such a way that $\lambda(f_\gamma,\gamma)$ strictly increases along every path away from the support. 
By assigning the number $\lambda(f_\gamma,\gamma)$ to each vertex $\gamma$, we can  make the dual graph an ordered tree.
An end of $S_F$ must either have at least three adjacent vertices or correspond to some Rees valuation of the ideal. Moreover, one may assume that $S_F$ is a chain such that the vertices corresponding to Rees valuations do not occur at the non-ends of $S_F$.  

We also want to understand how the so called contributing divisors arise. The notion of contribution to a jumping number by a divisor 
was introduced by Smith and Thompson in~\cite{ST}, and the investigation has then been continued by Tucker in~\cite{T}. It turns out in Theorem~\ref{kriittinenlause} that every critically contributing divisor, in the sense of Tucker, is of the type $\sum_{\gamma\in S_F}E_\gamma$. However, the converse is not true. Therefore we give in Theorem~\ref{kriittinenkriteeri} a necessary and sufficient condition for a reduced divisor to be a critically contributing one. 

Using Theorem~\ref{hyppylukukriteeri} we will also show in Corollary~\ref{hyppyluvut} that given a vertex with at least three adjacent vertices or a vertex corresponding to a Rees valuation of the ideal, there is always a jumping number supported exactly at this vertex. Note that a support of a jumping number always contains vertices of this type. Moreover, by means of Theorem~\ref{hyppylukukriteeri} we can in Proposition~\ref{uudet} construct from a given jumping number certain new jumping numbers having the same support as the original one.

Our main technical tool is Lemma~\ref{veys} which helps us to construct suitable antinef divisors $F$. This is inspired by the work of Loeser and Veys concerning the numerical
data associated to the exceptional divisors of the resolution (see~\cite{Lo} and~\cite{V}). 

Finally, we would like to refer to the book of Favre and Jonsson (\cite{FJ}) for related topics. It would be interesting to know whether our results can be interpreted in their `tree language'.

\section{Preliminaries}

We begin by fixing notation and recalling some basic facts from the Zariski-Lipman theory of complete ideals. For more details, we refer to~\cite{L},
\cite{C}, \cite{LW} and~\cite{J}. 

Throughout this article $R$ denotes a regular local ring of dimension two having an algebraically closed residue field. Let
$\fa$ be a complete ideal of finite colength in $R$. Let $\pi \colon X\rightarrow\Spec(R)$ be a minimal principalization of $\fa$.
Then $X$ is a regular scheme and $\fa\cO_X=\cO_X(-D)$ for an effective Cartier divisor $D$. Note that $\pi$ is a log resolution
of the pair $(\Spec R,\fa)$, i.e., the divisor $D+\textrm{Exc}(\pi)$ has simple normal crossing support, where $\textrm{Exc}(\pi)$ 
denotes the sum of the exceptional divisors of $\pi$.

The morphism $\pi$ is a composition of point blowups of regular schemes 
\begin{equation*}
\pi:X=X_{N+1}\xrightarrow{\pi_N}\cdots\xrightarrow{\pi_2} X_2\xrightarrow{\pi_1}
X_1=\Spec R,
\end{equation*} 
where $\pi_\mu: X_{\mu+1}\rightarrow X_\mu$ is the blowup of 
$X_\mu$ at a closed point $x_\mu\in X_\mu$ for every $\mu=1,\dots,N$. 
Let $E_\mu$ and $E^*_\mu$, respectively, be the strict and total transform of the exceptional divisor $\pi^{-1}_\mu\{x_\mu\}$ 
on $X$ for every $\mu=1,\dots,N$. We denote by $v_\mu$ the discrete valuation associated to the discrete valuation ring $\cO_{X,E_\mu}$, in other words,
$v_\mu$ is the $\fm_{X_\mu,x_\mu}$-adic order valuation.

Recall that a point $x_\mu$ is said to be infinitely near to a point $x_\nu$, if the projection $X_\mu \rightarrow X_\nu$ maps $x_\mu$
to $x_\nu$. This relation gives a partial order on the set $\{x_1,\ldots,x_N\}$. A point $x_\mu$ is \textit{proximate} to the point 
$x_\nu$, denoted by $\mu\succ \nu$, if and only if $x_\mu$ lies on the strict 
transform of $\pi_\nu^{-1}\{x_\nu\}$ on $X_\mu.$ Following \cite[Definition-Lemma 1.5]{C}, the \textit{proximity matrix} is
\begin{equation*}\label{prox}
P:=(p_{\mu,\nu})_{N\times N},\text{ where }p_{\mu,\nu}=
\left\lbrace
\begin{array}{rl}
   1,&\text{if }\mu=\nu;\\
  -1,&\text{if }\mu\succ \nu;\\
   0,&\text{otherwise.}
\end{array}
\right.
\end{equation*}
Note that this is the transpose of the proximity matrix given in \cite[p. 6]{L}. We set $Q=(q_{\mu,\nu})_{N\times N}:=P^{-1}$. 
The equation $PQ=1$ immediately gives the formula
\begin{equation}\label{Q}q_{\mu,\nu}=\sum_{\mu\succ \rho}q_{\rho,\nu}+\delta_{\mu,\nu}.\end{equation}
If $x_\mu$ is infinitely near to $x_\nu$, then $q_{\mu,\nu}>0$ while $q_{\mu,\nu}=0$ otherwise. Clearly $q_{\mu,\mu}=1$ for all $\mu=1,\ldots,N$. 

We denote by $\Gamma$ the \textit{dual graph} associated to our principalization. It is well known that $\Gamma$ is a tree. Let $|\Gamma|$ be the corresponding set of vertices. Recall that there is a vertex $\nu$ corresponding to each exceptional divisor $E_\nu$ weighted by the number $w_\Gamma(\nu):=-E_\nu^2$. Note that
$$w_\Gamma(\nu)=1+\#\{\mu\in |\Gamma|\mid \mu \succ \nu\}.$$ Two vertices $\mu$ and $\nu$ are called \textit{adjacent} if they can be joined by an edge. This is the case if and only if the corresponding exceptional divisors  $E_\mu$ and $E_\nu$ intersect. We write $\mu\sim\nu$. Then either $\mu \prec \nu$ or $\mu \succ \nu$. Suppose, for example, that $\mu \succ  \nu$. Then 
$\nu\sim\mu$ in fact means that $x_\mu$ is a maximal element in the set of infinitely near points proximate to $x_\nu$. 
The \textit{valence} $v_\Gamma(\nu)$ of a vertex $\nu$ means the number of vertices adjacent to it. 
If $v_\Gamma(\nu)\ge 2$, then $\nu$ is called a \textit{star}. A vertex $\tau$ with $v_\Gamma(\tau)=1$ is an \textit{end}.
The \textit{distance} between two vertices $\mu, \nu\in |\Gamma|$ is defined as
$$
d(\nu,\mu):=\min\{r\mid\nu=\nu_0\sim\cdots\sim\nu_r=\mu,\text{ where }
\nu_0,\dots,\nu_r\in|\Gamma|\}.
$$
Furthermore, if $S\subset |\Gamma|$, we set 
$$d(\nu,S):=\min\{d(\nu,\mu)\mid\mu\in S\}.$$
If $d(\nu,S)=1$, then we write $\nu\sim S$.

We consider the lattice $\Lambda:= \mathbb Z E_1 \oplus  \ldots \oplus \mathbb Z E_N$ of exceptional divisors on $X$.  
The lattice $\Lambda$ 
has two other convenient bases 
besides $\{E_\mu\mid \mu=1,\dots,N\}$, namely $\{E_\mu^*\mid \mu=1,\dots,N\}$ and $\{\widehat E_\mu\mid \mu=1,\dots,N\}$, where  
$E_\mu\cdot\widehat E_\nu=-\delta_{\mu,\nu}$ for $\mu,\nu=1,\dots,N$. For any $G\in\Lambda$, we write
$$G=g_1E_1+\ldots+g_NE_N=g_1^*E^*_1+\ldots+g_N^*E^*_N=\widehat g_1\widehat E_1+\ldots+\widehat g_N\widehat E_N.$$
The following base change formulas now hold: \begin{equation}\label{BC}
g^*=gP^\textsc{t}\text{ and }\widehat g=gP^\textsc{t}P=g^*P,  
\end{equation}
where $g,g^*$ and $\widehat g$ denote row vectors in $\mathbb Z^n$. Here $(P^\textsc{t}P)_{\mu,\nu}=-
E_\mu\cdot E_\nu$. In particular, note the formulas
\begin{equation}\label{P}
\widehat g_\mu=g^*_\mu - \sum_{\nu\succ \mu}g^*_\nu=w_\Gamma(\gamma)g_\mu - \sum_{\nu \sim \mu}g_\nu\quad (\mu=1,\ldots,N).
\end{equation}
The \textit{support} of a divisor $G\in\Lambda$ is $|G|:=\{\gamma\in |\Gamma|\mid g_\gamma\not=0\}$.

Recall that a divisor $F\in \Lambda$ is \textit{antinef} if $\widehat f_\nu=-F \cdot E_\nu \ge 0$ for all $\nu=1,\ldots,N$. Equivalently, the \textit{proximity inequalities}
\begin{equation}\label{Ptahti}f^*_\mu \ge \sum_{\nu\succ \mu}f^*_\nu\quad (\mu=1,\ldots,N)\end{equation}
hold. Note that they can also be expressed in the form
\begin{equation}\label{Pvalu}w_\Gamma(\mu)f_\mu \ge \sum_{\nu \sim \mu}f_\nu\quad (\mu=1,\ldots,N).\end{equation}
In fact, if $F\not=0$ is antinef, then also $f_\nu>0$ for all $\nu=1,\ldots,N$. There is a one to one correspondence between the antinef divisors in $\Lambda$ and the complete ideals of finite colength in $R$ generating invertible $\cO_X$-sheaves, given by $F\leftrightarrow\Gamma(X,\cO_X(-F))$. For a divisor $G\in\Lambda$, there exists a minimal one among the antinef divisors $F$ satisfying $F\ge G$. This is called the \textit{antinef closure} of $G$ and denoted by $G^\sim$. We have \begin{equation*}\Gamma(X,\cO_X(-G))=\Gamma(X,\cO_X(-G^\sim))
\end{equation*} for any divisor $G\in\Lambda$.

Recall that an ideal is called \textit{simple} if it cannot be expressed as a product of two proper ideals. By the famous result of Zariski,
every complete ideal factorizes uniquely into a product of simple complete ideals. More precisely, $$\fa=\fp_1^{\widehat d_1}\cdots\fp_N^{\widehat d_N},$$ where $\fp_\mu\subset R$ denotes the simple complete ideal of finite colength corresponding to the exceptional divisor $E_\mu$ and $\widehat d_\mu>0$ if and only if $v_\mu$ is a Rees valuation of $\fa$. We have $\fp_\mu\cO_X=\cO_X(-\widehat E_\mu)$ so that $\fp_\mu=\Gamma(X,\cO_X(-\widehat E_\mu))$. 
By~(\ref{BC})
\begin{equation}\label{E}\widehat E_\mu=\sum_\nu q_{\mu,\nu}E^*_\nu=\sum_{\nu,\rho}q_{\nu,\rho}q_{\mu,\rho}E_\nu.\end{equation} 
In particular, we observe the \textit{reciprocity formula}
\begin{equation}\label{reciprocity}v_\nu(\fp_\mu)=\sum_{\rho} q_{\nu,\rho}q_{\mu,\rho}=v_\mu(\fp_\nu)\quad (\mu,\nu=1,\ldots,N).\end{equation} 
For $\mu\not=\nu$, the proximity inequalities now become equalities $$q_{\mu,\nu} = \sum_{\rho \succ \nu}q_{\mu,\rho}.$$

We will next recall the definition of jumping numbers. A general reference for jumping numbers
is the fundamental article~\cite{ELSV}. Recall first that the \textit{canonical divisor} is $K=\sum_\nu E^*_\nu$. The formulas (\ref{BC}) now give 
\begin{equation}\label{K}k_\nu=\sum_\mu q_{\nu,\mu}\quad\hbox{and}\quad \widehat k_\nu=E_\nu^2+2\quad (\nu=1,\ldots,N).\end{equation} 
By (\ref{P}) 
we then obtain an important relation 
\begin{equation}\label{KP}
w_\Gamma(\nu)k_\nu=2-w_\Gamma(\nu)+\sum_{\mu\sim \gamma}k_\mu\quad(\nu=1,\ldots,N).
\end{equation}

For a nonnegative rational number $\xi$, the \textit{multiplier ideal} $\cJ(\fa^\xi)$ is defined to be the ideal 
\[
\cJ(\fa^\xi):=\Gamma\left(X,\cO_X\left(K-\left\lfloor \xi D\right\rfloor\right)\right)\subset R,
\]
where $\left\lfloor \xi D\right\rfloor$ denotes the integer part of $\xi D$. 
It is now known that there is an increasing discrete sequence 
$$0=\xi_0<\xi_1<\xi_2<\cdots$$ of rational numbers 
$\xi_i$ characterized by the properties that $\cJ(\fa^\xi)=\cJ(\fa^{\xi_i})$ for $\xi\in[\xi_i,\xi_{i+1})$, while 
$\cJ(\fa^{\xi_{i+1}})\subsetneq\cJ(\fa^{\xi_i})$ for every $i$. The numbers $\xi_1,\xi_2,\dots$, are called the \textit{jumping 
numbers} of $\fa$. Note that contrary to \cite[Definition 1.4]{ELSV}, we don't consider $0$ as a jumping number. Clearly, 
this is no restriction. The following Proposition~\ref{parametrisointi}, which is fundamental for the rest of this article, 
results from~\cite[Proposition 6.7 and Proposition 7.2]{J}.

\begin{proposition}\label{parametrisointi}
Let $R$ be a two-dimensional regular local ring and let $\fa \subset R$ be a complete ideal of finite 
colength. Then $\xi$ is a jumping number of $\fa$ if and only if there exists an antinef divisor $F\in \Lambda$
such that $$\xi=\xi_F:=\min_\nu \frac{f_\nu+k_\nu+1}{d_\nu}.$$ Moreover, if $\fb$ is the complete ideal corresponding to
$F$, then $$\xi=\inf\{c\in\mathbb Q_{>0}\mid\cJ(\fa^{c})\nsupseteq\fb\}.$$
\end{proposition}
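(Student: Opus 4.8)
The plan is to carry out the whole argument on the resolution $X$, replacing the multiplier ideal by the global sections of a line bundle attached to a divisor in $\Lambda$ and then exploiting the order-reversing bijection between antinef divisors and complete ideals of finite colength that generate invertible $\cO_X$-sheaves. Writing $G_c:=\lfloor cD\rfloor-K\in\Lambda$, so that the coefficient of $E_\nu$ in $G_c$ is $\lfloor cd_\nu\rfloor-k_\nu$, we have by definition $\cJ(\fa^c)=\Gamma(X,\cO_X(-G_c))$, hence, since passing to the antinef closure does not change global sections, $\cJ(\fa^c)=\Gamma(X,\cO_X(-G_c^\sim))$. In particular each $\cJ(\fa^c)$ is a complete ideal of finite colength generating the invertible sheaf $\cO_X(-G_c^\sim)$, so it corresponds under the bijection to the antinef divisor $G_c^\sim$.

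The core of the proof is the following chain of equivalences, valid for an arbitrary antinef divisor $F$, its associated ideal $\fb:=\Gamma(X,\cO_X(-F))$, and any $c\in\mathbb Q_{>0}$:
\[
\cJ(\fa^c)\supseteq\fb\iff G_c^\sim\le F\iff G_c\le F\iff \lfloor cd_\nu\rfloor\le f_\nu+k_\nu\ \text{for all }\nu\iff c<\xi_F.
\]
Here the first equivalence is the order-reversing bijection (using $v_\nu(\fb)=f_\nu$), the second is the minimality of $G_c^\sim$ among antinef divisors $\ge G_c$, the third is the definition of $G_c$, and the last uses $\lfloor x\rfloor\le m\iff x<m+1$ for $m\in\mathbb Z$ together with $\xi_F=\min_\nu(f_\nu+k_\nu+1)/d_\nu$.

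Granting this, the \emph{if} direction and the displayed formula for $\xi$ drop out at once: the set $\{c\in\mathbb Q_{>0}\mid\cJ(\fa^c)\not\supseteq\fb\}$ equals $[\xi_F,\infty)\cap\mathbb Q_{>0}$, whose infimum is $\xi_F$; and since $\cJ(\fa^c)\supseteq\fb$ for $c<\xi_F$ while $\cJ(\fa^{\xi_F})\not\supseteq\fb$, the inclusion $\cJ(\fa^{\xi_F})\subseteq\cJ(\fa^c)$ coming from monotonicity of multiplier ideals is strict for every $c<\xi_F$, so the multiplier ideal really drops at $\xi_F$ and $\xi_F$ is a jumping number. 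For the \emph{only if} direction I would start from a jumping number $\xi$, choose $\delta>0$ so small that $(\xi-\delta,\xi)$ contains no jumping number, fix a rational $\xi'\in(\xi-\delta,\xi)$, and put $F:=G_{\xi'}^\sim$, so that $\fb:=\Gamma(X,\cO_X(-F))=\cJ(\fa^{\xi'})$. Feeding this $F$ into the chain of equivalences, taking $c=\xi'$ gives $\xi'<\xi_F$, while taking $c=\xi$ and using $\cJ(\fa^\xi)\subsetneq\cJ(\fa^{\xi'})=\fb$ (valid because $\xi$ is a jumping number and none lies in $(\xi',\xi)$) gives $\xi\ge\xi_F$. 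Thus $\xi_F\in(\xi',\xi]$; since $\xi_F$ is a jumping number by the part already proved and $(\xi',\xi)$ contains none, we conclude $\xi_F=\xi$.

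The only delicate points are bookkeeping. One must check that each $\cJ(\fa^c)$ really is a complete ideal of finite colength generating an invertible $\cO_X$-sheaf, so that it lies inside the bijection; this comes down to $\fa$ being $\fm$-primary together with standard properties of multiplier ideals. One must also verify that the bijection $F\leftrightarrow\Gamma(X,\cO_X(-F))$ is order-reversing, which follows from the identity $v_\nu(\Gamma(X,\cO_X(-F)))=f_\nu$, and be slightly careful with the floor-function edge cases in the last equivalence of the chain. I do not anticipate a genuine obstacle: the entire statement is funnelled through the single identity $\cJ(\fa^c)\supseteq\fb\iff c<\xi_F$, and what remains is the elementary manipulation displayed above.
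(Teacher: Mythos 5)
Your proof is correct. Note that the paper itself does not give a proof of this proposition: it is cited as a consequence of \cite[Propositions 6.7 and 7.2]{J}, so there is no ``paper proof'' to compare against. That said, your argument is exactly the self-contained proof one would expect. The heart of the matter is the chain $\cJ(\fa^c)\supseteq\fb\iff G_c^\sim\le F\iff G_c\le F\iff c<\xi_F$, which correctly combines (a) the fact that $\cJ(\fa^c)=\Gamma(X,\cO_X(-G_c^\sim))$ since taking antinef closures preserves global sections, (b) the order-reversing bijection between antinef divisors and complete ideals generating invertible sheaves, via $v_\nu(\Gamma(X,\cO_X(-F)))=f_\nu$, (c) the minimality of the antinef closure, and (d) the integer floor manipulation using $d_\nu>0$ and $f_\nu+k_\nu\in\bbZ$. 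Your handling of the ``only if'' direction --- taking $\xi'$ rational just below $\xi$ with no jumping number in $(\xi',\xi)$, setting $F:=G_{\xi'}^\sim$, and pinning $\xi_F$ in $(\xi',\xi]$ using both ends of the equivalence --- is clean and correct. The observation that $\cJ(\fa^{\xi_F})\ne\cJ(\fa^c)$ for $c<\xi_F$ because $\fb$ lies in one but not the other, combined with the monotone nesting of multiplier ideals, properly establishes that $\xi_F$ is a genuine jump.
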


\begin{notation}
We write for any integer $a$ and for any vertex $\nu$ 
$$\lambda(a,\nu):=\frac{a+k_\nu+1}{d_\nu}$$ and call
the set $$S_F:=\{\nu \in |\Gamma|\mid \lambda(f_\nu,\nu)=\xi\}$$ 
the \textit{support} of the jumping number $\xi$ with respect to the
antinef divisor $F$.
\end{notation}

\section{Relations between numerical data associated to exceptional divisors}

Let $\xi$ be a jumping number of the ideal $\fa$. In order to define an ordered tree structure on the dual graph as described in the introduction, 
we must be able to construct an antinef divisor $F$ such that $\xi=\xi_F$ and that $\lambda(f_\gamma,\gamma)$ increases along every path away from the support $S_F$. Proceeding inductively, suppose that we are given a vertex $\gamma$ and a vertex $\eta_1\sim \gamma$ such that $d(\gamma,S_F)>d(\eta_1,S_F)$. Suppose, furthermore, that numbers $f_\gamma$ and $f_{\eta_1}$ have been defined in such a way that $\lambda(f_\gamma,\gamma)>\lambda(f_{\eta_1},\eta_1)$. 
The key issue is to find for vertices $\eta_1\not=\eta \sim \gamma$ suitable numbers $f_\eta$ with the property that $\lambda(f_\eta,\eta)
>\lambda(f_\gamma,\gamma)$. This problem will be addressed in Lemma~\ref{veyslemma}, which is the main result of this section. Other details of
the above construction will be postponed till Lemma~\ref{vieritys} in the next section. 

We begin with the following lemma:

\begin{lemma}\label{ketju} Let $F\in\Lambda$ be a divisor. Let $\gamma\in|\Gamma|$ be a vertex such that $\widehat 
f_\gamma\ge0$ and $\xi:=\lambda(f_\gamma,\gamma)\le\lambda(f_\eta,\eta)$ when $\eta\sim\gamma$.
Then 
$$\xi \widehat d_\gamma \ge {\widehat f}_\gamma-v_\Gamma(\gamma)+2 .$$
In particular, this implies the following:
\begin{itemize}
\item[a)]
If $v_\Gamma(\gamma)\le 2$ and $\widehat d_\gamma=0$, then there are exactly two vertices $\eta$ adjacent to $\gamma$ and
$\lambda(f_{\eta}, \eta)=\xi$ for both of those. Furthermore $\widehat f_\gamma=0$.

\item[b)]If $v_\Gamma(\gamma)=1$, then $\xi \widehat d_\gamma \ge \widehat f_\gamma+1$. Especially, $\widehat d_\gamma>0$.
\end{itemize}

\end{lemma}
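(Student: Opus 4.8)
The plan is to compute $\xi \widehat d_\gamma$ by comparing the valuation-theoretic data at $\gamma$ with that at its neighbours, using formulas~(\ref{P}) and~(\ref{KP}). First I would unwind the definitions: since $\xi = \lambda(f_\gamma,\gamma) = (f_\gamma + k_\gamma + 1)/d_\gamma$, we have $\xi d_\gamma = f_\gamma + k_\gamma + 1$, and the hypothesis $\xi \le \lambda(f_\eta,\eta)$ for $\eta \sim \gamma$ gives $\xi d_\eta \le f_\eta + k_\eta + 1$, i.e. $f_\eta \ge \xi d_\eta - k_\eta - 1$. The goal quantity is $\xi \widehat d_\gamma = \xi(w_\Gamma(\gamma) d_\gamma - \sum_{\eta \sim \gamma} d_\eta)$ by~(\ref{P}) applied to $D$.

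The key step is then the substitution: using $\xi d_\gamma = f_\gamma + k_\gamma + 1$ and the inequalities $\xi d_\eta \le f_\eta + k_\eta + 1$ (so that $-\xi d_\eta \ge -(f_\eta + k_\eta + 1)$), I obtain
\[
\xi \widehat d_\gamma = w_\Gamma(\gamma)(f_\gamma + k_\gamma + 1) - \sum_{\eta \sim \gamma}\xi d_\eta \ge w_\Gamma(\gamma)(f_\gamma + k_\gamma + 1) - \sum_{\eta \sim \gamma}(f_\eta + k_\eta + 1).
\]
Now I group the terms. The $f$-terms give $w_\Gamma(\gamma) f_\gamma - \sum_{\eta \sim \gamma} f_\eta = \widehat f_\gamma$ by~(\ref{P}) applied to $F$. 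The $k$-terms give $w_\Gamma(\gamma) k_\gamma - \sum_{\eta \sim \gamma} k_\eta = 2 - w_\Gamma(\gamma)$ by~(\ref{KP}). The constant terms give $w_\Gamma(\gamma) - v_\Gamma(\gamma)$, since there are exactly $v_\Gamma(\gamma)$ neighbours. Adding these, $w_\Gamma(\gamma)$ cancels and we are left with $\xi \widehat d_\gamma \ge \widehat f_\gamma + 2 - v_\Gamma(\gamma)$, which is the claimed inequality. I would double-check the index set in~(\ref{KP}) — the displayed formula has a typographical mismatch between $\nu$ and $\gamma$, but the intended content is $w_\Gamma(\nu)k_\nu = 2 - w_\Gamma(\nu) + \sum_{\mu \sim \nu} k_\mu$, which is what the argument uses.

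For part~a): if $v_\Gamma(\gamma) \le 2$ and $\widehat d_\gamma = 0$, the main inequality reads $0 \ge \widehat f_\gamma - v_\Gamma(\gamma) + 2 \ge \widehat f_\gamma$ since $v_\Gamma(\gamma) \le 2$; combined with the hypothesis $\widehat f_\gamma \ge 0$ this forces $\widehat f_\gamma = 0$, and then equality throughout forces $v_\Gamma(\gamma) = 2$ and $\xi d_\eta = f_\eta + k_\eta + 1$ for each of the two neighbours, i.e. $\lambda(f_\eta,\eta) = \xi$. Here I should note that a vertex always has at least one neighbour when $N \ge 2$, and the $N = 1$ case (where $\gamma$ is the unique vertex, $\widehat d_\gamma = \widehat d_1 > 0$) does not arise under the hypothesis $\widehat d_\gamma = 0$. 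For part~b): if $v_\Gamma(\gamma) = 1$, the main inequality gives $\xi \widehat d_\gamma \ge \widehat f_\gamma + 1 \ge 1 > 0$; since $\xi > 0$ and $\widehat d_\gamma \ge 0$ always, this forces $\widehat d_\gamma > 0$. The only real subtlety, and the step I would be most careful with, is the bookkeeping in the grouping of terms and the correct handling of the degenerate small-$N$ configurations; the algebra itself is a direct application of the three relations~(\ref{P}), (\ref{P}) again, and~(\ref{KP}).
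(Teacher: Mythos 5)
Your proposal is correct and uses essentially the same argument as the paper: both rely on the identities~(\ref{P}) for $F$ and $D$ and~(\ref{KP}) for $K$ together with the hypothesis $\xi d_\eta \le f_\eta + k_\eta + 1$, differing only in presentation (you expand $\xi\widehat d_\gamma$ directly, whereas the paper rewrites $\xi$ as the ratio $\bigl(\sum_{\eta\sim\gamma}(f_\eta+k_\eta+1)+\widehat f_\gamma-v_\Gamma(\gamma)+2\bigr)/\bigl(\sum_{\eta\sim\gamma}d_\eta+\widehat d_\gamma\bigr)$ before comparing). The handling of parts~a) and~b), including the forced equalities from the sum and the observation that the typo in~(\ref{KP}) should read $\sum_{\mu\sim\nu}$, also matches the paper's intent.
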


\begin{proof} 

By the formulas $(\ref{P})$ and $(\ref{KP})$ we obtain
$$
\xi=\frac{w_\Gamma(\gamma)(f_\gamma+k_\gamma+1)}
{w_\Gamma(\gamma)d_\gamma}
=\frac{\sum_{\eta\sim\gamma}(f_\eta+k_\eta+1)+{\widehat f}_\gamma-v_\Gamma(\gamma)+2}
{\sum_{\eta\sim\gamma}d_\eta+{\widehat d}_\gamma}.
$$
Since
$$
\xi\le\frac{f_\eta+k_\eta+1}{d_\eta},
$$
we must have 
$ \xi \widehat d_\gamma \ge {\widehat f}_\gamma-v_\Gamma(\gamma)+2$.

When $v_\Gamma(\gamma) \le 1$, this immediately gives $\widehat d_\gamma>0$. Then suppose that
$\widehat d_\gamma=0$ and $v_\Gamma(\gamma)= 2$. Now ${\widehat f}_\gamma-v_\Gamma(\gamma)+2\le 0$ 
is possible only if $$\widehat f_\gamma=v_\Gamma(\gamma)-2=0.$$ But then
$$
\xi
=\frac{\sum_{\eta\sim\gamma}(f_\eta+k_\eta+1)}
{\sum_{\eta\sim\gamma}d_\eta},
$$
which implies that $\xi=\lambda(f_{\eta},\eta)$ for both $\eta\sim\gamma$.
\end{proof}

For any two vertices $\nu,\gamma\in|\Gamma|$, set
$$\ga_{\gamma,\nu}:=k_{\nu}+1-\frac{k_\gamma+1}{d_{\gamma}}d_{\nu}
=d_\nu(\lambda(0,\nu)-\lambda(0,\gamma)).
$$
The numbers $\ga_{\gamma,\nu}$ were first investigated by Loeser in~\cite{Lo} in the case of an embedded resolution
of a curve. Van Proeyen and Veys generalized his results to the ideal case in~\cite{PV}. The following 
Lemma~\ref{veysequation} and Lemma~\ref{proyenveys} are due to them (\cite[Proposition 3.1 and Corollary 3.2]{PV}).
Since we have slightly modified the statements, we include the proofs here for the convenience of the reader. 
Moreover, working in a more algebraic context, we also prefer to prove these results directly without utilizing the 
results of Loeser.

\begin{lemma}\label{veysequation} If $\gamma\in |\Gamma|$, then 
$$\sum_{\nu\sim\gamma}\ga_{\gamma,\nu}=v_\Gamma(\gamma)-2+\widehat d_\gamma\frac{k_\gamma+1}{d_{\gamma}}.$$
\end{lemma}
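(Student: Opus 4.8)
The plan is to compute the left-hand side directly from the definition $\ga_{\gamma,\nu}=k_\nu+1-\tfrac{k_\gamma+1}{d_\gamma}d_\nu$ and reduce everything to the two sums $\sum_{\nu\sim\gamma}(k_\nu+1)$ and $\sum_{\nu\sim\gamma}d_\nu$. First I would write
$$\sum_{\nu\sim\gamma}\ga_{\gamma,\nu}=\sum_{\nu\sim\gamma}(k_\nu+1)-\frac{k_\gamma+1}{d_\gamma}\sum_{\nu\sim\gamma}d_\nu,$$
so the whole computation splits into evaluating these two pieces in terms of the weighted/valence data at $\gamma$.

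For the first sum I would use the relation~(\ref{KP}) applied at $\gamma$, namely $w_\Gamma(\gamma)k_\gamma=2-w_\Gamma(\gamma)+\sum_{\nu\sim\gamma}k_\nu$, which rearranges to $\sum_{\nu\sim\gamma}k_\nu=w_\Gamma(\gamma)(k_\gamma+1)-2$; adding the $v_\Gamma(\gamma)$ copies of $1$ gives
$$\sum_{\nu\sim\gamma}(k_\nu+1)=w_\Gamma(\gamma)(k_\gamma+1)-2+v_\Gamma(\gamma).$$
(Equivalently, one gets this from the second formula in~(\ref{P}) applied to $K$ together with $\widehat k_\gamma=E_\gamma^2+2=2-w_\Gamma(\gamma)$ from~(\ref{K}).) For the second sum I would apply the second formula in~(\ref{P}) to the divisor $D$ itself, which reads $\widehat d_\gamma=w_\Gamma(\gamma)d_\gamma-\sum_{\nu\sim\gamma}d_\nu$, hence $\sum_{\nu\sim\gamma}d_\nu=w_\Gamma(\gamma)d_\gamma-\widehat d_\gamma$.

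Substituting both expressions into the displayed identity,
$$\sum_{\nu\sim\gamma}\ga_{\gamma,\nu}=w_\Gamma(\gamma)(k_\gamma+1)-2+v_\Gamma(\gamma)-\frac{k_\gamma+1}{d_\gamma}\bigl(w_\Gamma(\gamma)d_\gamma-\widehat d_\gamma\bigr),$$
and the two occurrences of $w_\Gamma(\gamma)(k_\gamma+1)$ cancel, leaving exactly $v_\Gamma(\gamma)-2+\widehat d_\gamma\tfrac{k_\gamma+1}{d_\gamma}$, which is the claimed formula.

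I do not expect any genuine obstacle here: the argument is purely formal manipulation of the base-change formulas, once~(\ref{P}) is invoked for $D$ and~(\ref{KP}) for $K$. The only point deserving a little care is to notice that~(\ref{KP}) already packages the self-intersection relation $\widehat k_\gamma=E_\gamma^2+2$, so no separate adjunction-type step is needed; exploiting it directly is what keeps the computation to a few lines.
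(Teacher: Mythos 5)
Your proof is correct and follows essentially the same route as the paper's: split the sum via the definition of $\ga_{\gamma,\nu}$, evaluate $\sum_{\nu\sim\gamma}k_\nu$ using~(\ref{KP}) and $\sum_{\nu\sim\gamma}d_\nu$ using~(\ref{P}), then substitute and cancel.
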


\begin{proof}
By the relation $(\ref{KP})$ 
\begin{equation*}\label{gaga1}
\sum_{\nu\sim\gamma}k_{\nu}=w_\Gamma(\gamma)k_\gamma+w_\Gamma(\gamma)-2
\end{equation*}
and by the formula $(\ref{P})$
\begin{equation*}\label{gaga2}
\sum_{\nu\sim\gamma}d_{\nu}=w_\Gamma(\gamma)d_\gamma-\widehat d_\gamma.
\end{equation*}
It follows that
\begin{eqnarray*}
\sum_{\nu\sim\gamma}\ga_{\gamma,\nu}
&=&\displaystyle \sum_{\nu\sim\gamma}k_{\nu}+v_\Gamma(\gamma)-
\sum_{\nu\sim\gamma}d_{\nu}\frac{k_\gamma+1}{d_{\gamma}}\\
&=&\displaystyle w_\Gamma(\gamma)\left(k_\gamma+1\right)+
v_\Gamma(\gamma)-2
-\left(w_\Gamma(\gamma)d_\gamma-\widehat d_\gamma\right)\frac{k_\gamma+1}{d_{\gamma}}\phantom{t}\\
&=&\displaystyle v_\Gamma(\gamma)-2+\widehat d_\gamma\frac{k_\gamma+1}{d_{\gamma}}.
\end{eqnarray*}
\end{proof}

\begin{lemma}\label{proyenveys}Let $\gamma\in |\Gamma|$.
\begin{itemize}
\item[a)] For every $\eta \sim\gamma$, either $$|\ga_{\gamma,\eta}|<1,$$
or $\widehat d_\gamma=0$ and $\eta$ is the only vertex adjacent to $\gamma$, in which case $\ga_{\gamma,\eta}=-1.$ 
\item[b)]
For all except at most one $\eta\sim\gamma$,
$\ga_{\gamma,\eta}\ge0$. More precisely, if $\ga_{\gamma,\eta}\le 0$ for some $\eta\sim\gamma$, then $\ga_{\gamma,\nu}>0$ for all $\eta\neq\nu\sim\gamma$, unless $\ga_{\gamma,\eta}= 0$, 
$\widehat d_\gamma=0$ and
there are only two vertices $\eta'\sim\gamma\sim\eta$, in which 
case also $\ga_{\gamma,\eta'}=0$.
\end{itemize}
\end{lemma}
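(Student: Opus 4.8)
\emph{The plan.} I would start by reducing part~a) to a single inequality for a $2\times2$ determinant. Since, by definition, $\ga_{\gamma,\eta}=k_\eta+1-\frac{k_\gamma+1}{d_\gamma}d_\eta$, multiplying by $d_\gamma$ gives $\ga_{\gamma,\eta}\,d_\gamma=\Delta_{\gamma,\eta}$, where
$$\Delta_{\gamma,\eta}:=\det\begin{pmatrix}d_\gamma & k_\gamma+1\\ d_\eta & k_\eta+1\end{pmatrix}=d_\gamma(k_\eta+1)-d_\eta(k_\gamma+1)=-\Delta_{\eta,\gamma}.$$
Thus a) is exactly the assertion that $|\Delta_{\gamma,\eta}|<d_\gamma$, the boundary case —which, once the bound $\ga_{\gamma,\eta}<1$ is established, can only give $\ga_{\gamma,\eta}=-1$— occurring precisely when $\gamma$ is an end with $\widehat d_\gamma=0$. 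One direction here is free: if $v_\Gamma(\gamma)=1$ and $\widehat d_\gamma=0$, then $\eta$ is the unique neighbour of $\gamma$, and Lemma~\ref{veysequation} gives $\ga_{\gamma,\eta}=\sum_{\nu\sim\gamma}\ga_{\gamma,\nu}=v_\Gamma(\gamma)-2+\widehat d_\gamma\lambda(0,\gamma)=-1$. Finally, if $N=1$ then $\fa=\fm^{k}$, the dual graph has no edge, and both parts are vacuous; so I may assume $N\ge2$, whence $d_1=\ord(\fa)\ge2$.

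\emph{Proof of a).} First I would record three elementary facts about the minimal resolution: it is standard that $d^*_\nu\ge1$ for every $\nu$, where $d^*_\nu$ is the coefficient of $E^*_\nu$ in $D$ (the $x_\nu$ are precisely the base points of $\fa$); combining $(\ref{Q})$, $(\ref{K})$ and $(\ref{E})$ with $D=\sum_\nu d^*_\nu E^*_\nu$ yields the recursions $k_\mu=1+\sum_{\mu\succ\rho}k_\rho$ and $d_\mu=d^*_\mu+\sum_{\mu\succ\rho}d_\rho$, the sums running over the at most two points to which $x_\mu$ is proximate; and an immediate induction on the birth order then gives $k_\nu\le2d_\nu-2$, hence $\lambda(0,\nu)<2$, for all $\nu$. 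Now take $\gamma\sim\eta$ with $x_\eta$ the younger of the two, so $x_\eta\succ x_\gamma$ and $x_\eta$ is either proximate to $x_\gamma$ alone (free) or equals $E_\gamma\cap E_\delta$ for a uniquely determined older $\delta$ (satellite). Substituting the recursions into the second row of $\Delta_{\gamma,\eta}$ and using additivity of the determinant in that row gives
$$\Delta_{\gamma,\eta}=d_\gamma-d^*_\eta(k_\gamma+1)\quad(\text{free}),\qquad \Delta_{\gamma,\eta}=\Delta_{\gamma,\delta}-d^*_\eta(k_\gamma+1)\quad(\text{satellite}),$$
from which $\Delta_{\gamma,\eta}<d_\gamma$ follows at once, since $d^*_\eta\ge1$, $k_\gamma+1\ge2$, and $\Delta_{\gamma,\delta}\le d_\gamma$ by induction; so $\ga_{\gamma,\eta}<1$ always. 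The remaining inequalities —the lower bound on $\Delta_{\gamma,\eta}$, and both bounds when $\eta$ is the older of the pair— I would obtain by induction along the resolution, the crucial inputs being $\lambda(0,\nu)<2$; the monotonicity $d^*_\gamma\ge\sum_{\nu\succ\gamma}d^*_\nu\ge d^*_\eta$ (from $(\ref{Ptahti})$ applied to $D$) whenever $x_\eta$ is infinitely near $x_\gamma$; and the identity $d_\gamma=d^*_\gamma+\sum_{\gamma\succ\rho}d_\rho$, which shows that $d_\gamma$ strictly exceeds $d^*_\gamma$ exactly when $\gamma$ has a neighbour besides $\eta$. When $\gamma$ is an end with $\widehat d_\gamma=0$ this extra mass is absent and one is forced back to $\Delta_{\gamma,\eta}=-d_\gamma$, in agreement with the value computed above from Lemma~\ref{veysequation}.

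\emph{Proof of b), using a).} By Lemma~\ref{veysequation}, $\sum_{\nu\sim\gamma}\ga_{\gamma,\nu}=v_\Gamma(\gamma)-2+\widehat d_\gamma\lambda(0,\gamma)$, where $\widehat d_\gamma\lambda(0,\gamma)\ge0$. If $v_\Gamma(\gamma)=1$ the claims are vacuous. If $v_\Gamma(\gamma)\ge3$ and two distinct neighbours $\eta_1,\eta_2$ satisfied $\ga_{\gamma,\eta_i}\le0$, then, since $\gamma$ is not an end, a) gives $\ga_{\gamma,\nu}<1$ for each of the remaining $v_\Gamma(\gamma)-2\ge1$ neighbours, and we would get $\sum_{\nu\sim\gamma}\ga_{\gamma,\nu}<v_\Gamma(\gamma)-2\le\sum_{\nu\sim\gamma}\ga_{\gamma,\nu}$, a contradiction; hence at most one neighbour has $\ga_{\gamma,\eta}\le0$, and if one does all others are $>0$. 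If $v_\Gamma(\gamma)=2$ with neighbours $\eta,\eta'$, then $\ga_{\gamma,\eta}+\ga_{\gamma,\eta'}=\widehat d_\gamma\lambda(0,\gamma)\ge0$, so $\ga_{\gamma,\eta}\le0$ forces $\ga_{\gamma,\eta'}\ge0$, and $\ga_{\gamma,\eta'}=0$ forces $\ga_{\gamma,\eta}=\widehat d_\gamma\lambda(0,\gamma)=0$ and hence $\widehat d_\gamma=0$ — precisely the stated exception.

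\emph{Main obstacle.} Everything except the lower bound in a) is bookkeeping. The difficulty is that $d^*_\eta$ and $k_\gamma$ may each be large, so $d^*_\eta(k_\gamma+1)<2d_\gamma$ is not formal; moreover the recursion $\Delta_{\gamma,\eta}=\Delta_{\gamma,\delta}-d^*_\eta(k_\gamma+1)$ refers to the pair $(\gamma,\delta)$, which ceases to be an edge of $\Gamma$ once $E_\eta$ has been created, so the induction must be run over \emph{all} pairs that are adjacent at some stage of the resolution rather than only over the edges of $\Gamma$. Setting this up correctly — tracking how the multiplicities $d^*_\nu$ decay along the infinitely near order and how they are distributed among the branches meeting at $\gamma$ — is the substantive point, and is where the analysis of Loeser, adapted to the ideal setting by Van Proeyen and Veys, is needed.
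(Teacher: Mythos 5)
Your reduction of b) to a) via Lemma~\ref{veysequation} is correct and matches the paper's argument. The issue is in a), and you name it yourself: the lower bound $\ga_{\gamma,\eta}>-1$ (equivalently $\Delta_{\gamma,\eta}>-d_\gamma$) is never actually proved, and the upper bound in the satellite case invokes ``$\Delta_{\gamma,\delta}\le d_\gamma$ by induction'' for a pair $(\gamma,\delta)$ that is \emph{not} an edge of $\Gamma$, so the induction hypothesis as stated does not apply. Since the only inequality in a) that is free from Lemma~\ref{veysequation} is the end-vertex identity $\ga_{\gamma,\eta}=-1$, and since $|\ga_{\eta,\gamma}|<1$ for the older neighbour also hinges on the missing lower bound (via $\ga_{\eta,\gamma}=-\ga_{\gamma,\eta}\,d_\gamma/d_\eta$ and $d_\eta>d_\gamma$), what you have is an argument for $\ga_{\gamma,\eta}<1$ modulo an unjustified inductive step, plus a correct derivation of b) from a statement you have not established. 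That is a genuine gap, not mere bookkeeping.

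The paper's own proof sidesteps exactly this obstruction by running the induction on $N$ not over subtrees of $\Gamma$ but over a modified \emph{ideal}. They delete the last blowup by passing from $\fa$ to $\fa'=\fp_{\gb}^{\widehat d_N}\fp_{\gb'}^{\widehat d_N}\prod_{\nu\ne N}\fp_\nu^{\widehat d_\nu}$ (in the satellite case), check that $d'_\nu=d_\nu$ and $k'_\nu=k_\nu$ for $\nu\ne N$, and observe that $\gb\sim\gb'$ \emph{is} an edge of the smaller dual graph $\Gamma'$, so the induction hypothesis genuinely applies to $\ga_{\gb,\gb'}$. Lemma~\ref{veysequation}, applied once for $\fa$ and once for $\fa'$, yields the clean identity $\ga_{\gb,N}=\ga_{\gb,\gb'}-\widehat d_N\frac{k_\gb+1}{d_\gb}$, from which $\ga_{\gb,N}<1$ follows; the lower bound $\ga_{\gb,N}\ge-1$ comes from summing $\ga_{\gb,\nu}<1$ over the other neighbours against Lemma~\ref{veysequation}, and the bounds for $\ga_{N,\gb}$ follow from $\ga_{N,\gb}=-\ga_{\gb,N}\,d_\gb/d_N$ with $d_N>d_\gb$. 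Your determinant recursion $\Delta_{\gamma,\eta}=\Delta_{\gamma,\delta}-d^*_\eta(k_\gamma+1)$ is the Loeser-style calculation the paper alludes to but deliberately avoids; to complete your route you would need to prove the two-sided bound $|\Delta_{\mu,\nu}|\le d_\mu$ for all pairs $\mu,\nu$ that are adjacent at some stage of the resolution, by a double induction over the birth order, and that is precisely the machinery the paper replaces with the auxiliary ideal $\fa'$.
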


\begin{proof} 

\smallskip
\noindent
a) Consider the sequence of point blowups
\begin{equation}\label{original}
\pi:X=X_{N+1}\xrightarrow{\pi_N}\cdots\xrightarrow{\pi_2} X_2\xrightarrow{\pi_1}
X_1=\Spec R,
\end{equation} 
where $\pi_\mu: X_{\mu+1}\rightarrow X_\mu$ is the blowup of 
$X_\mu$ at the closed point $x_\mu\in X_\mu$ for every $\mu=1,\dots,N$. 
We shall proceed by induction on $N$, the case $N=1$ being trivial. Consider the exceptional divisor $E_N$ arising in the last blowup. 
Depending on whether $x_N$ lies in an intersection of two exceptional divisor or not, $E_N$ intersects one or two exceptional divisors.  
In other words, the vertex $N$ is adjacent to one or two vertices.

Suppose first that the vertex $N$ is adjacent to only one vertex $\gb$. We consider the ideal 
$$\fa':=\fp_{\gb}^{\widehat d_N}\prod_{\nu\not=N}\fp_\nu^{\widehat d_\nu}.$$ It has the minimal principalization
\begin{equation}
X_N\xrightarrow{\pi_{N-1}}\cdots\xrightarrow{\pi_2} X_2\xrightarrow{\pi_1}
X_1=\Spec R
\end{equation} 
The associated proximity matrix and its inverse are clearly restrictions of those of (\ref{original}).
So $k'_\nu=k_\nu$ for every $\nu\not=N$. Because $N$ is adjacent only to $\gb$ and $E_N^2=-1$, the proximity equation  
for the ideal $\fp_\nu$ gives $v_N(\fp_\nu)=v_\gb(\fp_\nu)$. By the reciprocity formula (\ref{reciprocity}) we then obtain
$$v_\nu(\fp_N)=v_N(\fp_\nu)=v_\gb(\fp_\nu)=v_\nu(\fp_\gb).$$ Hence $$d'_\nu=\widehat d_Nv_\nu(\fp_\gb) + \sum_{\mu\not=N}\widehat d_\mu 
v_\nu(\fp_\mu) =\widehat d_N v_\nu(\fp_N) + \sum_{\mu\not=N}\widehat d_\mu v_\nu(\fp_\mu)=d_\nu$$ for all $\nu\not=N$. 
By the induction hypothesis, the claim therefore holds if $\gamma,\eta \neq N$. It remains to consider the numbers $\ga_{\gb,N}$ and $\ga_{N,\gb}$.

We first observe that $N$ being proximate only to $\gb$, we have $q_{N,\nu}=q_{\gb,\nu}$ for all $\nu\not=N$ by (\ref{Q}). By~(\ref{K}) we thus get
$$k_N=\sum_\nu  q_{N,\nu}=\sum_{\nu\not=N} q_{\gb, \nu}+1=k_\gb+1.$$ Moreover, the base change formula (\ref{BC}) gives 
$$\widehat d_N =\sum_{\nu}d_\nu (-E_\nu \cdot E_N)=d_{N}- d_\gb.$$ 
We then get
$$
\ga_{\gb,N}
=k_\gb+2-\frac{k_\gb+1}{d_\gb}(d_\gb+\widehat d_N)
=1-\widehat d_N \frac{k_\gb+1}{d_\gb}=1-\widehat d_N \frac{k_\gb+1}{d'_\gb}.
$$
Using (\ref{reciprocity}), we have 
$$v_\gb(\fp_\gb)=\sum_\nu q_{\gb,\nu}^2\ge \sum_\nu q_{\gb,\nu}=k_\gb.$$
Thus  
$$d'_\gb=\widehat d_N v_\gb(\fp_\gb)+\sum_{\nu \not=N} \widehat d_\nu v_\gb(\fp_\nu)\ge \widehat d_N k_\gb+\sum_{\nu \not=N} \widehat d_\nu v_\gb(\fp_\nu)$$ so that
$$
0<\widehat d_N \frac{k_\gb+1}{d'_\gb} \le \frac{k_\gb+1}{k_\gb+\sum_{\nu \not=N}(\widehat d_\nu/\widehat d_N) v_\gb(\fp_\nu)}\le 
\frac{k_\gb+1}{k_\gb}\le 2.
$$
Therefore $-1\le \ga_{\gb,N}< 1$. Moreover, by the above the equality $\ga_{\gb,N}=-1$ can take place only if 
$k_\gb=1$ and $\widehat d_\nu=0$ for all $\nu\not=N$. This means that we must have $\gb=1$ and $\fa=\fp_N^{\widehat d_N}$. In 
particular, $N$ is the only vertex adjacent to $\gb$ and $\widehat d_\gb=0$. Thus a) holds for $\ga_{\gb,N}$. 
In order to show that a) holds for $\ga_{N,\gb}$, too, we note that  
$$
\ga_{N,\gb}=-\ga_{\gb,N}\frac{d_\gb}{d_N}=-\ga_{\gb,N} \frac{d_\gb}{d_\gb+\widehat d_N}.
$$
As necessarily $\widehat d_N>0$, we see that $|\ga_{N,\gb}|<1$.

Let us then consider the case where the vertex $N$ is adjacent to two vertices, say $\gb$ and $\gb'$. 
Now let
$$
\fa':=\fp_{\gb}^{\widehat d_N}\fp_{\gb'}^{\widehat d_N}\prod_{\nu\not=N}\fp_\nu^{\widehat d_\nu}.
$$
The ideal $\fa'$ has a similar minimal principalization as in the preceeding case.  Analogously one obtains $d'_\nu=d_\nu$,
$k'_\nu=k_\nu$ for $\nu\not=N$, $d_N=d_{\gb}+d_{\gb'}+\widehat d_N$ and $k_N=k_{\gb}+k_{\gb'}+1$. 
By induction we only need to prove the claim for $\ga_{N,\gb}$, $\ga_{N,\gb'}$ and $\ga_{\gb,N}$, 
$\ga_{\gb',N}$. We consider here only $\ga_{N,\gb}$ and $\ga_{\gb,N}$, the proof for 
$\ga_{N,\gb'}$ and $\ga_{\gb',N}$ being similar.

Set $m:=v_\Gamma(\gb)$. Note that $\gb'\sim_{\Gamma'}\gb$, where $\Gamma'$ denotes the dual graph of the minimal principalization
of $\fa'$. Therefore we have $m=v_{\Gamma'}(\gb)$, too. By Lemma~\ref{veysequation} we get $$\sum_{\nu\sim_\Gamma\gb}\ga_{\gb,\nu}=
m-2+\widehat d_\gb\frac{k_\gb+1}{d_\gb}$$
and 
$$
\sum_{\nu\sim_{\Gamma'}\gb}\ga_{\gb,\nu}=m-2+(\widehat d_\gb+\widehat d_N)\frac{k_\gb+1}{d_\gb}.
$$
Together the above equations imply that
$$
\ga_{\gb,N}=\ga_{\gb,\gb'}-\widehat d_N\frac{k_\gb+1}{d_\gb}.
$$
By the induction hypothesis, $\ga_{\gb,\gb'}<1$. Thereby also $\ga_{\gb,N} <1$. 
We still need to show that $\ga_{\gb,N}\ge -1$. As $a_{\gb,\nu}<1$ for every $N\neq\nu\sim\gb$, we see that 
$$
\sum_{N\neq\nu\sim_\Gamma\gb}\ga_{\gb,\nu}\le m-1,
$$
where the equality holds if and only if $m=1$.
Hence
$$
\ga_{\gb,N}=\sum_{\nu\sim_\Gamma\gb}\ga_{\gb,\nu}-\sum_{N\neq\nu\sim_\Gamma\gb}\ga_{\gb,\nu}\ge-1+\widehat d_\gb\frac{k_\gb+1}{d_\gb}
\ge-1.
$$
We conclude that $|\ga_{\gb,N}|<1$ unless $m=1$ and $\widehat d_\gb=0$, in which case $\ga_{\gb,N}=-1$.
Finally, we have
$$
\ga_{N,\gb}=-\ga_{\gb, N}\frac{d_\gb}{d_N}=-\ga_{\gb,N}\frac{d_\gb}{d_{\gb}+d_{\gb'}+\widehat d_N}.
$$
Because $|\ga_{\gb, N}|\le 1$ and $\widehat d_N>0$, we obtain $|\ga_{N,\gb}|<1$ as wanted.

\medskip

\noindent b) Obviously, there is nothing to prove if $\gamma$ has only one adjacent vertex. Thus we may assume 
that $m:=v_\Gamma(\gamma)\ge 2$. Suppose that there are two vertices adjacent to $\gamma$, say $\eta$ and 
$\eta'$, with $\ga_{\gamma,\eta}, \ga_{\gamma,\eta'} \le 0$. It then follows from Lemma~\ref{veysequation} that
$$
\sum_{\eta,\eta'\neq\nu\sim\gamma}\ga_{\gamma,\nu}
\ge m-2.
$$
On the other hand, $\ga_{\gamma,\nu}<1$ for every $\nu\sim\gamma$ by a). Subsequently, if $m>2$, then
$$
m-2 >\sum_{\eta,\eta'\neq\nu\sim\gamma}\ga_{\gamma,\nu}. 
$$
Hence necessarily $m=2$, and further, by Lemma~\ref{veysequation} we also have $\ga_{\gamma,\eta}=0=\ga_{\gamma,\eta'}$ and 
$\widehat d_\gamma=0$.
\end{proof}

\begin{notation}
Let $\gamma \in |\Gamma|$ and let $a_\gamma$ be a nonnegative integer. Suppose that $$\{\eta\mid\eta\sim\gamma\}=
\{\eta_1,\dots,\eta_m\}.$$ Set $$\Delta_j:=d_{\eta_j}\lambda(a_\gamma,\gamma)-k_{\eta_j}-1\quad (j=1,\ldots,m)$$
so that
$$\lambda(\Delta_j,\eta_j)=\lambda(a_\gamma,\gamma).$$
Also write 
$$\Delta_j=\left\lfloor\Delta_j\right\rfloor+\delta_j$$ where $0\le\delta_j<1$.
\end{notation}

\begin{lemma}\label{veyslemma} With the preceeding notation, we have
\begin{equation*}
\sum_{j=1}^m\Delta_j+\widehat d_\gamma\lambda(a_\gamma,\gamma)+m-2=w_\Gamma(\gamma)a_\gamma.
\end{equation*}
In particular,
$$
\sum_{j=1}^m\left\lfloor\Delta_j\right\rfloor+m-2=w_\Gamma(\gamma)a_\gamma-\varphi,
$$
where 
$$\varphi:=\sum_{j=1}^m\delta_j+\widehat d_\gamma\lambda(a_\gamma,\gamma)$$ is a nonnegative integer. 
\end{lemma}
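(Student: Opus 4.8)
The plan is to derive the identity directly from the definitions by unwinding $\lambda$ and applying the base change formulas $(\ref{P})$ and $(\ref{KP})$, exactly as in the proof of Lemma~\ref{ketju}. First I would write $\Delta_j = d_{\eta_j}\lambda(a_\gamma,\gamma) - k_{\eta_j} - 1$ and sum over $j = 1,\dots,m$, so that
\begin{equation*}
\sum_{j=1}^m \Delta_j = \lambda(a_\gamma,\gamma)\sum_{\eta\sim\gamma} d_\eta - \sum_{\eta\sim\gamma}(k_\eta + 1).
\end{equation*}
Now I would substitute the two identities already used in Lemma~\ref{veysequation}: by $(\ref{P})$ we have $\sum_{\eta\sim\gamma} d_\eta = w_\Gamma(\gamma)d_\gamma - \widehat d_\gamma$, and by $(\ref{KP})$ we have $\sum_{\eta\sim\gamma} k_\eta = w_\Gamma(\gamma)k_\gamma + w_\Gamma(\gamma) - 2$, hence $\sum_{\eta\sim\gamma}(k_\eta+1) = w_\Gamma(\gamma)(k_\gamma+1) + m - 2$ since there are $m = v_\Gamma(\gamma)$ adjacent vertices. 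Plugging these in and using $\lambda(a_\gamma,\gamma) = (a_\gamma + k_\gamma + 1)/d_\gamma$, the term $\lambda(a_\gamma,\gamma)\cdot w_\Gamma(\gamma)d_\gamma$ becomes $w_\Gamma(\gamma)(a_\gamma + k_\gamma + 1)$, which cancels the $w_\Gamma(\gamma)(k_\gamma+1)$ coming from the $k$-sum and leaves $w_\Gamma(\gamma)a_\gamma$. Rearranging gives the first displayed identity $\sum_{j=1}^m \Delta_j + \widehat d_\gamma \lambda(a_\gamma,\gamma) + m - 2 = w_\Gamma(\gamma)a_\gamma$.

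Next I would split each $\Delta_j = \lfloor \Delta_j\rfloor + \delta_j$ with $0 \le \delta_j < 1$ in the identity just obtained, yielding
\begin{equation*}
\sum_{j=1}^m \lfloor\Delta_j\rfloor + m - 2 = w_\Gamma(\gamma)a_\gamma - \varphi, \qquad \varphi := \sum_{j=1}^m \delta_j + \widehat d_\gamma\lambda(a_\gamma,\gamma).
\end{equation*}
The only thing left to check is that $\varphi$ is a nonnegative integer. Nonnegativity is immediate: $\delta_j \ge 0$, $\widehat d_\gamma \ge 0$, and $\lambda(a_\gamma,\gamma) = (a_\gamma + k_\gamma + 1)/d_\gamma > 0$ since $a_\gamma \ge 0$, $k_\gamma \ge 0$ and $d_\gamma > 0$. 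For integrality, observe that $\varphi = w_\Gamma(\gamma)a_\gamma - \sum_{j=1}^m \lfloor\Delta_j\rfloor - m + 2$ by the displayed equation, and every term on the right-hand side is an integer (here $w_\Gamma(\gamma) \in \mathbb{Z}$, $a_\gamma \in \mathbb{Z}$, each $\lfloor\Delta_j\rfloor \in \mathbb{Z}$, and $m \in \mathbb{Z}$), so $\varphi \in \mathbb{Z}$.

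The computation is entirely routine; the one place that requires a moment's care is making sure the combinatorial bookkeeping of the constant $m-2$ is consistent between the $\sum(k_\eta+1)$ expansion and the final identity, i.e.\ that the $m$ edges contribute $m$ copies of the $+1$ in $k_\eta+1$ while $(\ref{KP})$ supplies the extra $w_\Gamma(\gamma)-2$ — but since $w_\Gamma(\gamma) - 2 + m = w_\Gamma(\gamma) + m - 2$ and the $w_\Gamma(\gamma)$ gets absorbed as above, this reduces cleanly to the stated $m-2$. No genuine obstacle is expected; the lemma is a purely formal consequence of the relations $(\ref{P})$ and $(\ref{KP})$ together with the observation that integrality of $\varphi$ follows automatically from the already-established identity rather than requiring any separate argument.
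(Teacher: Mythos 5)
Your proof is correct and follows essentially the same route as the paper's: both apply formulas $(\ref{P})$ and $(\ref{KP})$ to rewrite $\lambda(a_\gamma,\gamma)$ in terms of the data at the adjacent vertices $\eta_j$, then rearrange to obtain the displayed identity, from which the integrality and nonnegativity of $\varphi$ follow immediately. The paper merely presents the same computation more tersely (cross-multiplying a single displayed fraction rather than expanding $\sum_j \Delta_j$ term by term, and declaring the last statement ``obvious'' where you spell out the bookkeeping).
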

	
\begin{proof}
By the formulas $(\ref{P})$ and $(\ref{KP})$ 
$$
\lambda(a_\gamma,\gamma)=\frac{w_\Gamma(\gamma)(a_\gamma+k_\gamma+1)}{w_\Gamma(\gamma)d_\gamma}
=\frac{w_\Gamma(\gamma)a_\gamma+\sum_{j=1}^mk_{\eta_j}+2}
{\sum_{j=1}^m d_{\eta_j}+\widehat d_\gamma}.$$
Therefore
\begin{equation*}
\sum_{j=1}^m\Delta_j+\widehat d_\gamma\lambda(a_\gamma,\gamma)+m-2=w_\Gamma(\gamma)a_\gamma.
\end{equation*}
The last statement is now obvious.
\end{proof}

The following lemma, which can be considered as a generalization of Lemma~\ref{proyenveys}, will play a crucial role in the sequel: 

\begin{lemma}\label{veys}
Given any vertex $\gamma\in |\Gamma|$ and any nonnegative integer $a_\gamma$, we may choose for
every vertex $\eta\sim\gamma$ a nonnegative integer $a_\eta$ so that 
$$
w_\Gamma(\gamma)a_\gamma=\sum_{\eta\sim\gamma}a_\eta\hspace{4pt}
\text{ and }\hspace{4pt}\lambda(a_\eta,\eta)\ge\lambda(a_\gamma,\gamma),
$$ 
where the latter inequality holds for each $\eta\sim\gamma$ except at most one. More precisely, if $$\{\eta\mid\eta\sim\gamma\}=\{\eta_1,\dots,\eta_m\},$$
where $m>1$, then the following is true: 

\begin{itemize}

\item[1)] If it is possible to find a nonnegative integer $a_{\eta_1}$ with $\lambda(a_{\eta_1},\eta_1)=\lambda(a_\gamma,\gamma)$, then one may choose the other integers $a_{\eta_j}$ so that  
$$\lambda(a_{\eta_j},\eta_j)\ge \lambda(a_\gamma,\gamma)$$
for all $1<j\le m$. Moreover, this choice can be done in such a way that the strict inequality holds for all except at most one $1<j\le m$. 
In the case we already have a nonnegative integer $a_{\eta_2}$ with $\lambda(a_{\eta_2},\eta_2) = \lambda(a_\gamma,\gamma)$, we can assume
that the inequality is strict for all $2 < j \le m$.

\item[2)] If it is possible to find a nonnegative integer $a_{\eta_1}$ with $\lambda(a_{\eta_1},\eta_1)<\lambda(a_\gamma,\gamma)$ or, in the case $\widehat d_\gamma>0$, $\lambda(a_{\eta_1},\eta_1)=\lambda(a_\gamma,\gamma)$, 
then one can choose the other integers $a_\eta$ in such a way that 
$$\lambda(a_{\eta_j},\eta_j)> \lambda(a_\gamma,\gamma)$$
holds for every $1<j\le m$.

\end{itemize}

\end{lemma}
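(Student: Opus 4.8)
The plan is to work with the quantities $\Delta_j$ and their integer/fractional parts $\lfloor\Delta_j\rfloor$, $\delta_j$ introduced before Lemma~\ref{veyslemma}, and to distribute the ``deficit'' $\varphi=\sum_j\delta_j+\widehat d_\gamma\lambda(a_\gamma,\gamma)$ among the vertices $\eta_j$ by setting $a_{\eta_j}=\lfloor\Delta_j\rfloor+c_j$ for suitable nonnegative integers $c_j$ with $\sum_j c_j=\varphi$. By Lemma~\ref{veyslemma} the constraint $w_\Gamma(\gamma)a_\gamma=\sum_j a_{\eta_j}$ is then automatically satisfied, since $\sum_j\lfloor\Delta_j\rfloor+m-2=w_\Gamma(\gamma)a_\gamma-\varphi$ forces $m-2=\sum_j(1-1)+\dots$; more precisely one checks $\sum_j a_{\eta_j}=\sum_j\lfloor\Delta_j\rfloor+\varphi=w_\Gamma(\gamma)a_\gamma-(m-2)$, so a harmless shift (replacing $m-2$ of the $a_{\eta_j}$ by $a_{\eta_j}+1$) absorbs the $m-2$ term; I would phrase the bookkeeping so this is transparent. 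The point of the fractional shift is that $\lambda(a_{\eta_j},\eta_j)\ge\lambda(a_\gamma,\gamma)$ holds iff $a_{\eta_j}\ge\Delta_j$, i.e.\ iff $c_j\ge\delta_j$, i.e.\ $c_j\ge 1$ when $\delta_j>0$ and $c_j\ge 0$ when $\delta_j=0$; and it is strict iff $c_j>\delta_j$ (when $\delta_j>0$) or $c_j\ge 1$ (when $\delta_j=0$). So the whole lemma becomes a counting problem: distribute $\varphi$ units among the $\eta_j$ respecting these thresholds.

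For part 1): the hypothesis that some nonnegative integer $a_{\eta_1}$ realizes $\lambda(a_{\eta_1},\eta_1)=\lambda(a_\gamma,\gamma)$ means $\Delta_1\in\mathbb Z$, i.e.\ $\delta_1=0$. I would then show $\varphi\ge m-1$ by estimating: $\varphi=\sum_{j\ge 2}\delta_j+\widehat d_\gamma\lambda(a_\gamma,\gamma)$, and combine this with Lemma~\ref{proyenveys}, whose numbers $\ga_{\gamma,\eta_j}$ relate to $\delta_j$ through $\Delta_j=\Delta_j(a_\gamma)$ being an affine function of $a_\gamma$ with the same ``fractional obstruction'' as $\ga_{\gamma,\eta_j}$ (indeed $\Delta_j(0)=-k_{\eta_j}-1+d_{\eta_j}\lambda(0,\gamma)=-\ga_{\gamma,\eta_j}-\cdots$, up to the integer $d_{\eta_j}\lambda(0,\gamma)$-part — this is exactly the computation I expect to be delicate). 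Part~a) of Lemma~\ref{proyenveys} gives $|\ga_{\gamma,\eta_j}|<1$ for all but the degenerate single-edge case, and part~b) gives $\ga_{\gamma,\eta_j}\ge 0$ for all but at most one $j$; translating, at most one $\delta_j$ can fail to be ``large'', which yields $\varphi\ge m-1$. Then assign $c_1=0$ and greedily give $c_j=1$ to the $m-1$ vertices with $j>1$; since $\varphi\ge m-1$ this is feasible, it dumps the surplus $\varphi-(m-1)$ onto one chosen vertex (making that inequality strict), and all other $j>1$ with $\delta_j=0$ automatically get strict inequality from $c_j=1\ge 1>0=\delta_j$. The special sub-case ($a_{\eta_2}$ also already realizes equality, i.e.\ $\delta_2=0$ too) gives an extra unit of slack, $\varphi\ge m$, so one can put $c_2=0$ as well and still make $c_j\ge 1$ strict for all $j>2$.

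For part 2): the hypothesis gives either $\Delta_1\notin\mathbb Z$ with $a_{\eta_1}=\lceil\Delta_1\rceil-1$ available below $\Delta_1$ (i.e.\ $\delta_1>0$, so we may legitimately set $c_1=0$ and ``waste'' nothing), or $\widehat d_\gamma>0$ which injects a positive real $\widehat d_\gamma\lambda(a_\gamma,\gamma)$ into $\varphi$. In either case I claim $\varphi\ge m-1$ again (now with one of the $\delta_j$'s, namely $\delta_1$, allowed to be positive ``for free'', or with the $\widehat d_\gamma$-term supplying the missing unit), so distributing $c_j=1$ to every $j>1$ is feasible and makes every inequality $\lambda(a_{\eta_j},\eta_j)>\lambda(a_\gamma,\gamma)$ strict. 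The main obstacle, and where I would spend the most care, is the precise integrality/inequality translation between the $\ga_{\gamma,\eta_j}$ of Lemma~\ref{proyenveys} and the $\delta_j$ here — in particular pinning down exactly when $\delta_j=0$, handling the boundary cases where some $\ga_{\gamma,\eta_j}=0$ or $=-1$ (the degenerate configurations in Lemma~\ref{proyenveys}b), and verifying $\varphi\ge m-1$ (resp.\ $\ge m$) holds with no off-by-one error in each branch. Everything else is routine arithmetic with $\lambda$ and the base-change formulas (\ref{P}), (\ref{KP}).
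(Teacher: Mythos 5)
Your plan starts from the same scaffolding as the paper (the quantities $\Delta_j$, $\delta_j$, and $\varphi$ from Lemma~\ref{veyslemma}), but the core estimate you rely on is wrong, and this makes the argument collapse. First, the bookkeeping: if $a_{\eta_j}=\lfloor\Delta_j\rfloor+c_j$, then by Lemma~\ref{veyslemma} the constraint $\sum_j a_{\eta_j}=w_\Gamma(\gamma)a_\gamma$ forces $\sum_j c_j=\varphi+m-2$, not $\varphi$; you acknowledge the $m-2$ term but never carry it through, and it changes what lower bound on $\varphi$ you actually need. More seriously, the claimed inequality $\varphi\ge m-1$ is false. Take any vertex $\gamma$ with $m=2$ adjacent vertices, $\widehat d_\gamma=0$, and an $a_\gamma$ for which both $\Delta_1$ and $\Delta_2$ are integers (so $\delta_1=\delta_2=0$); then $\varphi=\sum_j\delta_j+\widehat d_\gamma\lambda(a_\gamma,\gamma)=0<m-1=1$. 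This configuration satisfies the hypothesis of part~1), and indeed in that configuration one can only arrange $\lambda(a_{\eta_2},\eta_2)=\lambda(a_\gamma,\gamma)$ with equality — which is precisely the ``at most one exception'' the lemma allows — so any proof that yields strict inequality for every $j>1$ in part~1) would prove something false.

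The proposed derivation of $\varphi\ge m-1$ from Lemma~\ref{proyenveys} also does not make sense: the numbers $\ga_{\gamma,\eta_j}$ are fixed real invariants of the resolution, whereas the $\delta_j$ are fractional parts of $\Delta_j=\Delta_j(a_\gamma)$, which vary with $a_\gamma$ in a way that the sign or size of $\ga_{\gamma,\eta_j}$ simply does not control. In the paper, Lemma~\ref{proyenveys} is used for a different and much weaker purpose — namely to show $\Delta_j\ge 0$ when $a_\gamma>0$, so that the chosen $a_{\eta_j}$ are nonnegative. The paper never needs anything like $\varphi\ge m-1$: the identity $\lfloor\Delta_1\rfloor+\sum_{j=2}^m(\lfloor\Delta_j\rfloor+1)=w_\Gamma(\gamma)a_\gamma+1-\varphi$ together with $\varphi\ge 0$ (always true) or $\varphi\ge 1$ (when $\delta_1>0$ or $\widehat d_\gamma>0$) already shows that the minimum admissible sum is $\le w_\Gamma(\gamma)a_\gamma$, and then one simply pads. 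You should abandon the target estimate $\varphi\ge m-1$, fix the $m-2$ offset, and instead only establish $\varphi\ge 1$ under the hypotheses of part~2); also note the case $a_\gamma=0$, which the paper treats separately using Lemma~\ref{proyenveys} directly, is missing from your proposal.
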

	
\begin{proof}
Obviously we may assume that $m>1$. 

In the case $a_\gamma=0$, we set $a_\eta=0$ for every $\eta\sim \gamma$.
Since $$\lambda(a_\eta,\eta)-\lambda(a_\gamma,\gamma)=\frac{\ga_{\gamma,\eta}}{d_\eta},$$
the claim follows from Lemma~\ref{proyenveys}. 

Suppose thus that $a_\gamma>0$. 
By Lemma~\ref{proyenveys} $\ga_{{\eta_j},\gamma}\ge -1$. 
Then 
$$\ga_{\gamma, \eta_j}=-\frac{d_{\eta_j}}{d_\gamma} \ga_{{\eta_j},\gamma}\le \frac{d_{\eta_j}}{d_{\gamma}}.$$
As $a_\gamma>0$, this implies that 
$$
\Delta_j=\frac{a_\gamma d_{\eta_j}}{d_{\gamma}}-\ga_{\gamma,\eta_j}\ge 0.$$
By Lemma~\ref{veyslemma} 
$$\left\lfloor\Delta_1\right\rfloor+ \sum_{j=2}^m(\left\lfloor\Delta_j\right\rfloor+1)
=w_\Gamma(\gamma)a_\gamma+1-\varphi
,$$
where 
$$\varphi=\sum_{j=1}^m\delta_j+\widehat d_\gamma\lambda(a_\gamma,\gamma)$$
is a nonnegative integer.

We can assume that for some $j$ there exists a nonnegative integer $a_{\eta_j}$ with $\lambda(a_{\eta_j},\eta_j) \le \lambda(a_\gamma,\gamma)$.
If this is not the case, then choose any nonnegative integers  $a_{\eta_j}$ satisfying
$$\sum_{j=1}^m a_{\eta_j}=w_\Gamma(\gamma)a_\gamma.$$
Then $\lambda(a_{\eta_j},\eta_j)>\lambda(a_\gamma,\gamma)$ for all $j=1,\ldots,m$ by the above assumption. But this means that
we have proven the claim.

Suppose thus that, for example, $\lambda(a_{\eta_1},\eta_1) \le \lambda(a_\gamma,\gamma)$. We will first consider the case 
$\lambda(a_{\eta_1},\eta_1) < \lambda(a_\gamma,\gamma)$. Because $\lambda(a_\gamma,\gamma)=\lambda(\Delta_{\eta_1},\eta_1)$,
this implies that $a_{\eta_1}<\Delta_1$. Then either $a_{\eta_1}\le \left\lfloor\Delta_1\right\rfloor-1$ 
or $a_{\eta_1}=\left\lfloor\Delta_1\right\rfloor$ and
$\delta_1>0$. In the first case we write
$$\left\lfloor\Delta_1\right\rfloor-1+\sum_{j=2}^m(\left\lfloor\Delta_j\right\rfloor +1)
=\sum_{j=1}^m\left\lfloor\Delta_j\right\rfloor+m-2 
\le w_\Gamma(\gamma)a_\gamma,$$
whereas in the latter case we have $\varphi\ge 1$ so that
$$\left\lfloor\Delta_1\right\rfloor+\sum_{j=2}^m(\left\lfloor\Delta_j\right\rfloor +1)
=\sum_{j=1}^m\left\lfloor\Delta_j\right\rfloor+m-1 
=w_\Gamma(\gamma)a_\gamma+1-\varphi\le w_\Gamma(\gamma)a_\gamma.
$$
It comes therefore out that it is possible to find numbers 
$a_{\eta_j} \ge \left\lfloor\Delta_j\right\rfloor+1>\Delta_j$ 
for $j=2,\ldots,m$ such that 
$$
\sum_{j=1}^m a_{\eta_j}=w_\Gamma(\gamma)a_\gamma.$$
Because $\lambda(a_\gamma,\gamma)=\lambda(\Delta_j,\eta_j)$, 
$a_{\eta_j}>\Delta_j$
implies 
$\lambda(a_{\eta_j},\eta_j)>\lambda(a_\gamma,\gamma)$. 

Consider then the case $\lambda(a_{\eta_1},\eta_1) = \lambda(a_\gamma,\gamma)$. Now $a_{\eta_1}=\Delta_1=\lfloor\Delta_1 \rfloor$.
We immediately observe that the above argument works if $\widehat d_1>0$. This is the case also if some $\delta_{j}
>0$. We can therefore assume that $\delta_2=0$. Then $\Delta_2=\lfloor \Delta_2 \rfloor$ is an integer.
Take $a_{\eta_2}=\Delta_2$. We can now write
$$
\left\lfloor\Delta_1\right\rfloor+\left\lfloor\Delta_2\right\rfloor+\sum_{j=3}^m(\left\lfloor\Delta_j\right\rfloor +1)
=\sum_{j=1}^m\left\lfloor\Delta_j\right\rfloor+m-2 
\le w_\Gamma(\gamma)a_\gamma-\varphi \le w_\Gamma(\gamma)a_\gamma.
$$
Finally note that $\lambda(a_{\eta_2},\eta_2) = \lambda(a_\gamma,\gamma)$ of course implies $a_{\eta_2}=\Delta_2$.
\end{proof}

\section{Main results}

We first want to give a criterium for a positive rational number to be a jumping number. We begin with
two lemmata. In the first one an antinef divisor is constructed for an ordered tree structure
on the dual graph:

\begin{lemma}\label{vieritys}
Let $S\subset |\Gamma|$ be a connected set of vertices. Suppose that there is a collection of 
nonnegative integers $\{a_\nu\in\bbN\mid d(\nu,S)\le1\}$ such that 
\begin{itemize}
\item[i)] $\lambda(a_\nu,\nu)>\xi=\lambda(a_\gamma,\gamma)$ for any $\gamma \in S$ and $\nu \sim S$; 
\item[ii)] $\displaystyle w_\Gamma(\gamma)a_{\gamma} \ge \sum_{\nu\sim\gamma}a_\nu$ for every $\gamma\in S$.
\end{itemize}
Then there exists an antinef divisor $F\in \Lambda$ such that 
\begin{itemize}
\item[1)] $f_{\nu}=a_{\nu}$ for all $\nu \in |\Gamma|$ with $d(\nu,S)\le1$;
\item[2)] $\widehat f_{\nu}=0$ for $\nu\not\in S$ unless $\nu$ is an end; 
\item[3)] For any $\nu\sim\mu$ such that $d(\nu,S)>d(\mu,S)$ we have 
$$\lambda(f_{\nu},\nu)>\lambda(f_{\mu},\mu).$$
\end{itemize}
\end{lemma}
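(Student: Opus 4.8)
The idea is to build $F$ by spreading out from $S$, using Lemma~\ref{veys} repeatedly to propagate the integers $a_\nu$ to vertices farther and farther from $S$, and then to verify that the resulting divisor is antinef and satisfies the three listed properties. First I would set up the induction on the distance $d(\cdot,S)$: the values $a_\nu$ for $d(\nu,S)\le 1$ are given, and for a vertex $\gamma$ with $d(\gamma,S)=r\ge1$ having a neighbour $\mu$ with $d(\mu,S)=r-1$ (so $\mu$ lies "closer" and $a_\mu$ is already defined), I would like to define $a_\eta$ for the neighbours $\eta\sim\gamma$ with $d(\eta,S)=r+1$. Here is where Lemma~\ref{veys} enters: applied at the vertex $\gamma$ with the prescribed integer $a_\gamma$ and the already-known neighbour value $a_\mu$ (which by the inductive hypothesis, or by hypothesis~i) in the base case, satisfies $\lambda(a_\mu,\mu)<\lambda(a_\gamma,\gamma)$ — or equals it, but then we are in an "end" situation), case~2) of Lemma~\ref{veys} produces nonnegative integers $a_\eta$ for the remaining neighbours with $\lambda(a_\eta,\eta)>\lambda(a_\gamma,\gamma)$ and with the balancing identity $w_\Gamma(\gamma)a_\gamma=\sum_{\eta\sim\gamma}a_\eta$. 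This last identity is exactly $\widehat f_\gamma=0$ by formula~(\ref{P}), which gives property~2) at every $\gamma\notin S$ that is not an end. Combined with the inductive strict-increase $\lambda(a_\eta,\eta)>\lambda(a_\gamma,\gamma)>\lambda(a_\mu,\mu)$, this also propagates property~3) and the strict-monotonicity that feeds the next inductive step.

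Once all the integers $a_\nu$ are defined on $|\Gamma|$ (the process terminates since $\Gamma$ is a finite tree), I would let $F:=\sum_\nu a_\nu E_\nu$ and check antinefness, i.e. $\widehat f_\nu\ge0$ for all $\nu$, via the form~(\ref{Pvalu}): $w_\Gamma(\nu)f_\nu\ge\sum_{\mu\sim\nu}f_\mu$. For $\nu\notin S$ that is not an end this is equality by construction ($\widehat f_\nu=0$); for $\nu$ an end it holds automatically once $a_\nu\ge0$ and its unique neighbour has a nonnegative value, since $w_\Gamma(\nu)\ge1$ — more carefully, an end $\nu$ adjacent to $\mu$ with $d(\mu,S)=d(\nu,S)-1$ had $a_\nu$ chosen by Lemma~\ref{veys} applied at $\mu$, which only constrains it to be a nonnegative integer and to satisfy the inequality at $\mu$, so we retain freedom to take $a_\nu$ large enough; alternatively the monotonicity $\lambda(a_\nu,\nu)>\lambda(a_\mu,\mu)$ already forces $\widehat f_\nu\ge0$ through Lemma~\ref{ketju}-type bookkeeping. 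For $\gamma\in S$ the inequality~(\ref{Pvalu}) is precisely hypothesis~ii), since all neighbours $\nu\sim\gamma$ either lie in $S$ (value $a_\nu$ given) or satisfy $\nu\sim S$ (value $a_\nu$ given). Thus $F$ is antinef, giving the existence statement; property~1) holds by construction since we never redefine the prescribed values; property~3) follows from the strict increase established along the induction, because any edge $\nu\sim\mu$ with $d(\nu,S)>d(\mu,S)$ necessarily has $d(\nu,S)=d(\mu,S)+1$ in a tree.

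**The main obstacle.**
The delicate point is the bookkeeping at vertices of $S$ and at the boundary $\nu\sim S$: one must make sure that when Lemma~\ref{veys} is invoked at a vertex $\gamma$ with $d(\gamma,S)=r$, the "already-known" neighbour really is the one toward $S$ and really does satisfy the strict inequality $\lambda(a_\mu,\mu)<\lambda(a_\gamma,\gamma)$ needed for case~2) — for $r=1$ this is hypothesis~i) read the other way (the neighbour $\mu\in S$ has $\lambda(a_\mu,\mu)=\xi<\lambda(a_\gamma,\gamma)$), and for $r\ge2$ it is the inductive conclusion. A subtlety is that $\gamma$ with $d(\gamma,S)=1$ may be adjacent to \emph{several} vertices of $S$; but all of those carry the value $\xi$, so applying Lemma~\ref{veys} with one of them in the role of $\eta_1$ is consistent, and the remaining $S$-neighbours are handled by hypothesis~ii) at those vertices rather than by the lemma at $\gamma$ — so one should organize the induction so that Lemma~\ref{veys} is applied only at vertices \emph{not} in $S$ and only to assign values strictly farther from $S$. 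The other genuinely fiddly part is the treatment of ends: Lemma~\ref{veys} gives freedom in choosing $a_\eta$ for an end $\eta$, and one needs $\widehat f_\eta\ge0$, which may require choosing $a_\eta$ minimally subject to $\lambda(a_\eta,\eta)>\lambda(a_\gamma,\gamma)$ — this is the unique place where property~2) is allowed to fail ($\widehat f_\eta>0$ at ends), and the statement explicitly permits it, so no contradiction arises.
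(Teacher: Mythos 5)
Your plan matches the paper's proof almost step for step: define $f_\nu=a_\nu$ for $d(\nu,S)\le 1$, spread outward by induction on $d(\cdot,S)$, invoke Lemma~\ref{veys} case~2) at each non-end $\nu\notin S$ using the unique closer neighbour $\mu'$ (which by induction satisfies $\lambda(f_{\mu'},\mu')<\lambda(f_\nu,\nu)$), and then read off $\widehat f_\nu=0$ from the balance identity, $\widehat f_\gamma\ge0$ on $S$ from hypothesis~ii), and property~3) from the accumulated strict increases.

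The one place where your argument needs care is the antinefness check at end vertices $\tau\notin S$. Your first suggestion, ``we retain freedom to take $a_\tau$ large enough,'' does not actually work: Lemma~\ref{veys} at the parent $\gamma$ pins down $\sum_{\eta\sim\gamma}a_\eta=w_\Gamma(\gamma)a_\gamma$, so if $\tau$ is $\gamma$'s only child the value $a_\tau$ is forced, and if there are several children you cannot enlarge $a_\tau$ without shrinking a sibling's value and potentially ruining its required inequality. Your fallback — that the already-established monotonicity $\lambda(a_\tau,\tau)>\lambda(a_\gamma,\gamma)$ alone forces $\widehat f_\tau\ge 0$ — is precisely what the paper uses, but note that Lemma~\ref{ketju} does not give this (there $\widehat f_\gamma\ge0$ is a \emph{hypothesis}); the paper proves the needed implication in a separate short Lemma~\ref{R}, via the formulas (\ref{P}) and (\ref{KP}). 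So your proposal is correct in outline and in its key idea, but the end-vertex step is an assertion that has to be proved, not cited from Lemma~\ref{ketju}.
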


\begin{proof} If $N=1$, then there is nothing to prove. Suppose thus that $N>1$. We will define nonnegative integers 
$f_\nu$ inductively on $d(\nu,S)$. First set $f_{\nu}=a_{\nu}$ for all $\nu\in |\Gamma|$ with $d(\nu,S)\le 1$. 
If $S=|\Gamma|$, we are done. Suppose 
then that $f_\nu$ has been defined for some $\nu\in |\Gamma|$ with $d(\nu,S)>0$. Because $S$ is connected and $\Gamma$ contains 
no loops, there is a unique $\mu' \in |\Gamma|$ such that $\mu' \sim \nu$ and 
$d(\mu',S)=d(\nu,S)-1$. By induction we know that $\lambda(f_\nu,\nu)>\lambda(f_{\mu'},{\mu'})$.
Therefore we can use Lemma~\ref{veys} to find for each $\mu'\not=\mu \sim \nu$ a nonnegative integer
$f_\mu$ such that $\lambda(f_\mu,\mu)>\lambda(f_\nu,\nu)$. In the case where $\nu$ is not an end we also get 
$$\widehat f_\nu=w_\Gamma(\nu)f_\nu-\sum_{\mu\sim\nu}f_\mu=0.$$
When all the numbers $f_\nu$ have so been defined, we can set $F=\sum_\nu f_\nu E_\nu.$ 
It remains to show that $F$ is antinef. This is equivalent to $\widehat f_\nu \ge 0$ for all $\nu$.
We have already seen that $\widehat f_\nu=0$ if $\nu\not\in S$ is not an end. Moreover, $\widehat f_\nu \ge 0$
when $\nu\in S$. In order to complete the proof we need to use the following Lemma~\ref{R}.
\end{proof}

\begin{lemma}\label{R}
Assume that $N>1$. Let $\tau$ be an end and $\mu$ the vertex adjacent to it. 
If $G\in\Lambda$ is such that $\lambda(g_\tau,\tau)>\lambda(g_\mu,\mu)$, then ${\widehat g}_\tau\ge0$.
\end{lemma}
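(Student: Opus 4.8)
The plan is to translate the hypothesis $\lambda(g_\tau,\tau)>\lambda(g_\mu,\mu)$ into an inequality between integers and then combine it with the structural facts about an end vertex. Since $\tau$ is an end, by~(\ref{P}) we have
$$\widehat g_\tau = w_\Gamma(\tau)g_\tau - \sum_{\nu\sim\tau}g_\nu = w_\Gamma(\tau)g_\tau - g_\mu,$$
so the assertion $\widehat g_\tau\ge 0$ is equivalent to $w_\Gamma(\tau)g_\tau\ge g_\mu$. So the goal is to bound $g_\mu$ from above in terms of $g_\tau$, using only the valuative inequality at $\tau$ together with the geometry encoded in $d_\tau,d_\mu,k_\tau,k_\mu$.

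First I would spell out the hypothesis: $\lambda(g_\tau,\tau)>\lambda(g_\mu,\mu)$ means
$$\frac{g_\tau+k_\tau+1}{d_\tau} > \frac{g_\mu+k_\mu+1}{d_\mu},$$
i.e. $d_\mu(g_\tau+k_\tau+1) > d_\tau(g_\mu+k_\mu+1)$. Since these are integers this gives $d_\mu(g_\tau+k_\tau+1) \ge d_\tau(g_\mu+k_\mu+1)+1$, hence
$$g_\mu \le \frac{d_\mu}{d_\tau}(g_\tau+k_\tau+1) - (k_\mu+1) - \frac{1}{d_\tau}.$$
Next I would use the relations for an end vertex with $w_\Gamma(\tau)=1$ (the case $w_\Gamma(\tau)\ge 2$ cannot occur for an end since $w_\Gamma(\tau)=1+\#\{\mu\succ\tau\}$ and an end adjacent to a single vertex forces $E_\tau^2=-1$; more precisely an end has $w_\Gamma(\tau)=1$ whenever $\tau=N$ is the last blowup, but in general an end can have higher weight — so I should be careful here and instead argue directly from~(\ref{Pvalu})/(\ref{KP}) without assuming $w_\Gamma(\tau)=1$). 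Using~(\ref{KP}) at $\tau$: $w_\Gamma(\tau)k_\tau = 2 - w_\Gamma(\tau) + k_\mu$, so $k_\mu = w_\Gamma(\tau)(k_\tau+1) - 2$, i.e. $k_\mu+1 = w_\Gamma(\tau)(k_\tau+1)-1$. Similarly, by~(\ref{P}) applied with the structure of an end and the fact that $E_\tau$ is obtained as a blowup, one relates $d_\mu$ and $d_\tau$: writing $w:=w_\Gamma(\tau)$, one has $\widehat d_\tau = w d_\tau - d_\mu \ge 0$ precisely when $\fa$'s factorization involves $\fp_\tau$, but in general $d_\mu \le w d_\tau$ need not hold a priori — indeed that is essentially what we want to prove for $G$ rather than $D$.

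So the main obstacle, and the heart of the argument, is to combine the valuative inequality with the arithmetic of $d_\tau, d_\mu$. Substituting $k_\mu+1 = w(k_\tau+1)-1$ into the bound for $g_\mu$:
$$g_\mu \le \frac{d_\mu}{d_\tau}(g_\tau+k_\tau+1) - w(k_\tau+1) + 1 - \frac{1}{d_\tau}.$$
To conclude $g_\mu \le w g_\tau$ it suffices to show
$$\frac{d_\mu}{d_\tau}(g_\tau+k_\tau+1) - w(k_\tau+1) + 1 - \frac{1}{d_\tau} \le w g_\tau,$$
equivalently $\left(\frac{d_\mu}{d_\tau}-w\right)(g_\tau+k_\tau+1) \le \frac{1}{d_\tau}-1 \le 0$ (using $d_\tau\ge 1$). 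Since $g_\tau+k_\tau+1 = d_\tau\lambda(g_\tau,\tau) > 0$ (note $k_\tau\ge 0$ and one needs $g_\tau+k_\tau+1>0$, which follows as $\lambda(g_\tau,\tau)>\lambda(g_\mu,\mu)\ge$ something nonnegative, or directly), it is enough to know $d_\mu \le w d_\tau$, i.e. $\widehat d_\tau\ge 0$ — which holds because $D$ is antinef. Thus the key input is precisely that the \emph{original} divisor $D$ is antinef, giving $\widehat d_\tau = w_\Gamma(\tau)d_\tau - d_\mu\ge 0$; this, fed into the displayed inequality together with $d_\tau\ge 1$ and $g_\tau+k_\tau+1>0$, yields $\left(\frac{d_\mu}{d_\tau}-w\right)(g_\tau+k_\tau+1)\le 0 < \frac{1}{d_\tau}-1$ unless $d_\tau=1$, in which case the inequality is $\le 0$ and still holds. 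Hence $g_\mu\le w g_\tau$, i.e. $\widehat g_\tau\ge 0$, as desired. I would double-check the boundary case $d_\tau = 1$ (where $\frac1{d_\tau}-1=0$) and the case $d_\mu = w d_\tau$ (where $\widehat d_\tau=0$), since there the inequality becomes tight; in both cases the integrality step $d_\mu(g_\tau+k_\tau+1)\ge d_\tau(g_\mu+k_\mu+1)+1$ provides the needed slack.
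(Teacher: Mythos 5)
Your overall strategy is the same as the paper's: unwind $\lambda(g_\tau,\tau)>\lambda(g_\mu,\mu)$ using~(\ref{P}) and~(\ref{KP}) at the end vertex $\tau$, feed in antinefness of $D$ to get $\widehat d_\tau\ge 0$, and finish by integrality. However, the execution has a genuine gap at the final step. After substituting $k_\mu+1=w(k_\tau+1)-1$ you correctly reduce to the sufficient condition
$$\left(\frac{d_\mu}{d_\tau}-w\right)(g_\tau+k_\tau+1)\;\le\;\frac{1}{d_\tau}-1,$$
but then you assert ``$\le 0<\frac{1}{d_\tau}-1$''. This is false: for $d_\tau\ge 1$ one has $\frac{1}{d_\tau}-1\le 0$, never $>0$. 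Knowing that the left-hand side is $\le 0$ (from $\widehat d_\tau\ge 0$ and $g_\tau+k_\tau+1\ge 0$) does not give the required bound $\le\frac1{d_\tau}-1$, since the right-hand side is itself nonpositive. Multiplying through by $d_\tau$ shows the condition you actually need is $\widehat d_\tau(g_\tau+k_\tau+1)\ge d_\tau-1$, and that does not follow from anything you established; it is a strictly stronger requirement than the paper's.

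The root cause is that you invoked integrality at the wrong moment: converting $d_\mu(g_\tau+k_\tau+1)>d_\tau(g_\mu+k_\mu+1)$ into $\ge\ {}+1$ and then dividing by $d_\tau$ introduces the non-integer term $\frac1{d_\tau}$, which you cannot absorb. The paper avoids this by keeping the strict inequality and rewriting $\lambda(g_\tau,\tau)=\frac{w g_\tau+k_\mu+2}{d_\mu+\widehat d_\tau}$; since $\widehat d_\tau\ge 0$ this is $\le\frac{w g_\tau+k_\mu+2}{d_\mu}=\lambda(w g_\tau+1,\mu)$, and comparing with $\lambda(g_\mu,\mu)<\lambda(g_\tau,\tau)$ gives $g_\mu<w g_\tau+1$ over the common denominator $d_\mu$, so integrality at the very end yields $g_\mu\le w g_\tau$. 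Your argument can be repaired along exactly these lines: drop the ``$+1$'' at the start, obtain $g_\mu<\frac{d_\mu}{d_\tau}(g_\tau+k_\tau+1)-w(k_\tau+1)+1$, and then show the right-hand side is $\le w g_\tau+1$, which reduces to $\left(\frac{d_\mu}{d_\tau}-w\right)(g_\tau+k_\tau+1)\le 0$; this \emph{is} implied by $\widehat d_\tau\ge 0$ (and $g_\tau+k_\tau+1\ge 0$), and the final integrality step $g_\mu<w g_\tau+1\Rightarrow g_\mu\le w g_\tau$ closes the proof.
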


\begin{proof}
Note first that ${\widehat g}_\tau=w_\Gamma(\tau)g_\tau-g_\mu$ by (\ref{P}).
The assumption now says that 
$$
\frac{g_\mu+k_\mu+1}{d_\mu}<
\frac{g_\tau+k_\tau+1}{d_\tau}.
$$ 
By the formulas $(\ref{P})$ and $(\ref{KP})$
$$
\frac{w_\Gamma(\tau)\left(g_\tau+k_\tau+1\right)}{w_\Gamma(\tau)d_\tau}=
\frac{w_\Gamma(\tau)g_\tau+k_\mu+2}{d_\mu+\widehat{d}_\tau}\le
\frac{(w_\Gamma(\tau)g_\tau+1)+k_\mu+1}{d_\mu}.
$$
Thereby 
$$
\frac{g_\mu+k_\mu+1}{d_\mu}<\frac{(w_\Gamma(\tau)g_\tau+1)+k_\mu+1}{d_\mu},
$$
so that
$g_\mu<w_\Gamma(\tau)g_\tau+1$, i.e., ${\widehat g}_\tau\ge0$.
\end{proof} 

We are now able to prove our first main result:

\begin{theorem}\label{hyppylukukriteeri}
Let $\fa$ be an ideal in a two-dimensional regular local ring $R$. A positive rational
number $\xi$ is a jumping number of $\fa$
if and only if there exists a connected set of vertices $S\subset|\Gamma|$ and a
collection of nonnegative integers $\{a_\eta\in\bbN\mid d(\eta,S)\le1\}$ 
satisfying the following conditions:
\begin{itemize}
\item[i)]   $\lambda(a_\eta,\eta)>\xi=\lambda(a_\gamma,\gamma)$ for every $\gamma\in S$
and every $\eta\sim S$;
\item[ii)]  $\displaystyle w_{\Gamma}(\gamma) a_\gamma=\sum_{\nu \sim\gamma}a_\nu$ for every vertex
$\gamma\in S$ (when $N>1$).
\end{itemize}
The condition ii) can be replaced by the condition
\begin{itemize}
\item[ii')] $\displaystyle w_{\Gamma}(\gamma) a_\gamma\ge \sum_{\nu\sim\gamma}a_\nu$ for every vertex
$\gamma\in S$.
\end{itemize}
When these conditions hold, there exists an antinef divisor $F$ with $\xi=\xi_F$ such that
$S=S_F$. Finally, consider the conditions 
\begin{itemize}
\item[1)] $S$ is a chain;
\item[2)] if $\gamma$ is not an end of $S$, then $\widehat d_\gamma=0$;
\item[3)] if $\gamma$ is an end of $S$, then $\gamma$ is a star or $\widehat d_\gamma>0$.
\end{itemize}
The set $S$ can then be chosen in such a way that conditions 1) and 2) hold while condition 3) is true for any $S$.
\end{theorem}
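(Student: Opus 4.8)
The plan is to handle the two directions separately, with the forward direction being essentially a restatement of Proposition~\ref{parametrisointi} combined with the construction afforded by Lemma~\ref{veys}, and the backward direction being the content of Lemma~\ref{vieritys}. First I would treat sufficiency: given a connected $S$ and integers $\{a_\eta\}$ satisfying i) and ii'), Lemma~\ref{vieritys} directly produces an antinef divisor $F$ with $f_\nu=a_\nu$ for $d(\nu,S)\le 1$ and with $\lambda(f_\nu,\nu)$ strictly increasing along paths away from $S$. From property 3) of that lemma and condition i) one reads off that $S_F=S$ and that the minimum value $\xi_F$ equals $\xi$, whence $\xi$ is a jumping number by Proposition~\ref{parametrisointi}. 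The equivalence of ii) and ii') is clear in one direction; for the other, note that if $F$ is the antinef divisor produced from data satisfying only ii'), then setting $a_\gamma':=f_\gamma$ and $a_\nu':=f_\nu$ for $\nu\sim S$ gives data satisfying i) with ii) an \emph{equality} at every $\gamma\in S$, since for $\gamma\in S_F$ one still has $\widehat f_\gamma\ge 0$ but the increase condition forces $\widehat f_\gamma = 0$ at non-ends — this is exactly the bookkeeping already done inside Lemma~\ref{vieritys}, so I would invoke it rather than redo it.

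For necessity: if $\xi$ is a jumping number, Proposition~\ref{parametrisointi} gives an antinef $F$ with $\xi=\xi_F$; take $S:=S_F$ and $a_\eta:=f_\eta$. Condition i) holds by definition of $S_F$ (with $>$ replaced by $\ge$), and the strict inequality for $\eta\sim S$, $\eta\notin S$, together with connectedness of $S$, is the part that requires work — a priori $S_F$ need be neither connected nor ``separated'' from the rest by a strict jump. Here is where I expect the main obstacle, and the resolution is to not take $F$ raw but to \emph{rebuild} it: start from a vertex $\gamma_0\in S_F$, set $a_{\gamma_0}:=f_{\gamma_0}$, and propagate outward using Lemma~\ref{veys} to choose the $a_\eta$ for $\eta\sim\gamma_0$ so that $\lambda(a_\eta,\eta)\ge\lambda(a_{\gamma_0},\gamma_0)$ with strict inequality for all but at most one neighbour; iterate, letting $S$ grow along those neighbours where equality persists. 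Lemma~\ref{veys}, parts 1) and 2), is precisely designed so that equality can propagate in at most one direction at a time unless $\widehat d_\gamma=0$ and we are at a genuine branch — which is what makes $S$ come out as a chain (condition 1)) with $\widehat d_\gamma=0$ at its non-ends (condition 2)). That $S$ is connected is automatic from this inductive construction; condition ii) as an equality at non-ends follows because Lemma~\ref{veys} always returns $w_\Gamma(\gamma)a_\gamma=\sum_{\eta\sim\gamma}a_\eta$.

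It remains to extract conditions 1)–3). For 3): an end $\gamma$ of $S$ that is neither a star nor carries $\widehat d_\gamma>0$ has $v_\Gamma(\gamma)\le 2$ and $\widehat d_\gamma=0$, and then Lemma~\ref{ketju}a) applied at $\gamma$ (using $\widehat f_\gamma\ge 0$, valid for $\gamma\in S_F$, and $\lambda(f_\gamma,\gamma)\le\lambda(f_\eta,\eta)$ for the neighbours, valid since $\gamma$ realizes the minimum) forces both neighbours of $\gamma$ to lie in $S_F$, contradicting that $\gamma$ is an end of $S$; hence 3) holds \emph{for any} choice of $S$ arising this way. For 1) and 2): these are the properties that the outward propagation via Lemma~\ref{veys} was arranged to guarantee — at each step, as long as we are at a vertex where the equality case of Lemma~\ref{veys}1) cannot be forced strict (i.e.\ we are obliged to keep one neighbour at equality), that vertex either has $\widehat d_\gamma=0$ or is an end of the growing chain, and we never need more than one equality-neighbour, so $S$ never branches. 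The one thing to be careful about is the degenerate exceptional cases flagged in Lemma~\ref{veys} (where $\widehat d_\gamma>0$ forces a strict neighbour even from an equality start), but these only help: they terminate the chain. I would finish by remarking that the resulting $F$ has $S_F=S$ and $\xi_F=\xi$ by the same reading of Lemma~\ref{vieritys}'s conclusion 3) as in the sufficiency direction, closing the loop.
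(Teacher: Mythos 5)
Your overall strategy — Lemma~\ref{vieritys} for sufficiency, Proposition~\ref{parametrisointi} plus a Lemma~\ref{veys}--driven reconstruction for necessity, and Lemma~\ref{ketju} for condition 3) — matches the paper's. But there is a concrete error in the paragraph you devote to the equivalence of ii) and ii'). You claim that if $F$ is the divisor produced by Lemma~\ref{vieritys} from data satisfying only ii'), then $\widehat f_\gamma=0$ at the non-ends of $S$, and attribute this to ``the bookkeeping already done inside Lemma~\ref{vieritys}.'' That lemma does no such bookkeeping: its conclusion 2) gives $\widehat f_\nu=0$ only for $\nu\notin S$ that are not ends, while for $\gamma\in S$ it simply sets $f_\gamma=a_\gamma$ and hands back whatever slack $\widehat a_\gamma\ge 0$ you started with. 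The ``increase condition'' in conclusion 3) governs vertices moving \emph{away} from $S$ and has no bearing on $\widehat f_\gamma$ for $\gamma\in S$. So this paragraph does not establish the equivalence, and invoking Lemma~\ref{vieritys} for it is a dead end.

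The fix is the reconstruction you sketch later, but that sketch is too loose to carry the weight: you must actively \emph{redefine} the $a$-values on and around $S$ (not merely read them off $F$), and you must specify where the chain is allowed to terminate. The paper does this by first extracting from $S'$ a maximal chain $S$ of vertices with $\widehat d_\gamma=0$ at its non-ends (after dispatching the degenerate case where some $\gamma\in S'$ has a single neighbour with strict inequality in ii')), and then, vertex by vertex along $S$, invoking Lemma~\ref{veys} parts 1) and 2) to overwrite the neighbouring $a_\eta$'s so that equality in ii) holds and all off-chain neighbours satisfy $\lambda(a_\eta,\eta)>\xi$; Lemma~\ref{veys}~2) together with $\widehat d_\gamma>0$ is precisely what is used to terminate the chain at an end. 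In your version, the ``propagate outward from $\gamma_0$'' step never pins down why an end of the resulting $S$ must have $\widehat d_\gamma>0$ or be a star independently of how you chose to stop — the paper gets this for free by building $S$ inside $S'$ so that maximality already encodes it (and then deduces $\widehat d_\gamma>0$ at an end that is adjacent to something in $S'\setminus S$), whereas in your sketch it is circular: you stop when Lemma~\ref{veys} lets you, but that is exactly what needs to be shown to coincide with conditions 2) and 3). Spelling out the maximal-chain selection and the redefinition of the $a_\eta$'s would close the gap.
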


\begin{proof}

The case $N=1$ being trivial, we may assume that $N>1$.

Suppose first that there exists a connected set $S\subset|\Gamma|$ and a
collection of nonnegative integers $\{a_\nu\in\mathbb N | d(\nu, S)\le 1\}$
satisfying conditions i) and ii'). Let $F \in\Lambda$ be an antinef divisor
as in Lemma 4.1. For any chain $\gamma=\nu_0\sim\dots\sim\nu_r = \nu$ going away from $S$, where
$\gamma\in S$, we then have
$$
\lambda(f_\nu,\nu)>\dots>\lambda(f_\gamma,\gamma)=\lambda(a_\nu,\nu)=\xi
$$
so that $\xi=\min\{\lambda(f_\nu,\nu)|\nu\in|\Gamma|\}$. Then $\xi=\xi_F$ is
a jumping number by Proposition~\ref{parametrisointi}. Note that for an end vertex $\gamma$ of $S$, 
we have by Lemma~\ref{ketju} either $\widehat d_\gamma>0$ or $v_\Gamma(\gamma)\ge3$, which shows that 
condition 3) automatically holds for $S$.

Conversely, if $\xi$ is a jumping number, then by Proposition 2.1 there exists
an antinef divisor $F$ such that $\xi=\xi_F$. We may now choose a connected
component $S$ of $S_F$ and set $a_\nu:=f_\nu$ for every $\nu$ with
$d(\nu,S)\le1$. Clearly, the conditions i) and ii') are satisfied. 

It remains to show that if there is a connected set $S'\subset|\Gamma|$ with a
collection $\{a'_\nu\in\mathbb N|d(\nu,S')\le1\}$ that satisfies conditions
i) and ii'), then we can find a new set $S\subset S'$ together with a
collection $\{a_\nu\in\mathbb N|d(\nu,S)\le1\}$ satisfying conditions i) and
ii). Moreover, also conditions 1) and 2) should hold for the set $S$.  

If there is a vertex $\gamma\in S'$ which has only one adjacent vertex, say $\eta$, and 
$w_\Gamma(\gamma)a'_\gamma>a'_\eta$, then we choose 
$S:=\{\gamma\}$ and take $a_\gamma:=a'_\gamma$, $a_\eta:=w_\Gamma(\gamma)a'_\gamma$. 

When this is not the case, we look for chains consisting of vertices $\gamma\in S'$ such that
$\widehat d_\gamma=0$ if $\gamma$ is not an end of the chain. We now take our set $S$ to be any 
maximal chain of this type. Obviously conditions 1) and 2) then hold. 

We will next define the integers $a_\nu$ for all 
$\nu\in|\Gamma|$ with $d(\nu,S)\le1$. To begin with, set $a_\nu:=a'_\nu$ if $\nu\in S$. Then take 
any $\gamma\in S$ and look at the vertices $\eta\sim\gamma$. Note that if there is only one
vertex $\eta$ adjacent to $\gamma$, then we now necessarily have $w_\Gamma(\gamma)a'_\gamma=a'_\eta$.
By the maximality of $S$, $\eta\in S'$ implies $\eta\in S$ so that $a_\eta:=a'_\eta$ will do. 
Thus we may assume $v_\Gamma(\gamma)\ge2$.

Suppose first that $\gamma$ is an end of $S$ such that $\lambda(a'_\eta,\eta)>\xi$ for all $\gamma\sim\eta\notin S$. By assumption
ii') we have
$$
w_\Gamma(\gamma) a_\gamma\ge\sum_{\gamma\sim\eta\in
S}a_\eta+\sum_{\gamma\sim\eta\notin S}a'_\eta.
$$
If the equality holds, we take $a_\eta:=a'_\eta$ for every $\gamma\sim\eta\notin S$.
When this is not the case, we may increase the $a'_\eta$:s, $\eta \notin S$, to find numbers 
$a_\eta$ such that
$$
w_\Gamma(\gamma) a_\gamma=\sum_{\eta\sim\gamma}a_\eta.
$$
For any $\eta\sim\gamma$, $\eta\notin S$, we then have
$\lambda(a_\eta,\eta)\ge\lambda(a'_\eta,\eta)>\xi$.

Otherwise, we can utilize Lemma~\ref{veys} to define the numbers $a_\eta$.
For this we note that if $\gamma$ now is an end of $S$, then necessarily, by
the maximality of $S$, we have $\widehat d_\gamma>0$ and there must be a vertex
$\eta_1\in S$ adjacent to $\gamma$. Furthermore, when $\gamma$ is not an end of 
$S$, there are two vertices $\eta_1, \eta_2 \in S$ adjacent to $\gamma$. 
\end{proof}

\begin{remark}\label{hyppylukukriteerihuomautus}
If $\xi=\xi_F$ where $F$ is any antinef divisor, then it comes out from the proof of Theorem~\ref{hyppylukukriteeri} that we can construct the set 
$S$ in such a way that $S\subset S_F$ and that $a_{\gamma}=f_{\gamma}$ for all $\gamma\in S$. 

\end{remark}

Recall that the first jumping number of $\fa$ is the log canonical threshold 
$$\lct(\fa)=\min_\nu \frac{k_\nu+1}{d_\nu}.$$

\begin{corollary}\label{logcanonical}
Consider the set of those vertices $\gamma\in |\Gamma|$ for which 
$$\lct(\fa)=\frac{k_\gamma+1}{d_\gamma}.$$ This set then satisfies the 
conditions 1) -- 3) of Theorem~\ref{hyppylukukriteeri}.
\end{corollary}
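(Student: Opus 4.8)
The plan is to exhibit the set in question as the support of the first jumping number relative to the zero divisor and then to read off conditions~1)--3) directly. Set $\xi:=\lct(\fa)=\min_\nu\lambda(0,\nu)$ and let $S:=\{\gamma\in|\Gamma|\mid\lambda(0,\gamma)=\xi\}$ be the set of the corollary. Since $\widehat 0_\nu=0\ge0$ for every $\nu$, the divisor $F=0$ is antinef, $\xi_F=\xi$, and $S_F=S$; thus $F=0$ is an antinef divisor attached to $S$ in the sense of Proposition~\ref{parametrisointi}, and for every $\eta\sim S$ one has $\lambda(0,\eta)>\xi$ (because $\lambda(0,\eta)\ge\xi$ always, with equality exactly when $\eta\in S$). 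Note also that, as $\xi$ is the global minimum of $\lambda(0,\cdot)$, we get $\ga_{\gamma,\nu}=d_\nu(\lambda(0,\nu)-\lambda(0,\gamma))\ge0$ for every $\gamma\in S$ and every $\nu\sim\gamma$. (In particular $(S,\{a_\nu:=0\})$ satisfies conditions~i) and~ii') of Theorem~\ref{hyppylukukriteeri}, so only~1)--3) require work.)

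The crux is that $S$ is connected, and I would establish this first. Assume not; then there is a geodesic $\nu_0\sim\nu_1\sim\cdots\sim\nu_r$ in the tree $\Gamma$ with $r\ge2$, endpoints $\nu_0,\nu_r\in S$, and $\nu_1,\dots,\nu_{r-1}\notin S$. Put $v_i:=\lambda(0,\nu_i)$, so $v_0=v_r=\xi$ while $v_i>\xi$ for $0<i<r$. Let $M:=\max_{0\le i\le r}v_i$; since $v_1>\xi$ we have $M>\xi$, so $M$ is attained only at interior indices. Let $c$ be the least index with $v_c=M$; then $1\le c\le r-1$ and $v_{c-1}<v_c=M$, whence $\ga_{\nu_c,\nu_{c-1}}<0$. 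Applying Lemma~\ref{proyenveys}b) at $\gamma=\nu_c$ with $\eta=\nu_{c-1}$ then forces $\ga_{\nu_c,\nu}>0$ for every $\nu\sim\nu_c$ with $\nu\ne\nu_{c-1}$ --- the exceptional alternative in that lemma is excluded since it requires $\ga_{\nu_c,\nu_{c-1}}=0$. In particular $\ga_{\nu_c,\nu_{c+1}}>0$, i.e.\ $v_{c+1}>v_c=M$, contradicting the maximality of $M$. Hence $S$ is connected.

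Next I would treat conditions~1) and~2) together. Let $\gamma\in S$ and suppose it has two neighbours $\eta,\eta'\in S$; then $\ga_{\gamma,\eta}=\ga_{\gamma,\eta'}=0$, and Lemma~\ref{proyenveys}b) (applied at $\gamma$ with this $\eta$: since $\ga_{\gamma,\eta}=0\le0$ while $\ga_{\gamma,\eta'}=0$ is not $>0$) forces the exceptional case, namely $\widehat d_\gamma=0$, $v_\Gamma(\gamma)=2$ and $\{\eta,\eta'\}$ are the only neighbours of $\gamma$. (Equivalently, one may argue from Lemma~\ref{veysequation} together with the bounds $0\le\ga_{\gamma,\eta}<1$ for all $\eta\sim\gamma$ furnished by Lemma~\ref{proyenveys}a).) So every vertex of $S$ has at most two neighbours in $S$; since $S$ is connected and $\Gamma$ is a tree (so $S$ carries no cycle), $S$ is a chain, which is condition~1). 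Moreover a vertex $\gamma\in S$ that is not an end of this chain has exactly two neighbours in $S$, so $\widehat d_\gamma=0$ by the above, which is condition~2).

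Finally, for condition~3) I would argue by contradiction: let $\gamma$ be an end of $S$ that is not a star, i.e.\ $v_\Gamma(\gamma)\le2$, and suppose $\widehat d_\gamma=0$. The hypotheses of Lemma~\ref{ketju} hold for $F=0$ at $\gamma$, since $\widehat 0_\gamma=0\ge0$ and $\lambda(0,\gamma)=\xi\le\lambda(0,\eta)$ for every $\eta\sim\gamma$. Part~a) of that lemma then yields that $\gamma$ has exactly two neighbours and that $\lambda(0,\eta)=\xi$ for both, i.e.\ both neighbours lie in $S$ --- contradicting that $\gamma$ is an end of $S$. Hence every end of $S$ is a star or has $\widehat d_\gamma>0$, which is condition~3). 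The only genuinely delicate step in the whole argument is the connectedness of $S$; everything else is a direct application of the lemmas of Sections~3--4 with the trivial antinef divisor $F=0$, so it is worth recording explicitly at the start that $F=0$ is antinef with $\xi_F=\lct(\fa)$.
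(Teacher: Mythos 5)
Your argument is correct, but it takes a genuinely different and considerably longer route than the paper. The paper's proof is a one-liner: since $f_\gamma>0$ for all $\gamma$ whenever $0\neq F$ is antinef, the zero divisor is the \emph{unique} antinef $F$ with $\xi_F=\lct(\fa)$; Theorem~\ref{hyppylukukriteeri} produces some set $S$ satisfying~1)--3) together with an antinef $F$ with $\xi_F=\lct(\fa)$ and $S=S_F$, and uniqueness forces $F=0$, hence $S=S_0$, which is exactly the set of the corollary. You instead bypass Theorem~\ref{hyppylukukriteeri} entirely and re-derive conditions~1)--3) by hand from the Section~3 lemmata: you prove connectedness of $S$ via a clean max-on-a-geodesic argument using Lemma~\ref{proyenveys}b), then the chain and $\widehat d_\gamma=0$ conditions again via Lemma~\ref{proyenveys}b), and condition~3) via Lemma~\ref{ketju}a). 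What your route buys is a self-contained verification that does not depend on the (fairly involved) proof of Theorem~\ref{hyppylukukriteeri}; what it costs is that you silently reproduce the combinatorial core of that theorem's proof in the special case $F=0$. It is worth noting that the uniqueness observation you make at the start ($F=0$ is antinef with $\xi_0=\lct(\fa)$) is precisely the lever the paper pulls, and that recognizing it renders all of your Lemma-chasing unnecessary. Your connectedness argument also tacitly uses that one can pick $\nu_0,\nu_r$ in distinct components of $S$ at minimal tree-distance to guarantee the interior of the geodesic avoids $S$; this is fine but deserves a word.
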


\begin{proof}Recall first that $f_\gamma>0$ for every $\gamma\in |\Gamma|$ if $0\not=F\in \Lambda$ is an antinef
divisor. The zero divisor $0$ is then the unique antinef divisor $F\in \Lambda$ such that $\lct(\fa)=\xi_F$. 
Therefore the claim follows from Theorem~\ref{hyppylukukriteeri}.
\end{proof}

\begin{remark}This
was observed by Veys in~\cite[Theorem 3.3]{V} in the context of an embedded resolution of a curve. 
\end{remark}

\begin{example}\label{esimerkki1} Conditions 1) and 2) of Theorem~\ref{hyppylukukriteeri} do not hold for $S_F$ for 
an arbitrary antinef divisor $F\in \Lambda$. To see this, consider the configuration of exceptional divisors described
by the dual graph 
\newline
\unitlength=1,2mm
\begin{picture}(70,22)(-19,-11)
\put(20,0){\circle{6}}
\put(35,0){\circle{6}}
\put(50,0){\circle{6}}
\put(23,0){\line(1,0){9}}
\put(38,0){\line(1,0){9}}
\put(20,0){\makebox(0.2,0){$1$}}
\put(35,0){\makebox(0.2,0){$3$}}
\put(50,0){\makebox(0.2,0){$1$}}
\begin{footnotesize}
\put(39,-3.5){\makebox(0.2,0){$E_1$}}
\put(54,-3.5){\makebox(0.2,0){$E_3$}}
\put(24,-3.5){\makebox(0.2,0){$E_2$}}
\end{footnotesize}
\end{picture}
\newline
and the proximity matrix
$$P=\hspace{-3pt}\left[\begin{array}{rrr}1&0&0\\^-1&1&0\\^-1&0&1\end{array}\right].$$
The dual graph is then associated to a minimal principalization of the ideal $\fa=\fp_2\fp_3$.
Now 
$$D = 2E_1 + 3E_2 + 3E_3=\widehat E_2+\widehat E_3\quad\hbox{and}\quad K= E_1 + 2E_2 + 2E_3.$$
Let us take
$$F:= E_1 + E_2 + E_3= \widehat E_1.$$
One now calculates that
$$\lambda(f_1,1)=\frac{3}{2},\lambda(f_2,2)=\lambda(f_3,3)=\frac{4}{3}.$$
Then $\xi_F=\frac{4}{3}$ with $S_F=\{2, 3\}$. In particular, we see that $S_F$ is disconnected. However, if
$$F':= E_1 + E_2 + 2E_3=\widehat E_3,$$
then also $\xi_{F'}=\frac{4}{3}$, but $S_{F'}=\{2\}$. 
We now observe that $S_{F'}$ satisfies the conditions 1) -- 3) of Theorem~\ref{hyppylukukriteeri}.
\end{example}

Theorem~\ref{hyppylukukriteeri} implies that a support of a jumping number always contains star vertices or
vertices corresponding to Rees valuations. In Corollary~\ref{hyppyluvut} we are now going to show the converse: this kind of
vertices always support some jumping number. First we need two lemmata: 

\begin{lemma}\label{läheisyyslemma}Let $\gamma \in |\Gamma|$. If $\nu_1 \prec \cdots \prec\nu_r=\eta$ are points proximate to $\gamma$, 
then $d_\eta>rd_\gamma$.

\end{lemma}

\begin{proof}For every $i=1,\ldots,r$, we have $\nu_i \succ \gamma$ and $\nu_i\succ \nu_{i-1}$, where $\nu_0=\gamma$. 
Using the base change formula (\ref{BC}), we get
$$d_{\nu_i}=d_{\nu_{i-1}}+d_\gamma + d^*_{\nu_i}>d_{\nu_{i-1}}+d_\gamma$$ so that
$$
d_\eta>d_{\nu_{r-1}}+d_\gamma>d_{\nu_{r-2}}+2d_\gamma >\cdots>
d_{\nu_1}+(r-1)d_\gamma>rd_\gamma.$$
\end{proof}

\begin{lemma}\label{epäyhtälölemma}Let $\gamma\in |\Gamma|$.
\begin{itemize}
\item[a)] If $\widehat d_\gamma>0$, then $d_\gamma\ge k_\gamma\widehat d_\gamma$.
\item[b)] If $v_{\Gamma}(\gamma)+\widehat d_\gamma\ge 3$, then $d_\gamma-k_\gamma\ge 2$. 
\end{itemize}
\end{lemma}

\begin{proof}By (\ref{BC}) we have $d^*_\nu=\sum_\mu \widehat d_\mu q_{\mu,\nu}\ge \widehat d_\gamma q_{\gamma,\nu}$ for every $\nu\in |\Gamma|$.
Again by (\ref{BC}), and (\ref{K}), we then obtain
$$
d_\gamma-k_\gamma\widehat d_\gamma=\sum_\nu(d^*_\nu-\widehat d_\gamma)q_{\gamma,\nu}
\ge
\widehat d_\gamma\sum_\nu(q_{\gamma,\nu}-1)q_{\gamma,\nu}\ge0
$$
proving a).

In order to prove b), we first observe that
$d^*_1=d_1=v_1(\fa)\ge v_1(\fp_\gamma)=q_{\gamma,1}\ge1$ and $d^*_1\ge
d^*_\gamma\ge\widehat d_\gamma$. If now $q_{\gamma,1}\ge2$ or if $\widehat
d_\gamma\ge3$, we get \begin{equation*}d_\gamma-k_\gamma =\sum_\nu(d^*_\nu-1)q_{\gamma,\nu}\ge (d^*_1-1)q_{\gamma,1}\ge 2,\end{equation*}
as wanted. 

Consider then the case $q_{\gamma,1}=1$ and $\widehat d_\gamma\le2$. The
formula (\ref{Q}) implies that in this case $\gamma$ is proximate to at 
most one adjacent vertex. The number of adjacent vertices proximate to $\gamma$ 
must then be $v_\Gamma(\gamma)-t$, where $t\in \{0,1\}$. 
By using (3), we get 
$$ 
d^*_\gamma
= \widehat d_\gamma + \sum_{\nu\succ\gamma}d^*_\nu 
\ge \widehat d_\gamma + v_\Gamma(\gamma)-t
\ge 3-t.
$$ 
Assuming $\gamma\neq 1$, i.e., $t=1$, we now obtain
$$d_\gamma-k_\gamma\ge (d_1^* - 1)+ (d_\gamma^*-1) \ge 2.$$
In the case $\gamma=1$ 
we have $t=0$, and thus $d^*_\gamma\ge3$.
So $d_\gamma-k_\gamma\ge d^*_\gamma-1\ge2$.

\end{proof}

We are now ready to prove the promised result:

\begin{corollary}\label{hyppyluvut}Let $\gamma \in |\Gamma|$.
\begin{itemize}
\item[a)]
If $\widehat d_\gamma$ is positive, then 
$$
\frac{n}{d_\gamma}
$$
is a jumping number of $\fa$ with a support $\{\gamma\}$ for any integer $n$
such that $n\widehat d_\gamma>d_\gamma$. In particular, this implies that
$$
1+\frac{n}{d_\gamma}
$$ is a jumping number of $\fa$ with a support $\{\gamma\}$ for every positive integer $n$.

\item[b)] If $v_\Gamma(\gamma)+\widehat d_\gamma\ge 3$, then  
$$
1-\frac{1}{d_\gamma}
$$
is a jumping number of $\fa$ with a support $\{\gamma\}$. This is the case
especially when $\gamma$ is a star.

\end{itemize}
\end{corollary}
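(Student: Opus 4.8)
The strategy is to apply Theorem~\ref{hyppylukukriteeri} with $S = \{\gamma\}$ in each case, so the task reduces to producing, for the prescribed rational number $\xi$, a nonnegative integer $a_\gamma$ with $\lambda(a_\gamma,\gamma) = \xi$ together with nonnegative integers $a_\eta$ for $\eta \sim \gamma$ satisfying conditions i) and ii'). Since $S = \{\gamma\}$ is trivially connected (and a chain), conditions 1) and 3) of the theorem are immediate and only i) and ii') need to be checked; by the last sentence of Theorem~\ref{hyppylukukriteeri} it in fact suffices to verify ii') (which is easier than ii)). So in each case I will first identify $a_\gamma$ from the equation $d_\gamma \xi = a_\gamma + k_\gamma + 1$, then invoke Lemma~\ref{veys} to obtain the $a_\eta$ with $\lambda(a_\eta,\eta) > \xi$, and finally check the inequality $w_\Gamma(\gamma) a_\gamma \ge \sum_{\eta\sim\gamma} a_\eta$, which by Lemma~\ref{veys} can be arranged as an equality — but I must check $a_\gamma$ is a legitimate nonnegative integer and that Lemma~\ref{veys} actually applies in the right ``case''.

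For part a), take $a_\gamma = n - k_\gamma - 1$, so that $\lambda(a_\gamma,\gamma) = n/d_\gamma =: \xi$. I need $a_\gamma \ge 0$, i.e.\ $n \ge k_\gamma + 1$; this follows from the hypothesis $n\widehat d_\gamma > d_\gamma$ together with Lemma~\ref{ep\"ayht\"al\"olemma}~a), which gives $d_\gamma \ge k_\gamma \widehat d_\gamma$, hence $n \widehat d_\gamma > d_\gamma \ge k_\gamma\widehat d_\gamma$, so $n > k_\gamma$, i.e.\ $n \ge k_\gamma+1$. Because $\widehat d_\gamma > 0$, I am in the situation of part 2) of Lemma~\ref{veys}: I can choose the $a_\eta$ with $\lambda(a_\eta,\eta) > \xi$ for \emph{every} $\eta \sim \gamma$ and $\sum_{\eta\sim\gamma} a_\eta = w_\Gamma(\gamma) a_\gamma$. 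This verifies i) and ii), so Theorem~\ref{hyppylukukriteeri} yields that $\xi = n/d_\gamma$ is a jumping number whose support is exactly $\{\gamma\}$. For the ``in particular'' assertion, apply this with $n$ replaced by $n' = n + d_\gamma$ for a positive integer $n$: then $n'\widehat d_\gamma = n\widehat d_\gamma + d_\gamma\widehat d_\gamma \ge n\widehat d_\gamma + d_\gamma > d_\gamma$ trivially (indeed $d_\gamma \widehat d_\gamma \geq d_\gamma$ since $\widehat d_\gamma \geq 1$), so $1 + n/d_\gamma = n'/d_\gamma$ is a jumping number with support $\{\gamma\}$.

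For part b), set $\xi = 1 - 1/d_\gamma = (d_\gamma - 1)/d_\gamma$ and $a_\gamma = d_\gamma - 1 - k_\gamma - 1 + 1 = d_\gamma - k_\gamma - 1$; concretely $d_\gamma\xi = a_\gamma + k_\gamma + 1$ forces $a_\gamma = d_\gamma - 1 - k_\gamma - 1 = d_\gamma - k_\gamma - 2$. Wait --- recompute: $a_\gamma + k_\gamma + 1 = d_\gamma - 1$ gives $a_\gamma = d_\gamma - k_\gamma - 2$, and the hypothesis $v_\Gamma(\gamma) + \widehat d_\gamma \ge 3$ together with Lemma~\ref{ep\"ayht\"al\"olemma}~b) gives $d_\gamma - k_\gamma \ge 2$, hence $a_\gamma \ge 0$. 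Now I must supply the $a_\eta$. If $\widehat d_\gamma > 0$ I again land in part 2) of Lemma~\ref{veys} and argue as before. If $\widehat d_\gamma = 0$, then the hypothesis forces $v_\Gamma(\gamma) \ge 3$, so $\gamma$ is a star with at least three neighbours, and I use part 1) of Lemma~\ref{veys}: I need a neighbour $\eta_1$ and a nonnegative integer $a_{\eta_1}$ with $\lambda(a_{\eta_1},\eta_1) = \xi$, after which the remaining $a_{\eta_j}$ can be chosen with $\lambda(a_{\eta_j},\eta_j) > \xi$ for all $j > 1$. The main obstacle — and the point that needs genuine care — is precisely this: ensuring that the integrality datum ($a_{\eta_1} = d_{\eta_1}\xi - k_{\eta_1} - 1 = \Delta_1$ an integer, with $a_{\eta_1} \ge 0$) can be met for \emph{some} neighbour $\eta_1$, and that then the ``except at most one'' clause of part 1) does not spoil condition i) (it must be that the single possibly-equal neighbour is the already-fixed $\eta_1$, not a new one with $\lambda < \xi$). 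I expect this to be handled by choosing $\eta_1$ to be a neighbour of $\gamma$ that is \emph{not} proximate to $\gamma$ — a vertex along the path from $\gamma$ toward $E_1$ or toward the direction where $d$ is smallest, so that the divisibility works out — invoking Lemma~\ref{l\"aheisyyslemma} and the base-change relations (\ref{P}), (\ref{BC}) to pin down $\Delta_1$. Finally, the ``especially when $\gamma$ is a star'' clause is immediate since $v_\Gamma(\gamma) \ge 2$ and $\widehat d_\gamma \ge 0$ and in fact $\gamma$ a star of valence $2$ with $\widehat d_\gamma \ge 1$ or valence $\ge 3$ both satisfy $v_\Gamma(\gamma) + \widehat d_\gamma \ge 3$; one notes that if $\gamma$ is a star of valence $2$ then $\widehat d_\gamma > 0$ by Lemma~\ref{ketju}~a) (contrapositive), so the hypothesis of b) holds.
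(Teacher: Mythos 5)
Your choice of $a_\gamma$ and the nonnegativity checks via Lemma~\ref{ep\"ayht\"al\"olemma} match the paper. But the main step — producing neighbours $a_\eta$ with $\lambda(a_\eta,\eta)>\xi$ for \emph{every} $\eta\sim\gamma$ — is broken, because you lean on Lemma~\ref{veys}, which cannot deliver what you need here. The general conclusion of Lemma~\ref{veys} only guarantees $\lambda(a_\eta,\eta)\ge\lambda(a_\gamma,\gamma)$ for all but at most one $\eta$, with the exceptional neighbour possibly satisfying $\lambda<\xi$; parts~1) and~2) do not remove this defect, they \emph{presuppose} it: both require the existence of some $\eta_1$ with $\lambda(a_{\eta_1},\eta_1)\le\lambda(a_\gamma,\gamma)$ and then only control the remaining $\eta_j$, $j>1$. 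So if you are genuinely ``in case~2)'' (as you assert for part~a)) or ``in case~1)'' (as you plan for part~b)), the vertex $\eta_1$ has $\lambda(a_{\eta_1},\eta_1)\le\xi$, which either makes $\xi$ wrong as a minimum or puts $\eta_1$ into the support and so destroys the claim that the support is $\{\gamma\}$. In particular your part~b) plan — choose $\eta_1$ with $\lambda(a_{\eta_1},\eta_1)=\xi$ and invoke part~1) — is self-defeating: that very choice places $\eta_1$ in $S$.

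What the paper does instead is bypass Lemma~\ref{veys} and argue from the identity in Lemma~\ref{veyslemma},
$\sum_{j}\lfloor\Delta_j\rfloor+m-2=w_\Gamma(\gamma)a_\gamma-\varphi$ with $\varphi=\sum_j\delta_j+\widehat d_\gamma\lambda(a_\gamma,\gamma)\in\bbN$. Whenever $\varphi\ge2$ one gets $\sum_j(\lfloor\Delta_j\rfloor+1)\le w_\Gamma(\gamma)a_\gamma$, so one may pick integers $a_{\eta_j}\ge\lfloor\Delta_j\rfloor+1>\Delta_j$ with $\sum_j a_{\eta_j}=w_\Gamma(\gamma)a_\gamma$, forcing $\lambda(a_{\eta_j},\eta_j)>\xi$ for \emph{every} $j$. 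The entire content of the proof is then to show $\varphi\ge2$: for a) this is immediate from $n\widehat d_\gamma>d_\gamma$ (giving $\widehat d_\gamma\lambda(a_\gamma,\gamma)>1$, hence the integer $\varphi\ge2$); for b) one computes $\delta_j=\lceil d_{\eta_j}/d_\gamma\rceil-d_{\eta_j}/d_\gamma$ and, using (\ref{P}) and Lemma~\ref{l\"aheisyyslemma}, obtains $\varphi=\sum_j\lceil d_{\eta_j}/d_\gamma\rceil-w_\Gamma(\gamma)+\widehat d_\gamma\ge m-1+\widehat d_\gamma\ge2$. This is the missing idea in your proposal: the inequality $\varphi\ge2$ is what converts ``all but one'' into ``all''. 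Without it your argument does not close.

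A minor additional issue: your justification of the ``especially when $\gamma$ is a star'' remark appeals to Lemma~\ref{ketju}~a) as if it were an unconditional statement about valence-$2$ vertices, but that lemma has the hypothesis that $\xi=\lambda(f_\gamma,\gamma)\le\lambda(f_\eta,\eta)$ for an already-given divisor $F$ with $\widehat f_\gamma\ge0$; it is not a structural constraint on $\Gamma$, and a valence-$2$ vertex with $\widehat d_\gamma=0$ is certainly possible.
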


\begin{proof} 
Suppose that $\{\eta\mid\eta\sim\gamma\}=\{\eta_1,\dots,\eta_m\}$. Let
$a_{\gamma}$ be a nonnegative integer. By Lemma~\ref{veyslemma} we now have
$$
\sum_{j=1}^m(\lfloor\Delta_j \rfloor+1)=w_\Gamma(\gamma)a_\gamma+2-\varphi.
$$
where 
$$\varphi=\sum_{j=1}^m\delta_j+\widehat d_\gamma\lambda(a_\gamma,\gamma)$$
is a nonnegative integer.  
It follows that if $\varphi\ge2$, then we can find nonnegative integers $a_{\eta_j}
\ge \lfloor\Delta_j \rfloor+1
$ for
$j=1,\ldots,m$ such that
$$
\sum_{j=1}^ma_{\eta_j} =w_\Gamma(\gamma)a_\gamma$$
and that 
$$\lambda(a_{\eta_j},\eta_j)\ge \lambda(\lfloor\Delta_j \rfloor+1, \eta_j) >\lambda(\Delta_j, \eta_j)=\lambda(a_\gamma,\gamma).$$
This means that we can use Theorem~\ref{hyppylukukriteeri} to conclude that $\xi=\lambda(a_\gamma,\gamma)$ is a jumping number 
with a support $\{\gamma\}$. Let us now show that in both cases a) and b) we can choose $a_\gamma$ in such a way that
$\varphi\ge 2$.

Consider first the case a). For any integer $n$ such that $n\widehat d_\gamma>d_\gamma$, set
$$a_\gamma=n-k_\gamma-1.$$
By Lemma~\ref{epäyhtälölemma} a) $a_\gamma$ is positive. Moreover
$$
\lambda(a_\gamma,\gamma)=\frac{n}{d_\gamma}.$$
Then $\varphi \ge\left\lceil\widehat
d_\gamma\lambda(a_\gamma,\gamma)\right\rceil\ge2$ as wanted.

Consider then the case b). Set $a_\gamma=d_\gamma-k_\gamma-2$. 
Now Lemma~\ref{epäyhtälölemma} b) implies that $a_\gamma$ is nonnegative. 
Furthermore,
$$
\lambda(a_\gamma,\gamma)=1-\frac{1}{d_\gamma}.
$$
Then 
$$
\Delta_j=d_{\eta_j}\lambda(a_\gamma,\gamma)-
k_{\eta_j}-1=-\frac{d_{\eta_j}}{d_\gamma}+d_{\eta_j}-k_{\eta_j}-1.
$$
for all $j=1,\ldots,m$.
Thus
$$
\delta_j=\Delta_j - \lfloor \Delta_j \rfloor 
=\left\lceil\frac{d_{\eta_j}}{d_\gamma}\right\rceil-\frac{d_{\eta_j}}{d_\gamma}.
$$
By the formula $(\ref{P})$ 
$$w_\Gamma(\gamma)d_\gamma=\sum_{j=1}^md_{\eta_j}+{\widehat d}_\gamma.$$
Hence
$$
\sum_{j=1}^m\delta_j
=\sum_{j=1}^m\left\lceil\frac{d_{\eta_j}}{d_\gamma}\right\rceil-\sum_{j=1}^m\frac{d_{\eta_j}}{d_\gamma}
=\sum_{j=1}^m\left\lceil\frac{d_{\eta_j}}{d_\gamma}\right\rceil-w_\Gamma(\gamma)+\frac{\widehat d_\gamma}{d_\gamma}
$$
so that 
$$
\varphi=\sum_{j=1}^m\delta_j+\widehat d_\gamma\lambda(a_\gamma,\gamma)=
\sum_{j=1}^m\left\lceil\frac{d_{\eta_j}}{d_\gamma}\right\rceil-w_\Gamma(\gamma)+\widehat d_\gamma.
$$
Since $\gamma \sim \eta_j$, we have either $\gamma\prec \eta_j$ or $\eta_j \prec \gamma$. In the
first case we have a maximal chain $\nu_1\prec \cdots\prec \nu_{r_j}=\eta_j$ of points
proximate to $\gamma$. By Lemma~\ref{läheisyyslemma} this 
means that $d_{\eta_j}\ge r_j d_\gamma+1$. This certainly holds also when $\eta_j \prec \gamma$ if we set 
$r_j=0$ in this case.
Therefore we have, for every
$j=1,\dots,m$, 
$$
\left\lceil\frac{d_{\eta_j}}{d_\gamma}\right\rceil\ge1+r_j
\hspace{4pt}\text{ and }\hspace{4pt}
\sum_{j=1}^m r_j
=\#\{\nu\in |\Gamma|\mid \nu \succ \gamma\}=w_\Gamma(\gamma)-1.
$$
It follows that
$$
\sum_{j=1}^m\left\lceil\frac{d_{\eta_j}}{d_\gamma}\right\rceil\ge
m-1+w_\Gamma(\gamma),
$$
and further,
$
\varphi\ge m-1+\widehat d_\gamma\ge2,
$
which proves the claim.
\end{proof}

\begin{remark}In the case of a curve on a smooth surface, it was observed by Smith and Thompson in~\cite[Theorem 3.1]{ST}
that b) of Corollary~\ref{hyppyluvut} holds for any star vertex.
\end{remark}

We will next show how new jumping nunbers can be obtained from a given one. 

\begin{proposition}\label{uudet}
Suppose $\xi$ is a jumping number of $\fa$ with a 
support $S \subset|\Gamma|$. Write $$d=\gcd\{d_\eta|d(\eta,S)\le1\}.$$ 
Then, for any $n\in\mathbb N$,
$$\xi+\frac{n}{d}$$ is also a jumping number of $\fa$ with a support $S$.
\end{proposition}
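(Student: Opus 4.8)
The plan is to apply Theorem~\ref{hyppylukukriteeri} directly. Since $\xi$ is a jumping number with support $S$, there is an antinef divisor $F$ with $\xi=\xi_F$ and $S=S_F$, and by the theorem we may assume $S$ satisfies conditions 1)--3); in particular we have a collection of nonnegative integers $\{a_\eta\mid d(\eta,S)\le1\}$ with $\lambda(a_\eta,\eta)>\xi=\lambda(a_\gamma,\gamma)$ for all $\gamma\in S$, $\eta\sim S$, and $w_\Gamma(\gamma)a_\gamma=\sum_{\nu\sim\gamma}a_\nu$ for every $\gamma\in S$. Fix $n\in\bbN$ and set $d:=\gcd\{d_\eta\mid d(\eta,S)\le1\}$. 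The idea is to produce a new collection $\{a'_\eta\}$ realizing $\xi+\frac nd$ on the same set $S$, by adding to each $a_\eta$ the integer $\frac{n d_\eta}{d}$, which is a well-defined nonnegative integer precisely because $d$ divides every $d_\eta$ with $d(\eta,S)\le1$.

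The key computation is that $\lambda(a_\eta+\tfrac{n d_\eta}{d},\eta)=\lambda(a_\eta,\eta)+\tfrac nd$ for every $\eta$ with $d(\eta,S)\le1$, since $\lambda(a,\eta)=\frac{a+k_\eta+1}{d_\eta}$ is affine in the first argument with slope $1/d_\eta$. Hence for $\gamma\in S$ we get $\lambda(a'_\gamma,\gamma)=\xi+\tfrac nd$, a constant independent of $\gamma\in S$, and for $\eta\sim S$ we get $\lambda(a'_\eta,\eta)=\lambda(a_\eta,\eta)+\tfrac nd>\xi+\tfrac nd$, so condition i) of Theorem~\ref{hyppylukukriteeri} holds with $\xi$ replaced by $\xi+\tfrac nd$. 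For condition ii') (or ii)) I need $w_\Gamma(\gamma)a'_\gamma\ge\sum_{\nu\sim\gamma}a'_\nu$ for $\gamma\in S$; but $w_\Gamma(\gamma)a'_\gamma=w_\Gamma(\gamma)a_\gamma+\tfrac{n}{d}w_\Gamma(\gamma)d_\gamma$ and $\sum_{\nu\sim\gamma}a'_\nu=\sum_{\nu\sim\gamma}a_\nu+\tfrac nd\sum_{\nu\sim\gamma}d_\nu$, and by formula (\ref{P}) we have $w_\Gamma(\gamma)d_\gamma=\sum_{\nu\sim\gamma}d_\nu+\widehat d_\gamma\ge\sum_{\nu\sim\gamma}d_\nu$; combined with $w_\Gamma(\gamma)a_\gamma=\sum_{\nu\sim\gamma}a_\nu$ this yields exactly $w_\Gamma(\gamma)a'_\gamma\ge\sum_{\nu\sim\gamma}a'_\nu$, as required.

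With both conditions verified, Theorem~\ref{hyppylukukriteeri} gives that $\xi+\tfrac nd$ is a jumping number of $\fa$, and moreover that there is an antinef divisor $F'$ with $\xi_{F'}=\xi+\tfrac nd$ and $S_{F'}$ a connected subset of $S$ on which the new collection $\{a'_\eta\}$ is realized. The one point that needs a little care is that the support of the new jumping number is \emph{exactly} $S$, not merely a subset: here one uses that $S$ was chosen to satisfy conditions 1)--3), and that the construction above does not shrink $S$ because condition ii) (equality) already holds for the translated collection at every $\gamma\in S$, so the set $S'\subset S$ produced by the theorem must be all of $S$; alternatively, one invokes Remark~\ref{hyppylukukriteerihuomautus} with the antinef divisor built from $\{a'_\eta\}$ via Lemma~\ref{vieritys}, whose support is $S$ by construction. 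I expect this last bookkeeping — tracking that the support is preserved rather than possibly reduced — to be the only genuinely delicate step; the arithmetic with $\lambda$ and the divisibility by $d$ is routine.
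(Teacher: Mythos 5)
Your core translation argument --- adding $n d_\eta/d$ to each $a_\eta$ and checking conditions i) and ii') of Theorem~\ref{hyppylukukriteeri} for the shifted collection --- is the right computation and is exactly what the paper does. The genuine gap is that you implicitly assume $S$ is \emph{connected}: Theorem~\ref{hyppylukukriteeri} only concerns connected sets of vertices, but a support $S=S_F$ need not be connected (in Example~\ref{esimerkki1} the support $S_F=\{2,3\}$ is disconnected). As written, your argument therefore only covers connected supports. The paper treats the connected case exactly as you do and then reduces the general case to it: write $S=S_1\cup\dots\cup S_r$ with the $S_i$ the connected components, set $d_i:=\gcd\{d_\eta\mid d(\eta,S_i)\le1\}$, observe that $d=\gcd\{d_1,\dots,d_r\}$ so $d$ divides each $d_i$ and hence $\xi+n/d$ can be realized with support $S_i$ for every $i$, and finally glue using Lemma~\ref{yhdiste}, which says that the union of two supports of the same jumping number is again a support. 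Without this reduction the disconnected case is left open.

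A secondary problem is the opening ``by the theorem we may assume $S$ satisfies conditions 1)--3)'': the proposition is about the \emph{given} $S$, and replacing it by a smaller chain satisfying those conditions would prove a different statement. Likewise you cannot insist on equality in ii) for the given $S=S_F$; what one knows (taking $a_\nu=f_\nu$) is only ii'), since $\widehat f_\gamma\ge0$ rather than $\widehat f_\gamma=0$. Fortunately your computation goes through verbatim with ii'), so this is cosmetic. The final paragraph of your proposal, worrying that the support might shrink, is also unnecessary: once i) and ii') hold for the translated collection on a connected $S$, Theorem~\ref{hyppylukukriteeri} already asserts the existence of an antinef divisor $F$ with $\xi_F=\xi+n/d$ and $S_F=S$ exactly (via Lemma~\ref{vieritys}), with no appeal to conditions 1)--3) or to equality in ii).
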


\begin{proof}

Suppose first that $S$ is connected.
Let $\{a_\eta|d(\eta,S)\le1\}$ be a collection of nonnegative integers
satisfying the conditions of Theorem~\ref{hyppylukukriteeri}. For every 
vertex $\eta\in |\Gamma|$ with $d(\eta,S)\le1$,
write 
$$
b_\eta:=\frac{d_\eta}{d}.
$$
Clearly, 
$$
\lambda(a_\eta+nb_\eta,\eta)=\lambda(a_\eta,\eta)+\frac{n}{d}
$$
for any $\eta\in |\Gamma|$ and $n\in\bbN$. Hence
$$
\lambda(a_\eta+nb_\eta,\eta)>\xi+\frac{n}{d}=\lambda(a_\gamma+nb_\gamma,\gamma)
$$
for every $\gamma\in S$ and $\eta\sim S$. By the proximity inequality for $\fa$ 
$$
w_\gamma b_\gamma d=
w_\gamma d_\gamma\ge\sum_{\nu\sim\gamma}d_\nu
=d\sum_{\nu\sim\gamma}b_\nu.
$$
We thus see that 
$$
w_\gamma (a_\gamma+nb_\gamma)\ge\sum_{\nu\sim\gamma}(a_\nu+nb_\nu).
$$
for every $\gamma\in S$. The claim then results from Theorem~\ref{hyppylukukriteeri}. 

Consider then the general case. Let $S_1,\dots,S_r$ be the connected components of $S$. Write
$$d_i:=\gcd\{d_\eta|d(\eta,S_i)\le1\}\quad (i=1,\ldots,r).$$ Then clearly $d=\gcd\{d_1,\dots,d_r\}$. 
By the preceeding case we now know that $$\xi+\frac{n}{d}$$ is a jumping number with a support $S_i$ 
for every $i=1,\dots,r$. Then the claim follows from the following Lemma~\ref{yhdiste}:

\end{proof}

\begin{lemma}\label{yhdiste}If $S_1,S_2\subset |\Gamma|$ are supports of a jumping number $\xi$, then so is $S_1\cup S_2$. 
\end{lemma}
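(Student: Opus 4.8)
The plan is to show that if $\xi$ is a jumping number with support $S_1$ and also with support $S_2$, then we can produce a single antinef divisor whose support (in the sense of $S_F$) equals $S_1\cup S_2$. By Theorem~\ref{hyppylukukriteeri} there are antinef divisors $F_1$ and $F_2$ with $\xi=\xi_{F_1}=\xi_{F_2}$ and $S_{F_i}=S_i$. The natural candidate is to take $F:=\max(F_1,F_2)$ coefficientwise, or perhaps its antinef closure $(\max(F_1,F_2))^\sim$; the point is to check that for this $F$ we still have $\xi_F=\xi$ and that the vertices achieving the minimum in $\min_\nu\lambda(f_\nu,\nu)$ are exactly those of $S_1\cup S_2$.

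First I would recall that $\lambda(f_\nu,\nu)\ge\xi$ for all $\nu$ precisely when $F\ge$ the divisor forcing $\lfloor\xi D\rfloor$-behaviour; more concretely, if $\fb_i$ is the complete ideal corresponding to $F_i$, then by Proposition~\ref{parametrisointi} we have $\xi=\inf\{c\mid\cJ(\fa^c)\nsupseteq\fb_i\}$. Taking $\fb:=\fb_1\cap\fb_2$, which corresponds to the antinef closure of $\max(F_1,F_2)$ (since intersection of complete ideals corresponds to the componentwise max of antinef divisors, then closure), one gets $\cJ(\fa^c)\supseteq\fb$ iff $\cJ(\fa^c)\supseteq\fb_1$ and $\cJ(\fa^c)\supseteq\fb_2$, hence $\xi=\inf\{c\mid\cJ(\fa^c)\nsupseteq\fb\}$ as well; so $\xi=\xi_F$ where $F$ is the antinef divisor corresponding to $\fb$. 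Alternatively, and perhaps more cleanly at the level of the combinatorics already set up, I would argue directly: for the antinef closure $F$ of $G:=\max(F_1,F_2)$ we have $f_\nu\ge g_\nu\ge f^{(i)}_\nu$ for each $i$, so $\lambda(f_\nu,\nu)\ge\lambda(f^{(i)}_\nu,\nu)\ge\xi$ for all $\nu$, giving $\xi_F\ge\xi$; and since taking antinef closure does not change global sections, $\Gamma(X,\cO_X(-F))=\Gamma(X,\cO_X(-G))=\fb_1\cap\fb_2\subseteq\fb_i$, whence by the ``Moreover'' part of Proposition~\ref{parametrisointi} applied to $F_i$ we get $\cJ(\fa^{\xi-\varepsilon})\supseteq\fb_i\supseteq\fb$ fails... — here I must be careful about the direction of the inequality, so I would instead invoke that $\xi_F\le\xi_{F_i}$ whenever $F\ge F_i$ and both are antinef, which follows since $\lambda(f_\nu,\nu)$ restricted to the vertices of $S_i$ only increases but the minimum over all $\nu$ can only decrease — no. The correct monotonicity statement to nail down is: if $F\ge F'$ are antinef then $\xi_F\ge\xi_{F'}$ is false in general; rather one uses the ideal-theoretic characterization, so I will route the argument through $\fb=\fb_1\cap\fb_2$ and Proposition~\ref{parametrisointi}.

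Once $\xi=\xi_F$ is established for $F$ corresponding to $\fb_1\cap\fb_2$, it remains to identify $S_F$. For $\nu\in S_1$: since $f^{(1)}_\nu\le f_\nu$ and $\lambda(f^{(1)}_\nu,\nu)=\xi$ while $\lambda(f_\nu,\nu)\ge\xi_F=\xi$, I need $f_\nu=f^{(1)}_\nu$, i.e. that the max ``did not move'' the coefficient at a vertex of $S_1$, and likewise that the antinef closure did not increase it either. This is the crux. The key observation is that at a vertex $\gamma\in S_1$ we have $\lambda(f^{(1)}_\gamma,\gamma)=\xi$, which is the \emph{minimum} value; if the closure raised $f_\gamma$ above $f^{(1)}_\gamma$ then $\lambda(f_\gamma,\gamma)>\xi$, and I must show this is incompatible with the proximity relations forcing the minimum of $\lambda$ over $S_1\cup S_2$ to still be $\xi$ — which we already know since $\xi_F=\xi$. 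So in fact the argument is: $\xi_F=\xi$ means $\min_\nu\lambda(f_\nu,\nu)=\xi$, so $S_F=\{\nu\mid\lambda(f_\nu,\nu)=\xi\}$ is nonempty; and for $\nu\notin S_1\cup S_2$ we have $\lambda(f^{(i)}_\nu,\nu)>\xi$ for both $i$, hence $f^{(i)}_\nu$ is large enough that even... — this direction ($S_F\subseteq S_1\cup S_2$) again needs care because the closure could conceivably create a new minimum-achieving vertex outside $S_1\cup S_2$, but it cannot \emph{lower} $\lambda$ below $\xi$, and a vertex outside $S_1\cup S_2$ had $\lambda>\xi$ with the smaller coefficients $f^{(i)}$, and larger coefficient only increases $\lambda$ at that vertex — so $\lambda(f_\nu,\nu)\ge\lambda(\max(f^{(1)}_\nu,f^{(2)}_\nu),\nu)$, and after closure $f_\nu\ge\max(f^{(1)}_\nu,f^{(2)}_\nu)$ so $\lambda(f_\nu,\nu)>\xi$, giving $S_F\subseteq S_1\cup S_2$. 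For the reverse inclusion $S_1\cup S_2\subseteq S_F$, at $\gamma\in S_1$ I claim $f_\gamma=f^{(1)}_\gamma$: indeed $\max(f^{(1)}_\gamma,f^{(2)}_\gamma)\ge f^{(1)}_\gamma$, and if it were strictly greater then $\lambda$ at $\gamma$ would exceed $\xi$; but I actually want to conclude the opposite. Here I expect the main obstacle: controlling the effect of the antinef closure. The cleanest fix is probably to not use $\max$ at all, but instead to apply Remark~\ref{hyppylukukriteerihuomautus} together with the construction in Theorem~\ref{hyppylukukriteeri}: pick connected components and reglue. Concretely, I would prove the lemma by induction on the number of connected components and reduce, via Theorem~\ref{hyppylukukriteeri}'s ``Moreover'' clause, to merging two \emph{connected} sets $S_1,S_2$; if they are disjoint and non-adjacent one checks directly that the ``union'' data $\{a_\nu\}$ patch together to satisfy conditions i), ii') of the theorem (choosing, on the overlap of neighbourhoods, the larger prescribed value and then invoking Lemma~\ref{veys}/Lemma~\ref{vieritys} to extend), yielding an antinef $F$ with $S_F=S_1\cup S_2$; if $S_1\cap S_2\ne\emptyset$ or they are adjacent, their union is already connected and one applies Theorem~\ref{hyppylukukriteeri} directly to the combined data. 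The delicate point throughout is verifying condition i) — that $\lambda(a_\eta,\eta)>\xi$ for neighbours $\eta$ of the union — when a neighbour of $S_1$ happens to lie in $S_2$ (then it is not a neighbour of the union at all, so the condition is vacuous there) versus when it is a genuine outside neighbour of both, where the two prescriptions must be reconciled by taking the max and re-running Lemma~\ref{veys}.

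Thus the proof I would write: reduce to two sets by induction; handle the connected-union case by a direct application of Theorem~\ref{hyppylukukriteeri} to merged combinatorial data (using Remark~\ref{hyppylukukriteerihuomautus} to get the data from $F_1,F_2$ with $a_\gamma=f^{(i)}_\gamma$ on $S_i$); and handle the disjoint non-adjacent case by the same merging, where the only work is checking i) and ii') — ii') holds at each $\gamma\in S_i$ because it held in the original data and the neighbours' prescribed values only increase (we take maxima and extend via Lemma~\ref{veys}), while i) holds because outside neighbours keep $\lambda>\xi$ under the increase. I anticipate that the bookkeeping for a vertex that is simultaneously adjacent to $S_1$ and to $S_2$ but in neither, or adjacent to one and inside the other, is where all the care goes, and I would isolate that as the single nontrivial verification.
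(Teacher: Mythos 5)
Your instinct to combine $F_1$ and $F_2$ coefficientwise is right, but you chose the wrong operation: the paper's proof takes the coordinatewise \emph{minimum} $f_\nu:=\min\{f_{1,\nu},f_{2,\nu}\}$, not the maximum. This choice resolves all the difficulties you ran into. The minimum of two antinef divisors is automatically antinef: for each vertex $\gamma$, if the minimum at $\gamma$ is achieved by $F_i$, then
$$\sum_{\eta\sim\gamma} f_\eta \le \sum_{\eta\sim\gamma} f_{i,\eta} \le w_\Gamma(\gamma) f_{i,\gamma} = w_\Gamma(\gamma) f_\gamma,$$
so no antinef closure is needed. Moreover, since $f_\nu$ equals one of $f_{1,\nu},f_{2,\nu}$, we get $\lambda(f_\nu,\nu)=\min_i\lambda(f_{i,\nu},\nu)\ge\xi$ with equality exactly when one of $F_1,F_2$ already achieved $\xi$ at $\nu$, i.e.\ exactly when $\nu\in S_1\cup S_2$. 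That is the whole proof.

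You correctly diagnosed why $\max(F_1,F_2)$ fails: its closure can push $\lambda$ strictly above $\xi$ at vertices of $S_1\cup S_2$, destroying the inclusion $S_1\cup S_2\subseteq S_F$. But instead of noticing that flipping to $\min$ fixes both antinefness and the identification of $S_F$ simultaneously, you pivoted to an induction on connected components via Theorem~\ref{hyppylukukriteeri} and Lemma~\ref{veys}/Lemma~\ref{vieritys}. That route is not carried out; the ``bookkeeping for a vertex simultaneously adjacent to $S_1$ and $S_2$'' and the reconciliation of the two prescriptions on shared neighbours are real obstructions, not routine verifications, and as written there is no argument for condition i) when a neighbour of $S_1$ lies strictly between the two components with a prescribed value from $F_2$ that conflicts. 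So the proposal, as it stands, has a genuine gap, and the missing idea is simply to replace $\max$ by $\min$.
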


\begin{proof} Let $F_1, F_2\in \Lambda$ be antinef divisors such that
$S_1=S_{F_1}$ and $S_2=S_{F_2}$. Let us define a divisor $F$ by setting
$$f_\nu:=\min\{f_{1,\nu},f_{2,\nu}\}.$$
Since $F_1$ and $F_2$ are both antinef, we see that
$$\sum_{\eta\sim\gamma}f_\eta\le
\sum_{\eta\sim\gamma}f_{i,\eta}\le
w_\Gamma(\gamma)f_{i,\gamma}
$$ for any $\gamma\in|\Gamma|$ and $i=1,2$, and further,
$$
\sum_{\eta\sim\gamma}f_\eta\le w_\Gamma(\gamma)f_\gamma.
$$
Hence $F$ is antinef, too. Clearly, $\lambda(f_\nu,\nu)\ge\xi$, where the
equality holds if and only if $\nu\in S_1\cup S_2$. Therefore $S_1\cup S_2=S_F$, which proves
the claim.
\end{proof}

Tucker introduced in \cite{T} the notions of contibution and critical contribution of a jumping number by a divisor. The contribution of
a prime divisor had earlier been defined by Smith and Thompson in \cite{ST}. Consider a reduced subdivisor $$G=E_{\gamma_1}+\ldots +E_{\gamma_r}\le D.$$ 
One first says that a positive rational number $\xi$ is a \textit{candidate jumping number} for $G$ if $\xi d_\gamma$ is an integer for all $\gamma \in \{\gamma_1,\ldots,\gamma_r\}$. The divisor $G$ is then said to \textit{contribute} the jumping number $\xi$ if $\xi$ is a candidate jumping number for $G$ and $$\cJ(\fa^\xi) \subsetneq \Gamma(X,\cO_X(G+K-\lfloor \xi D\rfloor)).$$ Finally, a contribution is said to be \textit{critical} if, in addition, no proper subdivisor of G contributes $\xi$, i.~e., $$\cJ(\fa^\xi) = \Gamma(X,\cO_X(G'+K-\lfloor\xi D\rfloor))$$
for all divisors $0\le G'< G$.

\begin{proposition}\label{kriittinenlemma}
If a divisor $G$ critically contributes $\xi$, then $\xi=\xi_F$ for an antinef divisor $F$ with $S_F=|G|$. 
\end{proposition}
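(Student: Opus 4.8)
The plan is to translate the critical contribution hypothesis into the divisorial language of Proposition~\ref{parametrisointi} and then produce the antinef divisor $F$ whose support $S_F$ equals $|G|$. Write $G=E_{\gamma_1}+\dots+E_{\gamma_r}$, so $|G|=\{\gamma_1,\dots,\gamma_r\}$, and recall that $\xi$ being a candidate jumping number for $G$ means $\xi d_\gamma\in\mathbb Z$ for every $\gamma\in|G|$. The strict inclusion $\cJ(\fa^\xi)\subsetneq\Gamma(X,\cO_X(G+K-\lfloor\xi D\rfloor))$ says there is a global section $h$ of $\cO_X(G+K-\lfloor\xi D\rfloor)$ that is not in $\cJ(\fa^\xi)=\Gamma(X,\cO_X(K-\lfloor\xi D\rfloor))$; equivalently, the divisor of $h$ (pulled back to $X$) satisfies $\operatorname{div}(h)+G+K-\lfloor\xi D\rfloor\ge 0$ but $\operatorname{div}(h)+K-\lfloor\xi D\rfloor\not\ge 0$. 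The criticality says that for each proper subdivisor $0\le G'<G$ there is \emph{no} such $h$, i.e. $\cJ(\fa^\xi)=\Gamma(X,\cO_X(G'+K-\lfloor\xi D\rfloor))$.

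Next I would extract from $h$ an effective divisor $F$ in $\Lambda$. Namely, let $F$ be defined by $f_\gamma:=(\operatorname{div} h)_\gamma+g_\gamma+k_\gamma-\lfloor\xi d_\gamma\rfloor$ for the coefficients along the $E_\gamma$; the condition $\operatorname{div}(h)+G+K-\lfloor\xi D\rfloor\ge 0$ says $F\ge 0$, and the failure of $\operatorname{div}(h)+K-\lfloor\xi D\rfloor\ge0$ forces $f_\gamma=0$ for at least one $\gamma\in|G|$, which by the relation $f_\gamma\ge g_\gamma+(\text{stuff})$ will actually pin those vanishing coordinates to lie in $|G|$. One then replaces $F$ by its antinef closure $F^{\sim}$ (the minimal antinef divisor $\ge F$); since passing to the antinef closure does not change the global sections $\Gamma(X,\cO_X(-F))$, and since the coefficient of $F^\sim$ can only go up, we keep $F^\sim\ge 0$ and need to check the coordinates on $|G|$ did not move. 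Here the fact that $h\notin\cJ(\fa^\xi)$ but $h\in\cJ$ for every proper subdivisor is exactly what guarantees that the antinef closure leaves the coefficients on $|G|$ at the minimal values $g_\gamma+k_\gamma-\lfloor\xi d_\gamma\rfloor$ and produces $\lambda(f_\gamma,\gamma)=\xi$ precisely on $|G|$.

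Concretely, after constructing the antinef $F$ I would verify the two things that identify $\xi$ and $S_F$. For $\xi=\xi_F$: by Proposition~\ref{parametrisointi}, $\xi_F=\min_\nu\lambda(f_\nu,\nu)$; the inequality $\xi_F\le\xi$ follows from $f_\gamma\le g_\gamma+k_\gamma-\lfloor\xi d_\gamma\rfloor$ on some $\gamma\in|G|$ with $g_\gamma=1$, giving $\lambda(f_\gamma,\gamma)\le \frac{1+k_\gamma+k_\gamma'-\lfloor\xi d_\gamma\rfloor}{d_\gamma}$... more cleanly, from $f_\gamma+k_\gamma+1\le d_\gamma\xi$ (using $\xi d_\gamma\in\mathbb Z$ and $g_\gamma=1$), so $\lambda(f_\gamma,\gamma)\le\xi$; the reverse inequality $\xi_F\ge\xi$ is where the criticality enters, since if some $\lambda(f_\nu,\nu)<\xi$ one produces a section witnessing a contribution of a proper subdivisor, contradicting criticality. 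For $S_F=|G|$: the vertices achieving the minimum are exactly those $\gamma$ with $f_\gamma+k_\gamma+1=\xi d_\gamma$, and the bookkeeping above shows these are precisely the $\gamma\in|G|$ — an inclusion $S_F\supseteq|G|$ from the explicit formula for $f_\gamma$ there, and $S_F\subseteq|G|$ again from criticality (a vertex outside $|G|$ in $S_F$ would let us enlarge $G$).

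The main obstacle I anticipate is the bookkeeping in the antinef-closure step: one must show that forming $F^\sim$ does not disturb the coordinates on $|G|$ (so that $\lambda(f_\gamma,\gamma)$ stays equal to $\xi$ there) and does not create a new vertex with $\lambda<\xi$. This is really a statement that the ``defect'' forced by the proximity inequalities propagates only \emph{away} from $|G|$, and it is exactly dual to what Lemma~\ref{vieritys} and Lemma~\ref{veys} do in the other direction; I expect the cleanest route is to characterize $F^\sim$ via global sections, $\Gamma(X,\cO_X(-F^\sim))=\Gamma(X,\cO_X(-F))$, and play the resulting ideal against $\cJ(\fa^\xi)$ and against the candidate sections for subdivisors $G'<G$, using criticality at each step to rule out any downward jump. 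Everything else — the translation between sections of $\cO_X(\,\cdot\,)$ and coefficient inequalities in $\Lambda$, and the arithmetic with $\lambda$ and $\lfloor\xi D\rfloor$ — is routine given Proposition~\ref{parametrisointi} and the base-change formulas~(\ref{BC}), (\ref{P}).
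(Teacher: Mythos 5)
Your overall plan---translate critical contribution into the language of Proposition~\ref{parametrisointi}, produce an antinef $F$ from $\lfloor\xi D\rfloor-K-G$, and play criticality against proper subdivisors---is in the right circle of ideas, but the execution has a real gap and also mis-allocates where criticality is actually needed.

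First, the detour through a global section $h$ is an unnecessary complication that makes the bookkeeping harder, not easier. The paper simply sets $F:=\bigl(\lfloor\xi D\rfloor-K-G\bigr)^\sim$ directly. Then $\fb:=\Gamma(X,\cO_X(-F))=\Gamma\bigl(X,\cO_X(G+K-\lfloor\xi D\rfloor)\bigr)$ by the defining property of the antinef closure, and since $\xi$ is a candidate jumping number for $G$ one has $G+K-\lfloor\xi D\rfloor\le K-\lfloor(\xi-\epsilon)D\rfloor$ for small $\epsilon>0$, whence $\fb\subset\cJ(\fa^{\xi-\epsilon})$. Together with $\cJ(\fa^\xi)\subsetneq\fb$ (which is exactly \emph{contribution}, not criticality), Proposition~\ref{parametrisointi} gives $\xi=\xi_F$. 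So $\xi=\xi_F$ never uses criticality at all; your statement that ``the reverse inequality $\xi_F\ge\xi$ is where the criticality enters'' is incorrect---that inequality comes from the candidate-number condition alone.

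Second, you have the two inclusions of $S_F=|G|$ backwards. The inclusion $S_F\subseteq|G|$ is the one that follows from the construction without criticality: because $\lfloor\xi D\rfloor-K-G\le F$, for $\gamma\notin|G|$ you get $f_\gamma\ge\lfloor\xi d_\gamma\rfloor-k_\gamma>\xi d_\gamma-k_\gamma-1$, so $\lambda(f_\gamma,\gamma)>\xi$. The inclusion $|G|\subseteq S_F$ is where criticality enters: if some $\gamma\in|G|$ had $f_\gamma\ge\xi d_\gamma-k_\gamma$, then with $G':=G-E_\gamma$ one has $\lfloor\xi D\rfloor-K-G\le\lfloor\xi D\rfloor-K-G'\le F$, forcing $(\lfloor\xi D\rfloor-K-G')^\sim=F$ and hence $\cJ(\fa^\xi)\subsetneq\Gamma\bigl(X,\cO_X(G'+K-\lfloor\xi D\rfloor)\bigr)$, contradicting criticality. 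This is also precisely what resolves the ``main obstacle'' you flag about the antinef closure: one never needs to track where the closure pushes coefficients; the one-sided bound $\lfloor\xi D\rfloor-K-G\le F$, together with $\lambda(f_\gamma,\gamma)\ge\xi_F=\xi$ from the first part, pins down everything. You should rework the argument along these lines rather than via a witnessing section $h$ and an unanalyzed closure step.
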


\begin{proof}
Set $F:=(\left\lfloor\xi D \rfloor-K-G\right\rfloor)^\sim$. 
Because $\xi$ is a candidate jumping number for $G$, we have 
$$G+K-\lfloor \xi D\rfloor \le K-\lfloor(\xi-\epsilon)D\rfloor$$
for small enough $\epsilon>0$. Let $\fb=\Gamma(X,\cO(-F))$ be the complete ideal associated to $F$. By
the above we get $\fb \subset \cJ(\fa^{\xi-\epsilon})$. Clearly $\cJ(\fa^\xi) 
\subset \fb$, but as $G$ is contributing, we must have $\cJ(\fa^\xi)\not=\fb$. 
It then follows from Proposition~\ref{parametrisointi} that $\xi=\xi_F$. Let us now
show that $\gamma\in |G|$ is equivalent to $f_\gamma=\xi d_\gamma-k_\gamma-1$. 
Since $\xi=\xi_F$, we have $$f_\gamma\ge  \xi d_\gamma-k_\gamma-1$$ 
for all $\gamma\in |\Gamma|$. Now $\lfloor\xi D \rfloor-K-G \le F$. For $\gamma\not\in |G|$, this means that 
$$f_\gamma \ge \lfloor \xi d_\gamma \rfloor -k_\gamma> \xi d_\gamma -k_\gamma-1.$$
Suppose that we would have $f_\gamma\ge \xi d_\gamma-k_\gamma$ for some $\gamma\in |G|$. Write $G':=G-E_\gamma$. 
Then $G'$ is a proper subdivisor of $G$ and 
$$
\lfloor \xi D \rfloor -K-G\le
\lfloor \xi D \rfloor-K-G'\le F
=(\lfloor \xi D \rfloor-K-G)^\sim
$$
implying that $(\lfloor \xi D \rfloor -K-G')^\sim=F$. But then $$\cJ(\fa^\xi) \subsetneq \Gamma(X,\cO(K+G'-\lfloor \xi D \rfloor ))$$
which is impossible, because $G$ critically contributes $\xi$.
\end{proof}

\begin{lemma}\label{kontribuutiolemma}Suppose that $F$ is an antinef divisor such that $\xi=\xi_F$. Then
$$G:=\sum_{\gamma \in S_F}E_{\gamma}$$ is a contributing divisor for $\xi$.
\end{lemma}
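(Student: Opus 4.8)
The goal is to show that $G=\sum_{\gamma\in S_F}E_\gamma$ contributes $\xi$, i.e. that $\xi$ is a candidate jumping number for $G$ and that $\cJ(\fa^\xi)\subsetneq\Gamma(X,\cO_X(G+K-\lfloor\xi D\rfloor))$. The first part is immediate: for $\gamma\in S_F$ we have $\lambda(f_\gamma,\gamma)=\xi$, so $f_\gamma+k_\gamma+1=\xi d_\gamma$, hence $\xi d_\gamma\in\bbZ$ for every $\gamma\in S_F$, which is exactly the candidate condition for $G$. For the strict inclusion, the natural strategy is to compare the two sheaves $\cO_X(K-\lfloor\xi D\rfloor)$ and $\cO_X(G+K-\lfloor\xi D\rfloor)$ through the divisor $F$, using that $\fb=\Gamma(X,\cO_X(-F))$ is squeezed between them.

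First I would record the pointwise meaning of $\gamma\in S_F$. Since $\xi=\xi_F=\min_\nu\lambda(f_\nu,\nu)$, for every $\nu$ we have $f_\nu\ge\xi d_\nu-k_\nu-1$, with equality precisely when $\nu\in S_F$. Rewriting, $\xi d_\nu-k_\nu-1\le f_\nu$ for all $\nu$, so $\lfloor\xi D\rfloor-K-\bone\le F$ where $\bone:=\sum_\nu E_\nu$ — but more to the point, for $\nu\notin S_F$ we get $f_\nu\ge\xi d_\nu-k_\nu-1+1>\lfloor\xi d_\nu\rfloor-k_\nu-1$ is not quite what I want; rather the correct comparison is $\lfloor\xi D\rfloor-K-G\le F$: for $\gamma\in S_F=|G|$ the $\gamma$-coefficient of the left side is $\xi d_\gamma-k_\gamma-1=f_\gamma$, and for $\nu\notin|G|$ it is $\lfloor\xi d_\nu\rfloor-k_\nu\le f_\nu$ since $f_\nu>\xi d_\nu-k_\nu-1$ forces $f_\nu\ge\lfloor\xi d_\nu\rfloor-k_\nu$ whenever $\xi d_\nu\notin\bbZ$, while if $\xi d_\nu\in\bbZ$ then $f_\nu\ge\xi d_\nu-k_\nu$ as well (because the inequality $f_\nu\ge\xi d_\nu-k_\nu-1$ is strict). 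Hence $\lfloor\xi D\rfloor-K-G\le F$, which gives
$$\fb=\Gamma(X,\cO_X(-F))\subset\Gamma(X,\cO_X(G+K-\lfloor\xi D\rfloor)).$$
On the other hand $\cJ(\fa^\xi)=\Gamma(X,\cO_X(K-\lfloor\xi D\rfloor))\subset\fb$ trivially since $K-\lfloor\xi D\rfloor\le G+K-\lfloor\xi D\rfloor$ is not the containment I need here — instead I use $K-\lfloor\xi D\rfloor=-(\lfloor\xi D\rfloor-K)$ and note $\lfloor\xi D\rfloor-K\ge\lfloor\xi D\rfloor-K-G$, whose antinef closure is $\le F$ after taking closures, giving $\cJ(\fa^\xi)\subset\fb$.

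The crux is then to show $\cJ(\fa^\xi)\ne\fb$, equivalently $\cJ(\fa^\xi)\subsetneq\Gamma(X,\cO_X(G+K-\lfloor\xi D\rfloor))$. This is where Proposition~\ref{parametrisointi} does the work: it says that if $\fb$ is the complete ideal attached to an antinef divisor $F$ and $\xi=\xi_F$, then $\xi=\inf\{c\in\bbQ_{>0}\mid\cJ(\fa^c)\nsupseteq\fb\}$. In particular $\fb\subset\cJ(\fa^{\xi-\epsilon})$ for small $\epsilon>0$ but $\fb\nsubseteq\cJ(\fa^\xi)$; combined with $\cJ(\fa^\xi)\subset\fb$ this yields $\cJ(\fa^\xi)\subsetneq\fb\subset\Gamma(X,\cO_X(G+K-\lfloor\xi D\rfloor))$, which is exactly the contribution inequality. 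I should double-check that the $F$ in the statement — an arbitrary antinef divisor with $\xi=\xi_F$ — is indeed the antinef closure of $\lfloor\xi D\rfloor-K-G$; if not, I would instead set $F':=(\lfloor\xi D\rfloor-K-G)^\sim$, argue as in Proposition~\ref{kriittinenlemma} that $\xi=\xi_{F'}$ and that $|G|\subset S_{F'}\subset S_F$, and run the same argument with $F'$, noting that $\Gamma(X,\cO_X(-F'))\subset\Gamma(X,\cO_X(G+K-\lfloor\xi D\rfloor))$ and $\cJ(\fa^\xi)\subsetneq\Gamma(X,\cO_X(-F'))$.

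\textbf{Main obstacle.} The delicate point is the bookkeeping showing $\lfloor\xi D\rfloor-K-G\le F$ at vertices outside $S_F$, and more importantly identifying the right antinef divisor to feed into Proposition~\ref{parametrisointi}: the statement gives us \emph{some} antinef $F$ with $\xi=\xi_F$, but the cleanest route uses the specific closure $(\lfloor\xi D\rfloor-K-G)^\sim$, so the plan's real content is verifying that $S_F\supset|G|$ guarantees $\Gamma(X,\cO_X(-F))\subset\Gamma(X,\cO_X(G+K-\lfloor\xi D\rfloor))$ and that the strict jump at $\xi$ survives. Everything else is formal manipulation of divisors and the dictionary between antinef divisors and complete ideals.
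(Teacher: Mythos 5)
Your proposal is essentially the paper's proof: the key observations (that $\xi$ is a candidate jumping number because $\xi d_\gamma = f_\gamma + k_\gamma + 1 \in \bbZ$ for $\gamma \in S_F$; that $\lfloor \xi D\rfloor - K - G \le F$, i.e.\ $-F \le G + K - \lfloor\xi D\rfloor$, hence $\fb \subset \Gamma(X,\cO_X(G+K-\lfloor\xi D\rfloor))$; and that Proposition~\ref{parametrisointi} forbids $\fb \subset \cJ(\fa^\xi)$) are exactly the ones the paper uses, and the bookkeeping at vertices outside $S_F$ is carried out the same way.

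There is, however, one flawed step: your claim that $\cJ(\fa^\xi)\subset\fb$. Your justification ("$\lfloor\xi D\rfloor - K \ge \lfloor\xi D\rfloor - K - G$, whose antinef closure is $\le F$, giving $\cJ(\fa^\xi)\subset\fb$") does not yield the stated containment. From $(\lfloor\xi D\rfloor-K-G)^\sim \le F$ you get $\fb \subset \Gamma(X,\cO_X(G+K-\lfloor\xi D\rfloor))$, and from $\lfloor\xi D\rfloor - K \ge \lfloor\xi D\rfloor - K - G$ you get $\cJ(\fa^\xi) \subset \Gamma(X,\cO_X(G+K-\lfloor\xi D\rfloor))$; both ideals sit inside the same $\Gamma(X,\cO_X(G+K-\lfloor\xi D\rfloor))$, but this gives no comparison between $\cJ(\fa^\xi)$ and $\fb$ themselves. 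Note also that on $S_F$ the coefficients of $\lfloor\xi D\rfloor - K$ exceed those of $F$ (by $1$), so $\lfloor\xi D\rfloor - K \le F$ fails, and there is no obvious monotonicity to rescue the claim.

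Fortunately, the containment $\cJ(\fa^\xi)\subset\fb$ is not needed. The correct finish, which is the one in the paper, uses the trivial inclusion $\cJ(\fa^\xi)=\Gamma(X,\cO_X(K-\lfloor\xi D\rfloor))\subset\Gamma(X,\cO_X(G+K-\lfloor\xi D\rfloor))$ coming from $G\ge 0$. Since $\fb\subset\Gamma(X,\cO_X(G+K-\lfloor\xi D\rfloor))$ while Proposition~\ref{parametrisointi} gives $\fb\nsubseteq\cJ(\fa^\xi)$, equality $\cJ(\fa^\xi)=\Gamma(X,\cO_X(G+K-\lfloor\xi D\rfloor))$ is impossible, and the inclusion is strict. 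If you replace the dubious step with this two-line observation, your argument coincides with the paper's.
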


\begin{proof}Because $\lambda(f_{\gamma},\gamma)=\xi$ for all $\gamma\in S_F$, $\xi$ is a candidate jumping number for $G$.  
Now 
$$
f_\nu \ge \xi d_\nu-k_\nu-1
$$
where the equality holds if and only if $\nu\in S_F$. In other words,
$$
f_\nu \ge \lfloor \xi d_\nu \rfloor -k_\nu
$$
if $\nu\not\in S_F$ whereas
$$
f_\nu = \lfloor \xi d_\nu \rfloor-k_\nu-1
$$
if $\nu\in S_F$.
But this says that 
$$-F \le G+K-\lfloor\xi D\rfloor$$
implying that
$$
\Gamma(X,\cO(-F)) \subset\Gamma(X,\cO_X(G+K-\lfloor\xi D\rfloor )).$$
Because $\xi=\xi_F$, it then follows from Proposition~\ref{parametrisointi} that we cannot have
$$\Gamma(X,\cO_X(G+K-\lfloor\xi D\rfloor ))=\cJ(\fa^\xi).$$
\end{proof}

The following theorem now connects our approach to that of Smith, Thompson and Tucker:

\begin{theorem}\label{kriittinenlause}
Let $\fa$ be an ideal in a two-dimensional regular local ring $R$.
Let $G$ be a divisor critically contributing a jumping number $\xi$ of $\fa$.
Then there exists an antinef divisor $F$ such that $\widehat f_\gamma=0$ unless $\gamma$ is an end not in $|G|$, 
$\lambda(a_{\gamma},\gamma)$ reaches its minimum $\xi$ exactly when $\gamma\in |G|$ and grows strictly on 
every path away from $|G|$. Moreover, 
\begin{itemize}
\item[1)] $|G|$ is a chain;
\item[2)] if $\gamma\in |G|$ has more than one adjacent vertex in $|G|$ then $\widehat d_\gamma=0$;
\item[3)] if $\gamma$ is an end of $|G|$ then $\gamma$ is a star or $\widehat d_\gamma>0$.
\end{itemize}

\end{theorem}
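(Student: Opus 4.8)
The plan is to assemble the statement from Proposition~\ref{kriittinenlemma}, the final assertion of Theorem~\ref{hyppylukukriteeri}, and Lemma~\ref{kontribuutiolemma}. First I would apply Proposition~\ref{kriittinenlemma}: since $G$ critically contributes $\xi$, there is an antinef divisor $F_0$ with $\xi=\xi_{F_0}$ and $S_{F_0}=|G|$. This places us exactly in the situation treated by the last part of Theorem~\ref{hyppylukukriteeri}.

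Next, I would feed $F_0$ into that last part, using Remark~\ref{hyppylukukriteerihuomautus} to keep control of the support. This yields a connected set $S\subseteq S_{F_0}=|G|$ together with a collection $\{a_\nu\mid d(\nu,S)\le1\}$ with $a_\gamma=f_{0,\gamma}$ for $\gamma\in S$, satisfying conditions i) and ii) of Theorem~\ref{hyppylukukriteeri}, such that $S$ is a chain (condition 1) and $\widehat d_\gamma=0$ whenever $\gamma$ is not an end of $S$ (condition 2), while condition 3) holds automatically. The antinef divisor $F$ produced from this data by Lemma~\ref{vieritys} then satisfies $\xi=\xi_F$ and $S_F=S$; moreover $\lambda(f_\nu,\nu)$ increases strictly along every path away from $S$, so its minimum $\xi$ is attained exactly on $S_F=S$, and $\widehat f_\gamma=0$ both for $\gamma\in S$ (by the equality in condition ii), which gives $\widehat f_\gamma=w_\Gamma(\gamma)a_\gamma-\sum_{\nu\sim\gamma}a_\nu=0$) and for $\gamma\notin S$ that is not an end (by property 2) of Lemma~\ref{vieritys}).

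It remains to identify $S$ with $|G|$. For this I would apply Lemma~\ref{kontribuutiolemma} to the antinef divisor $F$: the reduced divisor $G':=\sum_{\gamma\in S_F}E_\gamma=\sum_{\gamma\in S}E_\gamma$ contributes $\xi$. Since $S\subseteq|G|$ we have $G'\le G$, and because $G$ \emph{critically} contributes $\xi$, no proper subdivisor of $G$ contributes $\xi$; hence $G'=G$, i.e., $S=|G|$. Substituting this equality into the properties of $F$ collected above gives precisely the assertion, the translation of conditions 1)--3) being immediate: since $|G|=S$ is a chain, ``$\gamma$ has more than one adjacent vertex in $|G|$'' is the same as ``$\gamma$ is not an end of $S$''.

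The only step that is not pure bookkeeping is the last one, where critical contribution is used to force the chain $S$ supplied by Theorem~\ref{hyppylukukriteeri} to exhaust $|G|$; this is exactly where the converse direction furnished by Lemma~\ref{kontribuutiolemma} is needed. Everything else follows from the parametrization of jumping numbers by antinef divisors and from the ordered-tree construction already in place.
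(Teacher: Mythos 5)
Your proof is correct and follows essentially the same route as the paper: Proposition~\ref{kriittinenlemma} to get an antinef divisor with support $|G|$, the final part of Theorem~\ref{hyppylukukriteeri} together with Remark~\ref{hyppylukukriteerihuomautus} to produce a better-behaved $F$ with $S_F\subseteq|G|$, and Lemma~\ref{kontribuutiolemma} plus critical contribution to force $S_F=|G|$. The only difference is that you spell out the passage through Lemma~\ref{vieritys} and the translation of condition 2) more explicitly, but the argument is identical in substance.
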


\begin{proof}
According to Proposition~\ref{kriittinenlemma} there is an antinef divisor $F'$ such that $\lambda(f'_{\gamma},\gamma)=\xi$ exactly 
when $\gamma\in |G|$. Let $F$ be an antinef divisor as in Theorem~\ref{hyppylukukriteeri} such that  
$$S_F:=\{\gamma\in\Gamma\mid \lambda(f_{\gamma},\gamma)=\xi\}\subset \{\gamma\in\Gamma\mid \lambda(f'_{\gamma},\gamma)=\xi\}=|G|$$
(see Remark~\ref{hyppylukukriteerihuomautus}).
By Lemma~\ref{kontribuutiolemma} the divisor $\sum_{\nu\in S_F}E_\nu$ is contributing. As $G$ is critically
contributing, this implies that $S_F=|G|$, and we are done.
\end{proof}

\begin{remark}It was discovered by Tucker in~\cite[Theorem 5.1]{T} that if $G$ is a divisor critically contributing a jumping number, then 
$|G|$ satisfies conditions 1) -- 3). 

\end{remark}

Unfortunately, the existence of an antinef divisor $F$ as in Theorem~\ref{kriittinenlause} does not guarantee that the contribution is critical. 
This will come out from the following example:

\begin{example}\label{esimerkki2}Consider the configuration of exceptional divisors described by the dual graph
\newline
\unitlength=1,2mm
\begin{picture}(70,42)(-19,-21)
\put(20,0){\circle{6}}
\put(35,0){\circle{6}}
\put(35,-15){\circle{6}}
\put(35,15){\circle{6}}
\put(50,0){\circle{6}}
\put(23,0){\line(1,0){9}}
\put(35,-3){\line(0,-1){9}}
\put(35,3){\line(0,1){9}}
\put(38,0){\line(1,0){9}}
\put(20,0){\makebox(0.2,0){$1$}}
\put(35,0){\makebox(0.2,0){$5$}}
\put(35,-15){\makebox(0.2,0){$1$}}
\put(35,15){\makebox(0.2,0){$1$}}
\put(50,0){\makebox(0.2,0){$1$}}
\begin{footnotesize}
\put(39,-3.5){\makebox(0.2,0){$E_1$}}
\put(54,-3.5){\makebox(0.2,0){$E_2$}}
\put(39,11.5){\makebox(0.2,0){$E_3$}}
\put(24,-3.5){\makebox(0.2,0){$E_4$}}
\put(39,-18.5){\makebox(0.2,0){$E_5$}}
\end{footnotesize}
\end{picture}
\newline
and the proximity matrix
$$P=\hspace{-3pt}\left[\begin{array}{rrrrr}1&0&0&0&0\\^-1&1&0&0&0\\^-1&0&1&0&0\\^-1&0&0&1&0\\^-1&0&0&0&1\end{array}\right].$$
They are associated to the minimal principalization of the ideal $\fa=\fp_2^4\fp_3^4\fp_4^5\fp_5^7$. 
Now 
$$D= 20E_1 + 24E_2 + 24E_3 + 25E_4+  27E_5$$
and
$$K= E_1 + 2E_2 + 2E_3 + 2E_4+  2E_5.$$
Let us take
$$F:= 3E_1 + 3E_2 + 3E_3 + 4E_4+  5E_5= \widehat E_4+2\widehat E_5.$$
One then calculates that
$$\lambda(f_1,1)=\lambda(f_2,2)=\lambda(f_3,3)=\frac{1}{4},\lambda(f_4,4)=\frac{7}{25},\lambda(f_5,5)=\frac{8}{27}.$$
Thus $\xi_F=\frac{1}{4}$ and $S_F=\{1, 2, 3\}$. Set $G=E_1+E_2+E_3$. We now observe that the divisor $F$ satisfies 
the conditions of Theorem~\ref{kriittinenlause}. However, $G$ is not critically contributing. Indeed, set 
$$
F':= 3E_1 + 4E_2 + 3E_3 + 4E_4+  4E_5=\widehat E_2+\widehat E_4+\widehat E_5$$
and 
$$F'':= 3E_1 + 3E_2 + 4E_3 + 4E_4+  4E_5=\widehat E_3+\widehat E_4+\widehat E_5.$$
Then
$$\lambda(f'_1,1)=\frac{5}{20},\lambda(f'_2,2)=\frac{7}{24},\lambda(f'_3,3)=\frac{6}{24},\lambda(f'_4,4)=\frac{7}{25},\lambda(f'_5,5)=\frac{7}{27}$$
and
$$\lambda(f''_1,1)=\frac{5}{20},\lambda(f''_2,2)=\frac{6}{24},\lambda(f''_3,3)=\frac{7}{24},\lambda(f''_4,4)=\frac{7}{25},\lambda(f''_5,5)=\frac{7}{27}.$$
Therefore $\xi_{F'}=\xi=\xi_{F''}$ but 
$$
S_{F'}=\{1, 3\}\subsetneq S_F \supsetneq \{1,2\}=S_{F''}.
$$
By Lemma~\ref{kontribuutiolemma} both $S_{F'}$ and $S_{F''}$ contain subsets corresponding to divisors contributing $\xi$, but then the divisor $G$ cannot be  
critically contributing. Moreover, it is easy to see that both 
$E_{1}+E_{2}$ and $E_{1}+E_{3}$ critically contribute $\xi$. Note that their intersection 
$E_{1}$ does not contribute $\xi$. Observe also that in this example the set $S_F$ includes a  star vertex which is a non-end of $S_F$.

\end{example}

We next give a combinatorical criterium for the critical contribution:

\begin{theorem}\label{kriittinenkriteeri}Let $\fa$ be an ideal in a two-dimensional regular local ring $R$.
Let $G$ be a reduced subdivisor of $D$ such that $|G|$ contains at least two vertices. Then $G$ critically contributes a jumping number 
$\xi$ if and only if $|G|$ is connected and there exists a collection of nonnegative integers $\{a_\nu\in\bbN\mid d(\nu,|G|)\le1\}$ 
such that 
\begin{itemize}
\item[i)]$\lambda(a_\nu,\nu)>\xi=\lambda(a_\gamma,\gamma)$ for any $\gamma \in |G|$ and $\nu \sim |G|$; 

\item[ii)]$\displaystyle w_\Gamma(\gamma)a_{\gamma}=\sum_{\nu\sim\gamma}a_\nu$ for every $\gamma\in |G|$;

\item[iii)]$\lambda(a_\nu-1,\nu)\le\xi$ for any vertex $\nu\sim |G|$.
\end{itemize}
Moreover, it is enough that iii) holds for vertices $\nu\sim \gamma$, where $\gamma\in |G|$ is a star or $\widehat d_{\gamma}>0$.

Finally, if $G$ critically contributes $\xi$ and $F$ is any antinef divisor such that $\xi$ has support $|G|$ with respect to $F$, 
then we may take $a_\nu=f_\nu$ for all $\nu$ with $d(\nu,|G|)\le1$.

\end{theorem}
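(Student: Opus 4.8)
The plan is to reduce everything to the machinery of Theorem~\ref{hyppylukukriteeri}, Proposition~\ref{kriittinenlemma} and Lemma~\ref{kontribuutiolemma}, so that the only genuinely new content is the role of condition iii) in detecting \emph{criticality}.

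First I would handle the forward direction. Suppose $G$ critically contributes $\xi$. By Theorem~\ref{kriittinenlause} (or directly by Proposition~\ref{kriittinenlemma} together with Remark~\ref{hyppylukukriteerihuomautus}) there is an antinef divisor $F$ with $S_F=|G|$; in particular $|G|$ is connected, and setting $a_\nu:=f_\nu$ for all $\nu$ with $d(\nu,|G|)\le1$ gives a collection satisfying i) and ii) by Theorem~\ref{hyppylukukriteeri}. To get iii), fix a vertex $\nu\sim|G|$, say $\nu\sim\gamma$ with $\gamma\in|G|$, and consider the divisor $F-E_\nu$, or rather the divisor $G'$ obtained by whatever local surgery decreases $f_\nu$ by one. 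The point is that if iii) failed at $\nu$, i.e.\ $\lambda(f_\nu-1,\nu)>\xi$, then $f_\nu-1> \xi d_\nu-k_\nu-1$, so $f_\nu-1\ge\lfloor\xi d_\nu\rfloor-k_\nu$; using the computation in the proof of Lemma~\ref{kontribuutiolemma} one checks that $-(F-E_\nu)\le G+K-\lfloor\xi D\rfloor$ still holds (the inequality at $\nu$ survives, the inequalities elsewhere are unchanged). Then $\Gamma(X,\cO(-(F-E_\nu)^\sim))$ is still contained in $\Gamma(X,\cO_X(G+K-\lfloor\xi D\rfloor))$, and since this containment is strict (because $G$ contributes) one produces a proper subdivisor of $G$ that still contributes — contradicting criticality. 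This is the step I expect to be the main obstacle: one must be careful about the antinef closure $(F-E_\nu)^\sim$ and verify that subtracting $E_\nu$ does not destroy the relevant inequality $-F'\le G+K-\lfloor\xi D\rfloor$, and then extract from the resulting strict inclusion an \emph{honest} proper subdivisor $G'\lneq G$ with $\cJ(\fa^\xi)\subsetneq\Gamma(X,\cO_X(G'+K-\lfloor\xi D\rfloor))$. The reduction to stars and vertices with $\widehat d_\gamma>0$ in the last sentence of the hypothesis will follow because at a non-end, non-Rees vertex of a chain the value of $\lambda$ is forced by its neighbours (as in Lemma~\ref{ketju}a)), so iii) there is automatic once it holds at the ends.

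For the converse, assume $|G|$ is connected with a collection $\{a_\nu\}$ satisfying i), ii), iii). Conditions i) and ii) are exactly the hypotheses of Theorem~\ref{hyppylukukriteeri}, so $\xi$ is a jumping number and there is an antinef divisor $F$ with $\xi=\xi_F$, $S_F=|G|$ and $f_\nu=a_\nu$ for $d(\nu,|G|)\le1$; moreover by Lemma~\ref{kontribuutiolemma} the divisor $G=\sum_{\gamma\in|G|}E_\gamma$ contributes $\xi$. It remains to show the contribution is critical, i.e.\ that no proper $0\le G'\lneq G$ contributes. Pick $\gamma\in|G|$ with $E_\gamma\not\le G'$ and write $G'\le G-E_\gamma=:G_0$; it suffices to show $G_0$ does not contribute, i.e.\ $\cJ(\fa^\xi)=\Gamma(X,\cO_X(G_0+K-\lfloor\xi D\rfloor))$. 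Set $H:=(\lfloor\xi D\rfloor-K-G_0)^\sim$; then $\Gamma(X,\cO_X(G_0+K-\lfloor\xi D\rfloor))=\Gamma(X,\cO_X(-H))$, and the task is to prove $\xi\ne\xi_H$, equivalently that $\lambda(h_\nu,\nu)>\xi$ for \emph{every} $\nu$, via Proposition~\ref{parametrisointi}. Here is where iii) enters: one shows $H$ agrees with $F$ except that $h_\gamma=f_\gamma-1$ — indeed $\lfloor\xi D\rfloor-K-G_0$ has $\gamma$-coordinate $\lfloor\xi d_\gamma\rfloor-k_\gamma=f_\gamma-1$ at $\gamma$ and $\ge f_\nu$ off $S_F$, and antinef closure can only raise coordinates. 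Condition iii) at the neighbours $\nu\sim\gamma$ says $f_\nu-1\le\xi d_\nu-k_\nu-1<$ nothing useful directly; rather, iii) is what guarantees that when we decrease $f_\gamma$ by $1$ the proximity inequalities at the \emph{neighbours} of $\gamma$ are not yet violated in a way that would force the antinef closure to restore $f_\gamma$, so that $h_\gamma=f_\gamma-1$ genuinely holds and $\lambda(h_\gamma,\gamma)=\lambda(f_\gamma-1,\gamma)>\xi$; combined with $\lambda(h_\nu,\nu)=\lambda(f_\nu,\nu)>\xi$ for $\nu\ne\gamma$ (those coordinates are unchanged since $S_F\setminus\{\gamma\}$ has all its $\lambda$-values $>\xi$ after the surgery, by the strict-increase property 3) of Theorem~\ref{hyppylukukriteeri} applied to the chain) this gives $\min_\nu\lambda(h_\nu,\nu)>\xi$, hence $\xi\ne\xi_H$ and $G_0$ does not contribute.

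Finally, the last assertion — that when $G$ critically contributes $\xi$ one may take $a_\nu=f_\nu$ for any antinef $F$ realizing support $|G|$ — is already contained in the forward direction above, since there the collection was defined precisely by $a_\nu:=f_\nu$ and was shown to satisfy i), ii), iii); one only notes that Theorem~\ref{hyppylukukriteeri} and Remark~\ref{hyppylukukriteerihuomautus} let us start from an \emph{arbitrary} such $F$ rather than a specially constructed one, because conditions i) and ii) hold for $f_\nu$ whenever $S_F=|G|$, and the verification of iii) given above used only antinefness of $F$ and criticality of $G$, not any special feature of $F$.
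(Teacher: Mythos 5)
Both directions of your plan contain gaps, and in one place a sign error derails the argument.

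In the direction ``i)--iii) $\Rightarrow$ critical'', you compute the $\gamma$-coordinate of $\lfloor\xi D\rfloor - K - G_0$, where $G_0 = G - E_\gamma$, as $\lfloor\xi d_\gamma\rfloor - k_\gamma = f_\gamma - 1$. This is wrong: since $\gamma\in S_F$ one has $f_\gamma = \xi d_\gamma - k_\gamma - 1$ with $\xi d_\gamma\in\bbZ$, so $\lfloor\xi d_\gamma\rfloor - k_\gamma = f_\gamma + 1$; deleting $E_\gamma$ from $G$ \emph{raises} the coordinate of $\lfloor\xi D\rfloor-K-G_0$, it does not lower it. More seriously, you then assert $\lambda(h_\nu,\nu)=\lambda(f_\nu,\nu)>\xi$ for every $\nu\in|G|\setminus\{\gamma\}$ and credit property 3) of Theorem~\ref{hyppylukukriteeri}; but $\lambda(f_\nu,\nu)=\xi$ on all of $S_F=|G|$ by definition of the support. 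To get $\xi_H>\xi$ you would need the antinef closure to strictly raise $h_\nu$ above $f_\nu$ at \emph{every} vertex of $|G|\setminus\{\gamma\}$, and nothing in your outline accomplishes this -- in particular, hypothesis iii) is never actually brought to bear. The paper argues instead as follows: by Lemma~\ref{kontribuutiolemma} some $\sum_{\nu\in S}E_\nu$ with $S\subset|G|$ is critically contributing; if $S\neq|G|$, take $\eta\in|G|\setminus S$ adjacent to $\gamma\in S$, use Theorem~\ref{kriittinenlause} to obtain $F'$ with $S=S_{F'}$ and $\widehat{f'_\gamma}=0$, compare $\sum_{\nu\sim\gamma}f'_\nu=w_\Gamma(\gamma)a_\gamma=\sum_{\nu\sim\gamma}a_\nu$ to locate a $\mu\sim\gamma$ with $f'_\mu<a_\mu$ and $\mu\notin|G|$, and then iii) forces $\lambda(f'_\mu,\mu)\le\lambda(a_\mu-1,\mu)\le\xi$, hence $\lambda(f'_\mu,\mu)=\xi$, placing $\mu$ in $S\subset|G|$ -- a contradiction.

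In the direction ``critical $\Rightarrow$ iii)'', you replace $F$ by $F-E_\nu$ at a vertex $\nu\sim|G|$ where iii) purportedly fails, check $-(F-E_\nu)\le G+K-\lfloor\xi D\rfloor$, and say one ``produces a proper subdivisor of $G$ that still contributes.'' That step is unjustified, and in fact this surgery does not change any $\lambda$-value on $|G|$, so it leaves the support equal to $|G|$ and cannot by itself produce a smaller contributing divisor. The paper instead performs a two-vertex surgery on the collection rather than on the divisor: choose $\gamma'\in|G|$ with $\gamma'\sim\gamma\sim\eta$ (where $\eta$ is the vertex at which iii) fails), set $a'_\eta=a_\eta-1$, $a'_{\gamma'}=a_{\gamma'}+1$, keep all other $a'_\nu=a_\nu$; then the hypotheses of Lemma~\ref{vieritys} hold for the connected component $S$ of $|G|\setminus\{\gamma'\}$ containing $\gamma$, which is a proper subset of $|G|$, and Lemma~\ref{kontribuutiolemma} applied to the resulting antinef divisor shows $\sum_{\nu\in S}E_\nu$ contributes, contradicting criticality. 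Your remark about reducing iii) to stars and Rees vertices is correct in spirit (the condition is vacuous at neighbours of interior non-star vertices of the chain, since those have no neighbours outside $|G|$), but the main content -- producing a smaller contributing divisor from a failure of iii) -- is missing and requires a surgery of the above type.
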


\begin{proof} Suppose first that $|G|$ is connected and that there is a collection of nonnegative integers $\{a_\nu\in\bbN\mid d(\nu,|G|)\le1\}$
satisfying conditions i) -- iii). Let $F\in \Lambda$ be an antinef divisor as in Lemma~\ref{vieritys}. This means that $$S_F=\{\gamma\in |\Gamma|\mid \lambda
(f_\gamma,\gamma)=\xi\}=|G|.$$ It then follows from Lemma~\ref{kontribuutiolemma} that the divisor $G$ is contributing. For some subset $S\subset |G|$ the 
divisor $\sum_{\nu\in S}E_\nu$ is necessarily critically contributing. If $S\not=|G|$, then there must be a vertex $\eta\in |G|\smallsetminus S$ adjacent to a vertex $\gamma\in S$. By Theorem~\ref{kriittinenlause} we can find an antinef divisor $F'$ such that $\xi=\xi_{F'}$ and $S=S_{F'}$. Then $\lambda(f'_{\eta}, \eta)>\xi=\lambda(a_\eta,\eta)$ so that $f'_\eta>a_\eta$. 
Because $\widehat{f_\gamma'}=0$, we have  
$$\sum_{\nu\sim\gamma}f'_\nu=w_\Gamma(\gamma)f'_{\gamma}=w_\Gamma(\gamma)a_{\gamma}=\sum_{\nu\sim\gamma}a_\nu.$$
Thus there must be at least one vertex $\mu\sim\gamma$ for which $f'_\mu<a_\mu$. So $\lambda(f'_\mu,\mu)<\lambda(a_\mu,\mu)$.
This implies $\mu\not\in |G|$ as otherwise $\lambda(f'_\mu,\mu)<\xi$ which
is impossible. If $\gamma$ is adjacent to some vertex in $S$, then $\gamma$ must be star. If $\gamma$ is not a star, then by 
Theorem~\ref{kriittinenlause} $S=\{\gamma\}$ and $\widehat d_{\gamma}>0$. Condition iii) now implies that 
$$\lambda(f'_\mu,\mu)\le\lambda(a_\mu-1,\mu)\le \xi.$$ However, this is possible only if 
$\lambda(f'_\mu,\mu)=\xi$. But this means that $\mu\in S$, which is a contradiction. Therefore we must have 
$S=|G|$ as wanted.

Suppose then that $G$ critically contributes $\xi$. Let $\gamma\in |G|$. By Theorem~\ref{kriittinenlemma} there then exists 
an antinef divisor $F$ such that $\lambda(f_\nu,\nu)>\xi=\lambda(f_\gamma,\gamma)$ for any $\nu\notin |G|$. 
Set $a_\nu=f_\nu$ for all $\nu \in |\Gamma|$ with $d(\nu,|G|)\le1 $.  
Suppose that there would exist a vertex $\eta\sim\gamma$ such that 
$\lambda(a_\eta-1,\eta)>\lambda(a_\gamma,\gamma)$. 
We may choose a vertex $\gamma'\in |G|$ such that $\gamma'\sim \gamma$. Let $S$ be a connected component of $|G|\setminus \{\gamma'\}$ containing $\gamma$. Choose 
a collection of nonnegative integers $\{a'_\nu\in\bbN\mid d(\nu,S)\le1\}$ so that $a'_\eta=a_\eta-1$ and 
$a'_{\gamma'}=a_{\gamma'}+1$ while $a'_\nu=a_\nu$ otherwise. Clearly $\lambda(a'_\nu,\nu)>\xi$ for any $\nu\sim S$. Also 
$$w_\Gamma(\gamma)a'_{\mu}=\sum_{\nu\sim\mu}a'_\nu$$ for every $\mu\in S$. Therefore we can use Lemma~\ref{vieritys} to 
find an antinef divisor $F'$ such that $S=S_{F'}$. By Lemma~\ref{kontribuutiolemma} the divisor $\sum_{\nu\in S}E_\nu$ is then contributing.
But this is a contradiction, because $G$ is critically contributing.
\end{proof}

\begin{example}Consider the situation of Example~\ref{esimerkki2}. We now have
$$\lambda(f_5-1,5)=\frac{7}{27}>\frac{1}{4}.$$
It therefore follows from Theorem~\ref{kriittinenkriteeri} that $E_1+E_2+E_3$ does not critically 
contribute $\xi$.

\end{example}


\begin{thebibliography}{99}


\bibitem{C}
A. Campillo, G. Gonzales-Sprinberg and M. Lejeune-Jalabert, \emph{Clusters of infinitely near points},
Math. Ann. \textbf{306} (1996), 169--194.

\bibitem{ELSV}
L. Ein, R. Lazarsfeld, K. Smith and D. Varolin, \emph{Jumping coefficients of multiplier ideals}, 
Duke Math. J. \textbf{123} (2004), no. 3, 469--506.

\bibitem{FJ}
C. Favre, M. Jonsson, \emph{The valuative tree}, Lecture Notes in Mathematics \textbf{1853}, Springer-Verlag, Berlin, 2004.  

\bibitem{L}
J. Lipman, \emph{Proximity inequalities for complete ideals 
in two-dimensional regular local rings}, Contemp. Math.  
\textbf{159} (1994), 293--306.

\bibitem{LW}
J. Lipman and K. Watanabe, \emph{Integrally closed ideals in two-dimensional regular local rings
are multiplier ideals}, Math. Res. Lett. \textbf{10} (2003), no. 4, 423--434. 

\bibitem{Lo}
F. Loeser, \emph{Fonctions d'Igusa $p$-adiques et polyn\^{o}mes de Bernstein}, Amer. J.
Math. \textbf{110} (1988), 1--21.

\bibitem{J}
T. J\"arvilehto, \emph{Jumping numbers of a simple complete ideal in a two-dimensional regular local ring},
to appear in Mem. Am. Math. Soc.

\bibitem{PV}
L. Van Proeyen and W. Veys, \emph{Poles of the topological zeta function associated to an ideal in dimension two}, Math. Z. \textbf{260} (2008), no. 3, 615--627. 

  
\bibitem{ST}
K. Smith and H. Thompson, \emph{Irrelevant exceptional divisors for curves on a smooth surface},
Algebra, geometry and their interactions, Contemp. Math. \textbf{448} (2007), 245--254. 

\bibitem{T}
K. Tucker, \emph{Jumping numbers on algebraic surfaces with rational singularities}, Trans. Amer. Math. Soc. \textbf{362} (2010), no. 6, 3223-–3241.

\bibitem{V}
W. Veys, \emph{Determination of the poles of the topological zeta function for curves}, Manuscripta Math. \textbf{87} (1995), no. 4, 435--448.



\end{thebibliography}
\end{document}